\newtheorem{theorem}{\bf Theorem}[section]
\newtheorem{lemma}{\bf Lemma}[section]
\newtheorem{proposition}{\bf Proposition}[section]
\newtheorem{remark}{\bf Remark}[section]
\numberwithin{equation}{section}
\begin{document}
\title{
  Uniform boundedness and blow-up rate of solutions \\ 
  in non-scale-invariant superlinear heat equations
}
\author{
        Yohei Fujishima\footnote{e-mail address: fujishima@shizuoka.ac.jp} \\ \\
        {\small Department of Mathematical and Systems Engineering} \\
        {\small Faculty of Engineering, Shizuoka University} \\
        {\small 3--5--1 Johoku, Hamamatsu 432--8561, Japan} \\ \\
        Toru Kan\footnote{e-mail address: kan@omu.ac.jp} \\ \\
        {\small Department of Mathematics, Osaka Metropolitan University} \\
        {\small 1--1 Gakuen-cho, Sakai 599--8531, Japan}
        }
\date{}
\pagestyle{myheadings}
\markboth{Y. Fujishima and T.~Kan}{Bounds for a superlinear heat equation}

\maketitle

\begin{abstract}
  For superlinear heat equations with the Dirichlet boundary condition, 
  the $L^\infty$ estimates of radially symmetric solutions are studied. 
  In particular,  
  the uniform boundedness of global solutions 
  and the non-existence of solutions with type II blow-up are proved.
  For the space dimension greater than $9$,
  our results are shown under the condition that
  an exponent representing the growth rate of a nonlinear term
  is between the Sobolev exponent and the Joseph-Lundgren exponent.
  In the case where the space dimension is greater than $2$ and smaller than $10$,
  our results are applicable for nonlinear terms 
  growing extremely faster than the exponential function.
\end{abstract}
\noindent
{\bf Keywords}: 
Superlinear heat equation; 
Boundedness of global solutions; 
Type I blow-up; 
Similar transformation. 
\vspace{5pt}
\newline
{\bf 2020 MSC}: 35K58, Secondly; 35B44, 35B45

\ \\

\section{Introduction}
We are concerned with the behavior of solutions of the semilinear heat equation
\begin{equation}
  \label{eq:1.1}
  \left\{
    \begin{array}{ll}
      \partial_t u = \Delta u + f(u), & x\in\Omega, \,\,\, t>0, 
      \\[3pt]
      u(x,t) = k, & x\in \partial\Omega, \,\,\, t>0, 
      \\[3pt]
      u(x,0) = u_0(x), & x\in \Omega, 
    \end{array}
  \right.
\end{equation}
where $u_0 \in L^\infty(\Omega)$ is a non-negative initial function,
$k$ is a non-negative constant
and $\Omega$ is a smooth bounded domain in $\mathbb{R}^N$, $N\ge 1$.
Throughout the paper,
the nonlinear term $f$ is assumed to be of class $C^{1,\alpha}$ 
on $[0,\infty)$ for some $\alpha \in (0,1)$,
positive in $(0,\infty)$ and superlinear in the sense that
\begin{equation}
  \label{eq:1.2}
  F(u):=\int_u^\infty \frac{1}{f(\eta)}\, d\eta<\infty
  \quad \mbox{for} \quad 
  u>0.
\end{equation}
Then it is well-known that \eqref{eq:1.1} admits a unique local-in-time solution $u(x,t)$
which is positive for $x \in \Omega$, $t>0$.

One of interesting features of superlinear parabolic equations of the form \eqref{eq:1.1}
is that a solution $u$ can become unbounded in finite or infinite time.
Investigation of the behavior of the $L^\infty$-norm $\|u(\cdot,t)\|_{L^\infty(\Omega)}$
is therefore fundamental to understand
whether the solution stays bounded and how it becomes unbounded.
In this paper, 
we discuss two problems on the behavior of $\|u(\cdot,t)\|_{L^\infty(\Omega)}$.
The first problem is the uniform boundedness of global solutions:
we consider whether it is true that
\begin{equation}
  \label{eq:1.3}
  \limsup\limits_{t\to \infty} \|u(\cdot,t)\|_{L^\infty(\Omega)}<\infty
\end{equation} 
as long as $u$ exists globally.
As the second problem,
we also consider blow-up rates of solutions.
More precisely, 
we discuss whether it is true that
\begin{equation}
  \label{eq:1.4}
  \liminf\limits_{t\to T} \frac{F(\|u(\cdot,t)\|_{L^\infty(\Omega)})}{T-t}>0
\end{equation}
when $u$ blows up at some finite time $T$.
This inequality means that the blow-up rate of $u$
is similar to that of a solution of the ordinary differential equation 
\begin{equation*}
  \frac{d\varphi}{dt}=f(\varphi),
  \qquad
  \lim\limits_{t\to T} \varphi(t)=\infty,
\end{equation*}
because the solution $\varphi$ is implicitly given by $F(\varphi(t))/(T-t)=1$.
We note that, 
for the specific nonlinear term $f(u)=u^p$ (resp. $f(u)=e^u$),
\eqref{eq:1.4} is written as
\begin{gather*}
  \limsup\limits_{t\to T} \, (T-t)^{\frac{1}{p-1}} \|u(\cdot,t)\|_{L^\infty(\Omega)} <\infty
  \,\,\, 
  \left( 
    \mbox{resp.} \,\,\, 
    \limsup\limits_{t\to T} 
    \left(
      \log (T-t) +\|u(\cdot,t)\|_{L^\infty(\Omega)} 
    \right)
    <\infty
  \right).
\end{gather*}
Blow-up with this property is referred to as type I blow-up;
if the above limit supremum is infinity,
then blow-up is said to be of type II.
Therefore what we are concerned with in the second problem is 
the non-existence of solutions with type II blow-up.

\subsection{Known results and motivation}

When the nonlinear term $f(u)$ behaves similarly to $u^p$ at infinity,
the so-called Sobolev exponent $p_{\text{S}}$
and Joseph--Lundgren exponent $p_{\text{JL}}$ 
are known to determine the range of $p$ 
such that \eqref{eq:1.3} or \eqref{eq:1.4} is true.
They are given by
\begin{equation*}
  p_{\text{S}}:=
  \left\{
  \begin{aligned}
    &\infty
    &&\mbox{if} \,\,\, N \le 2,
    \\
    &\frac{N+2}{N-2}
    &&\mbox{if} \,\,\, N \ge 3,
  \end{aligned}
  \right.
  \qquad
  p_{\text{JL}}:=
  \left\{
  \begin{aligned}
    &\infty
    &&\mbox{if} \,\,\, N \le 10,
    \\
    &1+\frac{4}{N-4-2\sqrt{N-1}}
    &&\mbox{if} \,\,\, N \ge 11,
  \end{aligned}
  \right.
\end{equation*}
and satisfy $p_{\text{JL}}>p_{\text{S}}$ if $N \ge 3$.

The problems concerning \eqref{eq:1.3} and \eqref{eq:1.4} have been studied for decades.
First we recall some known results dealing with the subcritical case, namely 
\begin{equation*}
  \left\{
  \begin{aligned}
    &\limsup_{u \to \infty} \frac{f(u)}{u^p}<\infty 
    \,\,\, \mbox{for some} \,\,\, p<p_{\text{S}}
    \,\,\, \mbox{if} \,\,\, N \ge 3,
    \\
    &\limsup_{u \to \infty} \frac{f(u)}{e^{u^q}}<\infty 
    \,\,\, \mbox{for some} \,\,\, q<2
    \,\,\,\mbox{if} \,\,\, N=2.
  \end{aligned}
  \right.
\end{equation*}
Under this condition and some mild assumptions on $f$,
it was shown in \cites{CL,Fila,NST,Q} that 
\eqref{eq:1.3} holds for any global-in-time solution $u$.
The inequality \eqref{eq:1.4} was also shown to hold under the condition
\begin{equation*}
  \lim_{u \to \infty} \frac{f(u)}{u^p} \in (0,\infty)
  \quad \mbox{for some} \quad p \in (1,p_{\text{S}})
\end{equation*}
(see \cites{GK1,GMS2,Q}).
Furthermore, recently in \cites{HZ,Souplet},
it was shown that \eqref{eq:1.4} is valid for more general subcritical nonlinear terms
including $f(u)=u^p[\log (e+u)]^r$ ($p \in (1,p_{\text{S}})$, $r \in \mathbb{R}$).

Next we consider the supercritical case.
It is known that if $f$ satisfies either 
\begin{equation}
  \label{eq:1.5}
  \left\{
  \begin{aligned}
    &N \ge 3 \,\,\, \mbox{and} \,\,\,
    \lim_{u \to \infty} \frac{f(u)}{u^p} \in (0,\infty) 
    \,\,\, \mbox{for some} \,\,\, p \in (p_{\text{S}},p_{\text{JL}}),
    \,\,\, \mbox{or} \,\,\, 
    \\
    &3 \le N \le 9 \,\,\,\mbox{and} \,\,\,
    \lim_{u \to \infty} \frac{f(u)}{e^u} \in (0,\infty),
  \end{aligned}
  \right.
\end{equation}
then \eqref{eq:1.3} holds for any radially symmetric global-in-time solution $u$
(\cites{CFG,FPo,FP,GV}).
By results obtained in \cites{CdPM,dPMWZ2,DGL,GV,LT,NST,PV,Suzuki},
we see that the condition $p \in (1,p_{\text{S}}) \cup (p_{\text{S}},p_{\text{JL}})$ 
(resp. $N \le 9$) seems to be optimal for \eqref{eq:1.3}
when $f(u)=\lambda u^p$ (resp. $f(u)=\lambda e^u$), $\lambda>0$.
Indeed, it is shown that \eqref{eq:1.3} fails for some solution $u$ 
if one of the following holds:
\begin{itemize}
  \item[(i)]
    $N \ge 3$ and $f(u)=\lambda u^{p_{\text{S}}}$ (\cites{CdPM,dPMWZ2,GV,NST,Suzuki});
  \item[(ii)]
    $N \ge 11$, $k>0$, $\Omega=B_R$,
    $f(u)=\lambda u^p$ for some $p>p_{\text{JL}}$
    and $\lambda >0$ is chosen suitably (\cite{DGL});
  \item[(iii)]
    $N \ge 10$, $\Omega=B_R$,
    $f(u)=\lambda e^u$ and $\lambda>0$ is chosen suitably (\cites{DGL,LT,PV}).
\end{itemize}
Here $B_R:=\{ |x|<R\}$, $R>0$.
We remark that for the initial value problem
\begin{equation}
  \label{eq:1.6}
  \left\{
    \begin{array}{ll}
      \partial_t u = \Delta u + u^p, & x\in\mathbb{R}^N, \,\,\, t>0, 
      \\[3pt]
      u(x,0) = u_0(x), & x\in \mathbb{R}^N, 
    \end{array}
  \right.
\end{equation}
\eqref{eq:1.3} is known to fail also in the case $p=p_{\text{JL}}$ (\cite{PY}).
We furthermore remark that if one focuses on the case where $k=0$ and $f(u)=\lambda u^p$,
and considers a convex domain $\Omega$,
then \eqref{eq:1.3} is known to be true even when $p \ge p_{\text{JL}}$
(\cites{BS,CDZ,GV,Mizoguchi2,Souplet0}).

As for blow-up rates of solutions, 
it is known that \eqref{eq:1.4} is true under the condition \eqref{eq:1.5},
provided that $u$ is radially symmetric 
and that its maximum is located at the center of $\Omega=B_R$ (\cites{CFG,FP,MM}).
Moreover, there are results suggesting that
the condition $p \in (1,p_{\text{S}}) \cup (p_{\text{S}},p_{\text{JL}})$ 
is necessary for \eqref{eq:1.4} in the case $f(u)=u^p$.
Indeed, it was shown in \cites{dPMW,dPMWZ,dPMWZZ,Harada,Schweyer} that
there is a solution with type II blow-up if $f(u)=u^{p_{\text{S}}}$ and $3 \le N \le 6$,
where only the initial value problem \eqref{eq:1.6} 
is considered in \cites{dPMWZZ,Harada,Schweyer}.
In addition, 
for \eqref{eq:1.6},
the existence of a solution with type II blow-up is proved
also in the case $p \ge p_{\text{JL}}$ and $N \ge 11$
(\cite{HV,Mizoguchi,Seki}).
We remark that if a solution $u(x,t)$ is non-decreasing with respect to $t$,
then \eqref{eq:1.4} is known to be valid for general convex nonlinear teams (\cite{FM}).
We refer the reader to the book~\cite{QS}
which covers a wide range of related topics. 

The aim of this paper is to provide a general condition on $f$ 
such that \eqref{eq:1.3} and \eqref{eq:1.4} are true.
We will prove that, 
under some assumption on $f$ weaker than \eqref{eq:1.5},
both \eqref{eq:1.3} and \eqref{eq:1.4} hold 
for any radially symmetric solution of \eqref{eq:1.1}
whose maximum is located at the center of $\Omega=B_R$
(for details, see Theorems~\ref{theorem:1.1} and~\ref{theorem:1.2} below).
Our results can treat nonlinear terms such as 
$f(u)=u^p \log (e+u)$ ($p \in (p_{\text{S}},p_{\text{JL}})$)
and $f(u)=\exp (\exp (\exp (u)))$,
which cannot be handled in previous studies.
In particular, in the case $3 \le N \le 9$,
we can assert that both \eqref{eq:1.3} and \eqref{eq:1.4} are true
for nonlinear terms growing extremely faster than $e^u$.
Other concrete examples of nonlinear terms will be given in the next subsection.

\subsection{Main results}

We describe conditions to be assumed in our main results.
In this paper,
we only deal with radially symmetric solutions:
it will be assumed that
\begin{itemize}
  \item[(A1)]
    $\Omega=B_R=\{ |x|<R\}$ for some $R>0$ 
    and $u_0(x)=u_0(|x|)$.
\end{itemize}
For the nonlinear term $f$,
we will impose the following condition:
\begin{itemize}
  \item[(A2)]
    there is $f_0\in C^1([0,\infty))$ such that
    \begin{equation}
      \label{eq:1.7}
      \left\{
      \begin{aligned}
        &f_0(u)>0, \,\,\, 
        F_0(u) := \int_u^\infty \frac{1}{f_0(\eta)}\, d\eta<\infty 
        \,\,\, \mbox{for}\,\,\, u>0,
        \\
        &\mbox{the limit}\,\,\, q:=\lim_{u\to\infty} f_0'(u)F_0(u) \,\,\, \mbox{exists},
        \\
        &\lim_{u\to\infty} \frac{f(u)}{f_0(u)} = 1.
        \end{aligned}
      \right.
    \end{equation}
\end{itemize}
We note that the limit $q$ in \eqref{eq:1.7} must satisfy $q \ge 1$ (see \cite{FI}).
We also note that $q$ stands for a number 
related to the H\"{o}lder conjugate of the growth rate of $f_0$.
Indeed, if $f_0(u)=u^p$ with $p>1$ (resp. $f_0(u)=e^u$), 
then $q$ coincides with $p/(p-1)$ (resp. $1$). 
From this fact, 
we can naturally generalize the condition \eqref{eq:1.5} by means of $q$ as follows:
\begin{itemize}
  \item[(A3)]
    $N \ge 3$ and $q_{\text{JL}}<q<q_{\text{S}}$,
    or $3 \le N \le 9$ and $q=1$.
\end{itemize}
Here the exponents $q_{\text{S}}$ and $q_{\text{JL}}$ 
denote the H\"{o}lder conjugates of $p_{\text{S}}$ and $p_{\text{JL}}$, respectively:
they are given by
\begin{equation*}
  q_{\text{S}}:=
  \left\{
  \begin{aligned}
    &1
    &&\mbox{if} \,\,\, N \le 2,
    \\
    &\frac{N+2}{4}
    &&\mbox{if} \,\,\, N \ge 3,
  \end{aligned}
  \right.
  \qquad
  q_{\text{JL}}:=
  \left\{
  \begin{aligned}
    &1
    &&\mbox{if} \,\,\, N \le 10,
    \\
    &\frac{N-2\sqrt{N-1}}{4}
    &&\mbox{if} \,\,\, N \ge 11.
  \end{aligned}
  \right.
\end{equation*}
Nonlinear terms satisfying (A2) and (A3) contain various functions 
which violate \eqref{eq:1.5}.
As examples, 
let us consider functions given by
\begin{equation*}
  f_1(u)=(u^p +u e^{\sin u}) [\log (e+u)]^{r_1},
  \quad
  f_2(u)=\exp (u^{r_2}) + u^{r_3} \cos^2 u,
  \quad
  f_3(u)=\exp^n (u),
\end{equation*}
where $p>1$, $r_1 \in \mathbb{R}$, $r_2,r_3>0$, $n \in \mathbb{N}$
and $\exp^n=\underbrace{\exp \circ \cdots \circ \exp}_{n}$.
Then obviously \eqref{eq:1.5} is not satisfied for $f=f_j$, $j=1,2,3$,
unless $r_1=0$, $r_2=1$, $n=1$.
On the other hand,
if we choose $f_{0,j}$ for $j=1,2,3$ as
\begin{equation*}
  f_{0,1}(u)=u^p [\log (e+u)]^{r_1},
  \qquad
  f_{0,2}(u)=\exp (u^{r_2}),
  \qquad
  f_{0,3}(u)=f_3(u)=\exp^n (u),
\end{equation*}
then one can check that \eqref{eq:1.7} is satisfied for $f=f_j$ and $f_0=f_{0,j}$
with $q$ given by 
\begin{equation*}
  q=\left\{
  \begin{aligned}
    &\frac{p}{p-1}
    &&\mbox{if} \,\,\, f_0=f_{0,1},
    \\
    &1
    &&\mbox{if} \,\,\, f_0=f_{0,2},f_{0,3}.
  \end{aligned}
  \right.
\end{equation*}
Complicated nonlinear terms including these examples
can be handled in a unified way by our analysis.

Let us state our main results. 
First result provides the uniform boundedness of global solutions. 

\begin{theorem}
  \label{theorem:1.1}
  Assume \textup{(A1)}--\textup{(A3)}.
  If a solution $u$ of \eqref{eq:1.1} exists globally,
  then
  \begin{equation}
    \notag 
    \limsup\limits_{t\to \infty} \|u(\cdot,t)\|_{L^\infty(\Omega)} < \infty. 
  \end{equation} 
\end{theorem}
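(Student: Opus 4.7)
The proof proceeds by contradiction. Assume there is a global solution $u$ of \eqref{eq:1.1} with $\limsup_{t\to\infty}\|u(\cdot,t)\|_{L^\infty(\Omega)} = \infty$. By the radial assumption (A1) and standard zero-number and moving-plane arguments for radial parabolic problems, $u(\cdot,t)$ becomes strictly decreasing in $|x|$ for large $t$, so $M(t) := u(0,t)$ attains the supremum. Fix a sequence $t_n \to \infty$ with $M_n := M(t_n) \to \infty$.

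The key device is a similar transformation adapted to the non-scale-invariant nonlinearity $f_0$. Setting $U(x,t) := F_0(u(x,t))$, a direct computation using $F_0' = -1/f_0$ shows
\begin{equation*}
  \partial_t U = \Delta U - f_0'(u)\,|\nabla U|^2 - \frac{f(u)}{f_0(u)}.
\end{equation*}
Put $U_n := F_0(M_n) \to 0^+$ and define
\begin{equation*}
  V_n(y, s) := \frac{1}{U_n}\,U\!\left(\sqrt{U_n}\,y,\; t_n + U_n s\right),
\end{equation*}
so that $V_n$ is radial, non-negative, $V_n(0,0) = 1$, and solves
\begin{equation*}
  \partial_s V_n = \Delta_y V_n - \bigl[f_0'(u) F_0(u)\bigr]\,\frac{|\nabla_y V_n|^2}{V_n} - \frac{f(u)}{f_0(u)},
\end{equation*}
where $u = F_0^{-1}(U_n V_n)$. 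By (A2) the bracketed coefficient tends to $q$ and the last term to $1$ as $n \to \infty$, so the equation degenerates formally to a scale-invariant limit.

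Extracting a subsequential limit $V_\infty$ via parabolic compactness yields a non-negative radial entire solution of
\begin{equation*}
  \partial_s V_\infty = \Delta V_\infty - \frac{q}{V_\infty}|\nabla V_\infty|^2 - 1
\end{equation*}
on $\mathbb{R}^N \times \mathbb{R}$, conjugate via the inverse change of variable to the power heat equation $\partial_\tau w = \Delta w + w^p$ with $p = q/(q-1)$ when $q > 1$, and to the Gelfand equation $\partial_\tau w = \Delta w + e^w$ when $q = 1$. The singular radial stationary solution of the limit equation is $V^*(y) = |y|^2/[2(N-2q)]$, well-defined since $q < q_S$ forces $2q < N$. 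The contradiction is reached by intersection-comparison: the radial profiles $V_n(\cdot, s)$ have finite and non-increasing (in $s$) intersection number with $V^*$ by Sturm's theorem, so the same holds for $V_\infty(\cdot, s) - V^*$. In the range $q_{\text{JL}} < q < q_{\text{S}}$ (respectively $q = 1$ with $3 \le N \le 9$), the classical instability analysis of the singular steady state of the corresponding power or exponential equation, in the spirit of Matano--Merle and Fila--Pulkkinen, forces any positive radial entire solution of the limit equation with finite intersection number against $V^*$ to coincide with $V^*$, contradicting $V_\infty(0,0) = 1 \ne 0 = V^*(0)$.

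The main obstacle is to justify rigorously the convergence of the $V_n$-equation to its formal limit on compact subsets of $(y,s)$-space. The difficulty is that (A2) only supplies pointwise asymptotics as $u \to \infty$, whereas the coefficients in the equation for $V_n$ must be controlled uniformly at the moving points $u = F_0^{-1}(U_n V_n(y, s))$, which range over an interval depending on $V_n$ itself. This uniformity amounts to showing that $f_0$ is regularly varying in the sense of Karamata; it must be extracted elementarily from the single limit $\lim_{u\to\infty} f_0'(u) F_0(u) = q$, then combined with the perturbative control $f/f_0 \to 1$ and propagated through the rescaling. Once this is secured, the remaining intersection-comparison step transfers the standard supercritical radial theory from the scale-invariant limit equation back to the non-scale-invariant original problem, and provides the desired contradiction.
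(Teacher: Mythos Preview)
Your overall strategy---rescale, pass to a scale-invariant limit equation, and contradict via intersection comparison with the singular steady state---matches the paper's. However, two genuine gaps separate your sketch from a proof, and both are precisely the points where the paper invests its main effort.

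\textbf{First gap: the intersection-number bound.} You assert that ``$V_n(\cdot,s)$ have finite and non-increasing intersection number with $V^*$ by Sturm's theorem.'' But $V^*$ is the singular steady state of the \emph{limit} equation, not of the $V_n$-equation, whose coefficients still depend on $n$ through $u=F_0^{-1}(U_nV_n)$. Sturm's theorem requires both profiles to solve the \emph{same} equation, so you cannot compare $V_n$ directly with $V^*$. The paper resolves this by first constructing a singular radial steady state $U^*$ of the \emph{original} equation~\eqref{eq:3.1} (Proposition~\ref{proposition:3.1}); then $Z_{(0,R^*)}[U(\cdot,t)-U^*]$ is eventually constant (Lemma~\ref{lemma:5.1}), and after rescaling \emph{both} $U(\cdot,t_i)$ and $U^*$ converge, the latter to $\Phi_p^*$. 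This is how the finite bound on intersections is legitimately passed to the limit.

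\textbf{Second gap: the reduction to a steady state.} You obtain a possibly time-dependent limit $V_\infty$ and then invoke an unproved Liouville statement (``any positive radial entire solution with finite intersection number against $V^*$ coincides with $V^*$''). No such result is established in the references you cite, and your sketch offers no argument. The paper avoids this entirely by a careful choice of the sequence $t_i$: one assumes by contradiction that \emph{both} alternatives in Proposition~\ref{proposition:2.1} fail, which supplies $t_i$ along which $\sup_{s>t_i}\frac{F(M(t_i))-F(M(s))}{s-t_i}\to 0$. This extra information forces $\partial_\tau w(0,\tau)=0$ for the limit $w$, and then a unique-continuation argument (Lemma~\ref{lemma:5.2}) gives $\partial_\tau w\equiv 0$ on $\mathbb{R}^N\times(0,1)$. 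Once $w$ is stationary, Propositions~\ref{proposition:4.3}--\ref{proposition:4.4} give infinitely many intersections with $\Phi_p^*$, yielding the contradiction. Your proposal misses both the special sequence and the unique-continuation step. Relatedly, the paper only obtains convergence on the slab $\mathbb{R}^N\times(-1,1)$ (the forward time window is limited by Lemma~\ref{lemma:5.3}), not on all of $\mathbb{R}^N\times\mathbb{R}$ as you claim.
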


The second result shows the non-existence of solutions with type II blow-up.

\begin{theorem}
  \label{theorem:1.2}
  Assume that \textup{(A1)}--\textup{(A3)} hold,
  and that a solution $u$ of \eqref{eq:1.1} blows up at $t=T<\infty$.
  Assume in addition that there exists $T_0\in [0,T)$ such that 
  \begin{equation}
    \label{eq:1.8}
    \|u(\cdot,t)\|_{L^\infty(\Omega)}=u(0,t) 
    \quad\mbox{for}\quad
    t\in [T_0,T). 
  \end{equation}
  Then 
  \begin{equation} 
    \notag 
    \liminf\limits_{t\to T} \frac{F(\|u(\cdot,t)\|_{L^\infty(\Omega)})}{T-t}>0.
  \end{equation} 
\end{theorem}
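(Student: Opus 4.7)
The plan is to argue by contradiction: assume there is a sequence $t_n\uparrow T$ with $F(M(t_n))/(T-t_n)\to 0$, where $M(t):=u(0,t)$. By \eqref{eq:1.8}, $M(t)=\|u(\cdot,t)\|_{L^\infty(\Omega)}$ for $t\in[T_0,T)$, so the failure of the type-I bound occurs at the maximum itself. The strategy is to introduce a similarity transformation adapted to $f_0$ around the point $(x,t)=(0,t_n)$, extract an entire limiting solution of a model equation, and rule it out with the Liouville-type information encoded in (A3).

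With $M_n:=M(t_n)$, set the parabolic scale $\sigma_n:=1/\sqrt{f_0'(M_n)}$ and define the rescaled function by
\[
v_n(y,s) \,:=\, \frac{u(\sigma_n y,\,t_n+\sigma_n^2 s)}{M_n} \quad (q>1),
\qquad
v_n(y,s) \,:=\, u(\sigma_n y,\,t_n+\sigma_n^2 s)-M_n \quad (q=1).
\]
Using the asymptotic relations $f/f_0\to 1$ and $f_0'F_0\to q$ from (A2), the equation $\partial_t u=\Delta u+f(u)$ becomes
\[
\partial_s v_n \,=\, \Delta v_n + g(v_n) + o(1),
\]
with $g(v)=v^{q/(q-1)}$ or $g(v)=e^v$ according to the regime, the $o(1)$ error being uniform on compact sets of $(y,s)$ because on such sets $u$ stays at heights comparable to $M_n\to\infty$. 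By construction $v_n(0,0)=1$ (resp.\ $0$), which fixes a non-trivial normalization.

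Since $\sigma_n^2\asymp F(M_n)$ in both regimes, the contradiction hypothesis $F(M_n)/(T-t_n)\to 0$ forces $(T-t_n)/\sigma_n^2\to\infty$, while $(t_n-T_0)/\sigma_n^2\to\infty$ is automatic from $\sigma_n\to 0$. Hence the domain of definition of $v_n$ exhausts $\mathbb{R}^N\times\mathbb{R}$. Parabolic regularity, the radial symmetry (A1), and the monotonicity of $v_n(\cdot,s)$ in $|y|$ inherited directly from \eqref{eq:1.8} yield local $C^{2,1}$ precompactness, and a subsequence converges locally uniformly to a radial entire solution $V$ of $\partial_s V=\Delta V+g(V)$, non-increasing in $|y|$, with $V(0,0)=1$ (resp.\ $0$).

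The range in (A3), namely $q_{\mathrm{JL}}<q<q_{\mathrm{S}}$ for $N\ge 3$ or $q=1$ for $3\le N\le 9$, is exactly the one in which radial entire, radially monotone solutions of the limit problem on $\mathbb{R}^N\times\mathbb{R}$ are classified by Liouville-type theorems of the kind used in \cite{CFG,FP,MM}. This classification, combined with the spatial decay of $V$ inherited from the boundedness of $u$ on compact subsets of $\overline{\Omega}\setminus\{0\}$ bounded away from $T$, forces $V$ to be trivial and contradicts $V(0,0)=1$ (resp.\ $0$). The main obstacle is justifying the passage to the limit in the non-scale-invariant source $f(u)$: one needs a quantitative lower bound on $v_n$ on every fixed compact set so that the argument of $f$ stays in the regime where (A2) is effective, and a separate treatment of the outer region where $u$ is of order $1$, for which \eqref{eq:1.8} together with intersection-number techniques in the spirit of Matano will be central.
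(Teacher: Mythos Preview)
There are several genuine gaps. First, your rescaling for $q=1$ does not converge to the exponential model in general: take $f_0(u)=e^{u^2}$ (which satisfies (A2) with $q=1$); then $\sigma_n^2 f_0(M_n+v)=e^{2M_nv+v^2}/(2M_n)$ diverges for any fixed $v>0$, so the limit equation is not $\partial_s V=\Delta V+e^V$. This is precisely why the paper uses the $F_0$-based transformation~\eqref{eq:1.10}, which is genuinely nonlinear in $u$ (for $f_0=e^{u^2}$ one has roughly $w\approx u^2+2\log\lambda$ rather than $u-M_n$); only under that transformation do the error terms in~\eqref{eq:1.11} vanish under (A2) alone. Second, \eqref{eq:1.8} asserts only that the maximum sits at the origin; it does \emph{not} say $u(\cdot,t)$ is radially non-increasing, so the monotonicity in $|y|$ you invoke for $v_n$ is unavailable, and with it the easy route to upper bounds. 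The paper obtains the required two-sided bounds by combining the Friedman--McLeod gradient estimate (Proposition~\ref{proposition:4.2}, Lemma~\ref{lemma:5.4}) with a careful analysis (Proposition~\ref{proposition:5.1}); this is the main technical work.

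Most seriously, the Liouville step is not available: in the range $p_{\mathrm{S}}<p<p_{\mathrm{JL}}$ (resp.\ $3\le N\le 9$ for $e^V$) there is no classification theorem ruling out non-trivial radial entire solutions of $\partial_s V=\Delta V+g(V)$ on $\mathbb{R}^N\times\mathbb{R}$, and your claim that $V$ inherits spatial decay from the behaviour of $u$ away from the origin is unjustified (a fixed $|x|>0$ corresponds to $|y|\to\infty$ after rescaling, giving no information on compact sets). The paper's contradiction mechanism is different and does not pass through an entire-in-time limit. Via Proposition~\ref{proposition:2.1}, the failure of~\eqref{eq:2.1} yields a sequence along which the rescaled limit $w$ on $\mathbb{R}^N\times(-1,1)$ satisfies $w(0,\tau)\equiv w(0,0)$; a unique-continuation lemma (Lemma~\ref{lemma:5.2}) upgrades this to $\partial_\tau w\equiv 0$, so the limit is a regular radial \emph{steady state} $\Phi$ of the model equation. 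A singular steady state $U^*$ of the original equation is constructed (Proposition~\ref{proposition:3.1}), its rescaling converges to the explicit singular solution $\Phi_p^*$, and (A3) guarantees that $\Phi$ and $\Phi_p^*$ intersect infinitely often (Propositions~\ref{proposition:4.3} and~\ref{proposition:4.4}). Since the intersection number of $U(\cdot,t)$ with $U^*$ is bounded in $t$ (Lemma~\ref{lemma:5.1}), this is the contradiction. The intersection-number argument is thus the replacement for the missing Liouville theorem, not merely a tool for the outer region.
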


\begin{remark}
  \label{remark:1.1}
  $(\mathrm{i})$ 
  The assumption \eqref{eq:1.8} is satisfied if $u_0$ is non-increasing in $|x|$.
  This assumption is known to be not needed 
  in the case $f(u)=u^p$ \textup{(}see \cite{MM}\textup{)}.
  

  \smallskip 

  \noindent 
  $(\mathrm{ii})$ 
  If $f(u)=u^p [\log (e+u)]^r$ \textup{(}$p>1$, $r \in \mathbb{R}$\textup{)},
  then \eqref{eq:1.4} is written as
  \begin{equation*} 
    \limsup\limits_{t\to T} \, (T-t)^{\frac{1}{p-1}} 
    \left(
      \log\frac{1}{T-t}
    \right)^{\frac{r}{p-1}}
    \|u(\cdot,t)\|_{L^\infty(\Omega)} <\infty.
  \end{equation*} 
  This is shown in \cites{HZ,Souplet} in the subcritical case $p \in (1,p_{\text{S}})$.
  Theorem~\textup{\ref{theorem:1.2}} shows that this is still valid 
  for $p \in (p_{\text{S}},p_{\text{JL}})$,
  provided that $u$ is radially symmetric and satisfies \eqref{eq:1.8}.
\end{remark}

We mention the strategy of the proofs of Theorems~\ref{theorem:1.1} and \ref{theorem:1.2}.
Both of the theorems are proved by applying an argument employed in \cite{CFG,FP}.
At the first step,
we observe that 
the intersection number of a radially symmetric singular steady state $u^*(x)$
and a radially symmetric solution $u(x,t)$ of \eqref{eq:1.1} 
is bounded by a constant independent of $t$.
Then at the second step,
we derive bounds for a rescaled function 
of the form $w^t_\lambda(y,\tau)=H(u(\lambda y,t+\lambda^2 \tau),\lambda)$,
where $\lambda>0$ is a parameter
and $H(u,\lambda)$ is a function chosen appropriately.
Finally at the third step,
we prove \eqref{eq:1.3} and \eqref{eq:1.4} by contradiction.
Assuming that \eqref{eq:1.3} or \eqref{eq:1.4} is false,
we first show that some sequence 
$\{w^{t_i}_{\lambda_i}\}_{i=1}^\infty$ of the rescaled functions 
converges to a steady state $\phi(y)$ of a limiting equation in the whole space.
We then observe that a resclaed function corresponding to $u^*$,
namely $H(u^*(\lambda_i y),\lambda_i)$,
converges to a singular steady state $\phi^*(y)$ of the limiting equation.
If $\phi$ and $\phi^*$ intersect infinitely many times,
which is indeed true under the condition (A3),
then it can be concluded that
the intersection number of $u^*(x)$ and $u(x,t_i)$ diverges to infinity as $i \to \infty$,
contrary to the fact derived in the first step.

For the nonlinear terms $f(u)=u^p$ and $f(u)=e^u$,
there are well-known similar transformations 
such that the equation in \eqref{eq:1.1} is invariant under them.
The third step of the proofs is then successfully done 
by using such transformations
in defining the rescaled function $w^t_\lambda$. 
More precisely,
if $u$ satisfies $\partial_t u=\Delta u +u^p$ (resp. $\partial_t u=\Delta u +e^u$) 
and if $w^t_\lambda$ is defined by
\begin{equation}
  \label{eq:1.9}
  w^t_\lambda(y,\tau)
  =\lambda^{\frac{2}{p-1}} u(\lambda y,t+\lambda^2 \tau)
  \qquad
  \left(
    \mbox{resp.} \,\,\,
    w^t_\lambda(y,\tau)
    =u(\lambda y,t+\lambda^2 \tau) +2\log \lambda
  \right),
\end{equation}
then $u$ and $w^t_\lambda$ satisfy the same semilinear heat equation;
hence the limiting equation in the third step also must be the same.
The above transformations still work 
for nonlinear terms satisfying \eqref{eq:1.5},
as demonstrated in \cite{CFG,FP}.
However, the argument with these transformations would break down
if \eqref{eq:1.5} is violated.
In order to apply the argument for general nonlinear terms, 
we utilize the following quasi similar transformations instead:  
\begin{equation}
  \label{eq:1.10}
  w^t_\lambda(y,\tau) 
  =\left\{
  \begin{aligned}
    &(q-1)^{q-1} \lambda^{2(q-1)}
    F_0 \left(
      u(\lambda y,t_0+\lambda^2 \tau)
    \right)^{-(q-1)}
    &&\mbox{if} \quad q>1,
    \\
    &-\log 
    \left(
      F_0 \left(
        u(\lambda y,t_0+\lambda^2 \tau)
      \right)
    \right)
    +2\log \lambda
    &&\mbox{if} \quad q=1.
  \end{aligned}
  \right.
\end{equation}
Here $F_0$ is the function given in \eqref{eq:1.7}.
This type of transformation was introduced in \cite{Fujishima} 
to examine the blow-up set of solutions, 
and has been used to analyze non-scale-invariant equations 
(see for instance \cites{FI,Miyamoto}).
One can easily check that if $q>1$ and $f(u)=f_0(u)=u^p$ with $p=q/(q-1)$,
or if $q=1$ and $f(u)=f_0(u)=e^u$,
then the rescaled functions defined by \eqref{eq:1.10}
coincide with the ones defined by \eqref{eq:1.9}.
One can also check that by \eqref{eq:1.10},
the equation in \eqref{eq:1.1} is transformed into
\begin{equation}
  \label{eq:1.11}
  \begin{aligned}
    &\partial_\tau w^t_\lambda -\Delta_y w^t_\lambda -(w^t_\lambda)^{\frac{q}{q-1}}
    =\left(
        \frac{f(u)}{f_0(u)}-1
    \right)
    (w^t_\lambda)^{\frac{q}{q-1}} 
    +\frac{(f_0'(u)F_0(u)-q) |\nabla_y w^t_\lambda|^2}{(q-1)w^t_\lambda}
    &&\mbox{if}\,\,\, q>1,
        \\
    &\partial_\tau w^t_\lambda -\Delta_y w^t_\lambda -e^{w^t_\lambda}
    =\left(
        \frac{f(u)}{f_0(u)}-1
    \right)
    e^{w^t_\lambda} + (f_0'(u)F_0(u)-1)|\nabla_y w^t_\lambda|^2
    &&\mbox{if}\,\,\, q=1
  \end{aligned}
\end{equation}
(for the derivation of \eqref{eq:1.11},
see \cite{FI}*{Proposition~3.1} or Section~\ref{section:5}).
Although extra terms, which are due to the lack of scale invariance, 
appear on the right-hand side of \eqref{eq:1.11},
they disappear as $u$ diverges to infinity by the assumption (A2).
Therefore the limiting equation again becomes
$\partial_\tau w=\Delta_y w +w^p$ or $\partial_\tau w=\Delta_y w +e^w$,
and the third step of the proofs would be done.
In order to justify the convergence of the extra terms to $0$,
we need to derive appropriate bounds for $w^t_\lambda$ and $|\nabla_y w^t_\lambda|$.
This is one of difficulties in this study,
and will be discussed in Section~\ref{section:5}. 

This paper is organized as follows.
In Section~\ref{section:2}, 
we introduce a key proposition
from which Theorems~\ref{theorem:1.1} and \ref{theorem:1.2} follow easily.
The remaining sections discuss the proof of the key proposition.
In Section~\ref{section:3},
we construct a radially symmetric singular steady state of \eqref{eq:1.1}.
In Section~\ref{section:4}, 
we recall some known facts on a pointwise estimate for solutions of \eqref{eq:1.1}
and the number of zeros for solutions of elliptic and parabolic equations.
The proof of the key proposition is given in Section~\ref{section:5}. 

\section{Proofs of Theorems~\ref{theorem:1.1} and \ref{theorem:1.2}}
\label{section:2}

This section provides proofs of Theorems~\ref{theorem:1.1} and \ref{theorem:1.2}. 
The following proposition is the key to proving the theorems.
Its proof will be given in subsequent sections. 

\begin{proposition}
  \label{proposition:2.1}
  Assume \textup{(A1)}--\textup{(A3)} and \eqref{eq:1.8}.
  Let $u$ be a $($radially symmetric$)$ solution of \eqref{eq:1.1}
  and $T\in (0,\infty]$ its maximal existence time.
  Then one of the following holds:
  \begin{gather}
    \label{eq:2.1}
    \liminf_{t\to T} \sup_{s \in (t,T)} 
    \frac{F(\|u(\cdot,t)\|_{L^\infty(\Omega)})-F(\|u(\cdot,s)\|_{L^\infty(\Omega)})}{s-t} 
    >0 \, ;
    \\
    \label{eq:2.2}
    \limsup_{t\to T} \|u(\cdot,t)\|_{L^\infty(\Omega)}<\infty. 
  \end{gather}
\end{proposition}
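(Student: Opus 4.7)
The plan is to prove Proposition~\ref{proposition:2.1} by contradiction, following the three-step strategy described in the introduction. Suppose both \eqref{eq:2.1} and \eqref{eq:2.2} fail. Writing $g(t):=F(\|u(\cdot,t)\|_{L^\infty(\Omega)})=F(u(0,t))$ by \eqref{eq:1.8}, the negation of \eqref{eq:2.1} yields a sequence $t_i \to T$ along which $\sup_{s\in(t_i,T)}(g(t_i)-g(s))/(s-t_i)\to 0$, while the negation of \eqref{eq:2.2} provides times at which $u(0,\cdot)$ is arbitrarily large. A selection argument (for instance, passing to nearby local maxima of $u(0,\cdot)$) lets me assume both conditions hold along a single sequence $t_i\to T$ with $M_i:=u(0,t_i)\to\infty$.

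Next I apply the quasi-similar transformation \eqref{eq:1.10} with $\lambda=\lambda_i$ chosen so that $w_i:=w^{t_i}_{\lambda_i}$ satisfies $w_i(0,0)=1$; this forces $\lambda_i^2\sim F_0(M_i)\to 0$. The function $w_i$ then solves \eqref{eq:1.11}, whose right-hand side is the scale-invariant nonlinearity plus two error terms with coefficients $f/f_0-1$ and $f_0'F_0-q$, both tending to zero uniformly as $u\to\infty$ by (A2). In parallel, the radial singular steady state $u^*$ to be constructed in Section~\ref{section:3} rescales to $W^*_i(y):=H(u^*(\lambda_i y),\lambda_i)$, which converges pointwise on $\mathbb{R}^N\setminus\{0\}$ to the explicit radial singular steady state $\phi^*$ of the limiting equation $\partial_\tau w=\Delta w+w^{q/(q-1)}$ (or its exponential analog in the case $q=1$).

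The main obstacle is the second step: establishing uniform $L^\infty$ and $C^{2,1}_{\mathrm{loc}}$ bounds for $\{w_i\}$ on parabolic cylinders $\overline{B_\rho}\times[-\tau_0,\tau_0]$. The $L^\infty$ bound is expected to come from a pointwise comparison $u(\cdot,t)\leq u^*$ up to harmless perturbation, to be recalled in Section~\ref{section:4}, which after rescaling yields $w_i\leq W^*_i+o(1)$ locally uniformly away from $y=0$; this, combined with the normalization $w_i(0,0)=1$ and the radial-monotonicity property \eqref{eq:1.8}, controls $w_i$ near $y=0$. The $C^{2,1}$ bound is delicate because of the gradient-quadratic term in \eqref{eq:1.11} when $q>1$, but its coefficient is $o(1)$, so standard parabolic Schauder estimates applied to \eqref{eq:1.11} as a quasilinear equation with vanishing error should suffice. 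Extracting a convergent subsequence, I obtain a limit $W$ on $\mathbb{R}^N\times(-\tau_0,\tau_0)$ solving the limiting scale-invariant equation.

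Finally, the difference-quotient vanishing from the negation of \eqref{eq:2.1}, transported through \eqref{eq:1.10}, forces $W(0,\tau)=1=W(0,0)$ for $\tau\in[0,\tau_0)$; since $W(0,\cdot)$ is simultaneously the spatial maximum of $W(\cdot,\tau)$ by radial symmetry and \eqref{eq:1.8}, a strong maximum principle together with a Liouville-type argument identifies $W$ with a positive radial steady state $\phi$ of the limiting equation satisfying $\phi(0)=1$. Under assumption (A3), the pair $(\phi,\phi^*)$ lies in the Joseph--Lundgren oscillatory regime, so $\phi$ and $\phi^*$ intersect infinitely many times in $\mathbb{R}^N$. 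Combined with the zero-counting (Sturm-type) machinery to be recalled in Section~\ref{section:4} and the convergences $w_i(\cdot,0)\to\phi$, $W^*_i\to\phi^*$ locally uniformly, this shows that the number of intersections of $u(\cdot,t_i)$ and $u^*$ in $\Omega$ grows without bound as $i\to\infty$, contradicting the uniform intersection-number bound from the first step. This contradiction proves the proposition.
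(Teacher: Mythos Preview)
Your overall architecture matches the paper's proof: argue by contradiction, rescale via \eqref{eq:1.10}, pass to a limit solving the scale-invariant equation, identify the limit as a steady state, and contradict the uniform intersection bound with the singular steady state. The final step---infinite oscillation of $\phi$ around $\phi^*$ under (A3) versus the bounded zero count of $U(\cdot,t)-U^*$---is exactly what the paper does.

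However, your ``second step'' contains a real gap. You write that the $L^\infty$ bound on $w_i$ ``is expected to come from a pointwise comparison $u(\cdot,t)\le u^*$ up to harmless perturbation.'' No such comparison is available: the whole point of the intersection-counting argument is that $u(\cdot,t)$ and $u^*$ cross finitely many times, not that one lies below the other. The paper never compares $u$ with $u^*$ to get bounds. Instead, the upper bound on the rescaled solution (Proposition~\ref{proposition:5.1}) is obtained from the Friedman--McLeod gradient estimate (Proposition~\ref{proposition:4.2}), which yields $\tfrac12|\nabla u|^2\le\int_u^{M(t)}f$; this is integrated along rays to produce an explicit pointwise upper bound on $F(u(x,s))/\lambda(s)^2$ (see \eqref{eq:5.27}--\eqref{eq:5.30}), and the same estimate gives the gradient bound \eqref{eq:5.31}. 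This is nontrivial analysis---separate treatments are needed for $q=1$ and $q>1$, and in the latter case a Harnack-type lower bound (via \eqref{eq:5.35}) is required first. Without this, you have neither the $L^\infty$ bound nor the gradient bound needed to control the quadratic term in \eqref{eq:1.11}, so the compactness step fails.

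A smaller but related point: you propose a ``selection argument'' to arrange $u(0,t_i)\to\infty$ along your sequence. The paper instead shows that the negation of \eqref{eq:2.2} together with $\partial_t u(0,t)\neq 0$ (Lemma~\ref{lemma:5.1}) forces $u(0,t)\to\infty$ as $t\to T$ for \emph{all} $t$, which is what Proposition~\ref{proposition:5.1} actually assumes in \eqref{eq:5.6}. Your selection would not obviously give uniform-in-$t$ bounds near $T$.
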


For the proof of Theorem~\ref{theorem:1.1},
we use a monotonicity property of radially symmetric solutions of \eqref{eq:1.1} 
stated in \cite[Corollary~$2$]{NS}. 

\begin{proposition}[\cite{NS}]
  \label{proposition:2.2}
  Assume \textup{(A1)} and 
  write $u(x,t)=U(|x|,t)$ for a radially symmetric solution of \eqref{eq:1.1}.
  If $u$ exists globally in time,
  then there exists $T_0>0$ such that 
  $\partial_r U(r,t)<0$ for $(r,t) \in (0,R] \times [T_0,\infty)$.
  In particular, 
  \eqref{eq:1.8} holds for $T=\infty$.
\end{proposition}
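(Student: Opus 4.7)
The plan is to analyze $v(r,t) := \partial_r U(r,t)$. Differentiating the radially reduced equation $U_t = U_{rr} + \frac{N-1}{r} U_r + f(U)$ with respect to $r$ yields the linear parabolic equation $v_t = v_{rr} + \frac{N-1}{r} v_r - \frac{N-1}{r^2} v + f'(U)\, v$, whose coefficients are bounded on any subcylinder where $U$ stays bounded. Radial symmetry forces $v(0,t)=0$ for all $t>0$, and parabolic regularity of the linear equation makes $v(\cdot,t)$ real-analytic in $r$ for every $t>0$.

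I would then apply the Sturmian (Angenent) zero-number theorem to $v(\cdot,t)$ on $(0,R]$. Real-analyticity in $r$ guarantees that the count $z(t) := \#\{r \in (0,R] : v(r,t) = 0\}$ is finite for each $t>0$, and the zero-drop lemma gives that $z(t)$ is non-increasing in $t$ and strictly drops at every multiple zero. Consequently $z(t)$ stabilizes to some $N_0 \in \mathbb{Z}_{\ge 0}$ as $t \to \infty$, and for all sufficiently large $t$ every zero of $v(\cdot, t)$ in $(0,R]$ is simple.

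To conclude I would rule out $N_0 \ge 1$. Along a sequence $t_n \to \infty$, interior parabolic estimates together with the gradient structure of \eqref{eq:1.1}---with Lyapunov functional $E(u) = \tfrac{1}{2}\int_\Omega |\nabla u|^2 \, dx - \int_\Omega \tilde F(u)\, dx$ where $\tilde F' = f$---force any subsequential limit $W_\infty$ of $\{U(\cdot,t_n)\}$ to be a radial stationary solution on $B_R$ with boundary value $k$. Since $f>0$ on $(0,\infty)$, any such positive equilibrium is superharmonic, hence $W_\infty \ge k$ in $B_R$, and the Gidas-Ni-Nirenberg moving-plane argument applied to $W_\infty - k$ shows $\partial_r W_\infty < 0$ on $(0,R]$. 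By continuous dependence, the simple zeros $r_j(t_n)$ of $v(\cdot,t_n)$ would accumulate at zeros of $\partial_r W_\infty$ in $(0,R]$, a contradiction. Hence $N_0 = 0$ and the final sign of $v$ on $(0,R]$ is negative.

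The main obstacle is the possibility that $U$ becomes unbounded along the chosen sequence $t_n$, since then the orbit is not precompact and the Lyapunov reasoning above breaks. To handle this case I would employ the rescaling strategy developed later in the paper: apply a quasi-similarity transformation of the form \eqref{eq:1.10} so that the rescaled profile has uniformly controlled $L^\infty$-norm, then pass to a limit $\phi$ solving the canonical scale-invariant equation $\partial_\tau w = \Delta w + w^{q/(q-1)}$ (or $\partial_\tau w = \Delta w + e^w$ when $q=1$). The same monotonicity rigidity for positive radial equilibria of the limit equation, pulled back through the rescaling, again prevents persistence of interior zeros of $\partial_r U(\cdot, t_n)$ and closes the argument in the unbounded regime.
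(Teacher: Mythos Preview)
The paper does not prove this proposition; it is quoted directly from Ni--Sacks \cite{NS}. Your zero-number setup for $v=\partial_r U$ is the right starting point and matches the approach in \cite{NS}, but the way you dispose of the case $N_0\ge 1$ has a genuine gap.

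Your $\omega$-limit argument for the bounded case is fine in itself, but you have no independent access to boundedness: that is exactly Theorem~\ref{theorem:1.1}, which the paper proves \emph{using} Proposition~\ref{proposition:2.2}. Your proposed remedy for the unbounded case---the quasi-similarity rescaling of Section~\ref{section:5}---is circular: the bounds in Proposition~\ref{proposition:5.1} are derived under the hypothesis \eqref{eq:1.8}, which is precisely the conclusion you are trying to establish here. (Indeed, if $N_0\ge 1$ is odd then $U_r>0$ near $r=0$, so the spatial maximum is not at the origin and \eqref{eq:1.8} fails outright, making the machinery of Section~\ref{section:5} unavailable.) The rescaling also requires (A2), whereas Proposition~\ref{proposition:2.2} is stated under (A1) alone, so even absent the circularity you would only be proving a strictly weaker statement. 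The argument in \cite{NS} avoids any appeal to boundedness or to the later sections of this paper.
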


Before proceeding to the proofs of Theorems~\ref{theorem:1.1} and \ref{theorem:1.2}, 
we show the following lemma.

\begin{lemma}
  \label{lemma:2.1}
  Let $T$ be the maximal existence time of a solution $u$ of \eqref{eq:1.1}. 
  If \eqref{eq:2.1} holds,
  then 
  \begin{equation}
    \notag
    T<\infty
    \qquad\mbox{and}\qquad
    \liminf\limits_{t\to T} \frac{F(\|u(\cdot,t)\|_{L^\infty(\Omega)})}{T-t}>0.
  \end{equation}
\end{lemma}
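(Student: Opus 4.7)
The plan is to work with $g(t) := F(\|u(\cdot,t)\|_{L^\infty(\Omega)})$ and reformulate \eqref{eq:2.1} as follows: there exist $c>0$ and $t_1 \in (0,T)$ such that for every $t \in [t_1,T)$ there is some $s \in (t,T)$ with $g(s) < g(t) - c(s-t)$. Introducing $h(\tau) := g(\tau) + c\tau$, this becomes the statement that $h$ is continuous on $[t_1,T)$ and has the property that for every $\tau \in [t_1,T)$ one can find $s \in (\tau,T)$ with $h(s) < h(\tau)$. (Continuity of $h$ uses the standard continuity in $\tau$ of $\|u(\cdot,\tau)\|_{L^\infty(\Omega)}$ for a classical solution together with $F \in C^0((0,\infty))$ and the positivity $\|u(\cdot,\tau)\|_{L^\infty(\Omega)} > 0$ for $\tau > 0$.)

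The crux is to establish
\[
  \inf_{\tau \in [t_1,T)} h(\tau) \;=\; \liminf_{\tau \to T} h(\tau).
\]
Only the inequality $\geq$ requires argument. If it failed, I would pick $\lambda$ strictly between these two quantities and consider the nonempty set $A := \{\tau \in [t_1,T) : h(\tau) \leq \lambda\}$. Since $h(\tau) > \lambda$ for $\tau$ sufficiently close to $T$, the supremum $\tau_* := \sup A$ lies in $[t_1,T)$; by continuity of $h$, $\tau_* \in A$, and then the defining property of $h$ yields $s \in (\tau_*,T)$ with $h(s) < h(\tau_*) \leq \lambda$, contradicting maximality of $\tau_*$. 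This ``no interior infimum'' step is the one I expect to need the most care.

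With the identity in hand, $g \geq 0$ gives $\liminf_{\tau \to T} h(\tau) \geq cT$, hence $\inf_{[t_1,T)} h \geq cT$. If $T = \infty$ this would force $\inf h = \infty$, contradicting the finiteness of $h(t_1) = g(t_1) + ct_1$; therefore $T < \infty$. Finally, for every $t \in [t_1,T)$ one has $h(t) \geq \inf h \geq cT$, which rearranges to $g(t) \geq c(T-t)$, so that
\[
  \liminf_{t \to T} \frac{F(\|u(\cdot,t)\|_{L^\infty(\Omega)})}{T-t} \;\geq\; c \;>\; 0,
\]
completing the proof.
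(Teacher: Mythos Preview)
Your proof is correct and follows essentially the same route as the paper's: both extract from \eqref{eq:2.1} the property that $h(\tau)=g(\tau)+c\tau$ can have no interior minimum on $[t_1,T)$, and use this together with $g\ge 0$ to conclude $g(t)\ge c(T-t)$. The paper organizes the same contradiction pointwise (fixing $t$ and showing $\sup\{s\in[t,T):h(s)\le h(t)\}=T$), whereas you phrase it globally as $\inf_{[t_1,T)}h=\liminf_{\tau\to T}h$; the underlying mechanism is identical.
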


\begin{proof}
  Put $\phi(t) :=F(\| u(\cdot,t) \|_{L^\infty(\Omega)})$.
  By the assumption \eqref{eq:2.1}, 
  we can take $c>0$ and $t_* \in (0,T)$ such that 
  \begin{equation}
    \label{eq:2.3}
    \sup_{s \in (t,T)} 
    \frac{\phi(t)-\phi(s)}{s-t}>c
    \quad\mbox{for}\quad
    t\in [t_*,T).
  \end{equation}
  It is then sufficient to prove that
  \begin{equation}
    \label{eq:2.4}
    \phi(t) \ge c(T-t)
    \quad\mbox{for}\quad
    t\in [t_*,T).
  \end{equation}  
  
  For $t \in [t_*,T)$,
  we set 
  \begin{equation*}
    A:=\{ s \in [t,T): \, \phi (t)-\phi(s) \ge c(s-t) \}.
  \end{equation*}
  Then $A$ is not empty, 
  since $t$ is clearly contained in $A$. 
  Hence we can define $\overline{T}$ by
  \begin{equation*}
    \overline{T}:=\sup A \in [t,T].
  \end{equation*}
  By definition,
  there is a sequence $\{t_i\} \subset [t,\overline{T})$ such that 
  \begin{equation}
    \label{eq:2.5}
    \lim_{i \to \infty} t_i=\overline{T},
    \qquad
    \phi (t)-\phi(t_i) \ge c(t_i-t).
  \end{equation}
  Now let us show that 
  \begin{equation}
    \label{eq:2.6}
    \overline{T}=T.
  \end{equation}
  On the contrary, 
  suppose that $\overline{T}<T$.
  Then by \eqref{eq:2.5} we have
  \begin{equation*}
    \phi (t)-\phi(\overline{T}) \ge c(\overline{T}-t).
  \end{equation*}
  Using \eqref{eq:2.3} with $t=\overline{T}$,
  we also have
  \begin{equation*}
    \phi (\overline{T})-\phi(s_0) \ge c(s_0-\overline{T})
    \quad \mbox{for some} \quad
    s_0 \in (\overline{T},T).
  \end{equation*}
  Combining the above inequalities,
  we deduce that
  \begin{equation*}
    \phi (t)-\phi(s_0) \ge c(s_0-t).
  \end{equation*}
  This shows that $s_0$ is an element of $A$ which exceeds $\overline{T}=\sup A$,
  a contradiction.
  Therefore \eqref{eq:2.6} is verified.
  
  From \eqref{eq:2.5}, \eqref{eq:2.6} and the fact that $\phi>0$,
  we obtain
  \begin{equation*}
    \phi (t) 
    \ge \limsup_{i\to\infty} \left\{ \phi (t_i) +c(t_i-t) \right\}
    \ge \lim_{i\to\infty} c(t_i-t)
    =c(\overline{T}-t)
    =c(T-t).
  \end{equation*}
  We have thus derived \eqref{eq:2.4},
  and the lemma follows. 
\end{proof}

\begin{remark}
  \label{remark:2.1}
  The same assertions in Lemma~\textup{\ref{lemma:2.1}} hold
  if \eqref{eq:2.1} is replaced with the condition
  \begin{equation}
    \label{eq:2.7}
    \liminf_{t\to T} \frac{M'(t)}{f(M(t))}> 0,
  \end{equation}
  where $M(t)=\|u(\cdot,t)\|_{L^\infty(\Omega)}$.
  Indeed, this condition leads to \eqref{eq:2.1},
  since
  \begin{equation*}
    \sup_{s \in (t,T)} 
    \frac{F(M(t))-F(M(s))}{s-t}
    \ge
    \lim_{s \to t} 
    \frac{F(M(t))-F(M(s))}{s-t}
    =-\frac{d}{dt} F(M(t))
    =\frac{M'(t)}{f(M(t))}.
  \end{equation*}
  Although the condition \eqref{eq:2.7} is often considered in previous studies,
  we adopt \eqref{eq:2.1},
  because an estimate of the derivative with respect $t$ is required for \eqref{eq:2.7},
  while it is not for \eqref{eq:2.1}.
  This is crucial in the analysis of the transformed equation \eqref{eq:1.11}.
  In a regularity theory for parabolic equations,
  bounds of the H\"{o}lder norms of lower order terms are needed
  to obtain a pointwise estimate for the derivative of a solution with respect the time variable.
  However, obtaining such bounds in \eqref{eq:1.11} requires not only H\"{o}lder estimates
  for the gradient term involved on the right-hand side of \eqref{eq:1.11},
  but also additional assumptions stronger than \textup{(A2)}. 
  By considering \eqref{eq:2.1} instead of \eqref{eq:2.7},
  these difficulties can be avoided.
  \end{remark}

Let us now prove the theorems. 

\begin{proof}[Proof of Theorem~\ref{theorem:1.1}]
  Let $u$ be a global solution of \eqref{eq:1.1}. 
  Then Proposition~\ref{proposition:2.1} can be applied,
  because Proposition~\ref{proposition:2.2} gives $\|u(\cdot,t)\|_{L^\infty(\Omega)}=u(0,t)$ for large $t$.
  Since it is assumed that $u$ exists globally, 
  we see from the contraposition of  Lemma~\ref{lemma:2.1} that \eqref{eq:2.1} fails. 
  Therefore \eqref{eq:2.2}, which is our desired conclusion, is true. 
\end{proof}

\begin{proof}[Proof of Theorem~\ref{theorem:1.2}]
  Let $u$ be a solution of \eqref{eq:1.1} which blows up at $t=T<\infty$. 
  By Proposition~\ref{proposition:2.1},  
  we see that \eqref{eq:2.1} or \eqref{eq:2.2} holds.
  The condition \eqref{eq:2.2}, however, is not satisfied since $u$ blows up at $t=T$,
  and hence \eqref{eq:2.1} must be true.
  The desired conclusion is thus obtained by applying Lemma~\ref{lemma:2.1}. 
\end{proof}

The remaining sections are devoted to the proof of Proposition~\ref{proposition:2.1}.

\section{Existence of a singular steady state}
\label{section:3}

Proposition~\ref{proposition:2.1} is proved 
by investigating the intersection number of a singular steady state and a solution of \eqref{eq:1.1}.
As a preliminary step,
in this section we construct a radially symmetric singular steady state;
this is done by finding a singular solution of the equation
\begin{equation}
  \label{eq:3.1}
  U''+\frac{N-1}{r}U'+f(U)=0.
\end{equation}
It is well-known that if $f(u)=u^p$ and $p>N/(N-2)$ (resp. $f(u)=e^u$),
this equation has an explicit singular solution $\Phi^*_p$ (resp. $\Phi^*_\infty$) given by 
\begin{equation}
  \label{eq:3.2}
  \Phi_p^*(r) := 
  \left[
    (p-1)\left(
      2N-\frac{4p}{p-1}
    \right)^{-1}
  r^2
  \right]^{-1/(p-1)}
  \quad 
  \left( 
  \mbox{resp.}\,\,\, \Phi_\infty^*(r) =-\log \frac{r^2}{2N-4}
  \right).
\end{equation}
We note that $\Phi_p^*$ is written as $\Phi_p^*(r)=F^{-1}((2N-4q)^{-1} r^2)$,
where $q=p/(p-1)$ if $f(u)=u^p$ and $q=1$ if $f(u)=e^u$. 
The goal of this section is to show that for $f$ satisfying (A2),
the equation \eqref{eq:3.1} admits a singular solution 
behaving similarly to $F^{-1}((2N-4q)^{-1} r^2)$ as $r \to 0$.

\begin{proposition}
  \label{proposition:3.1}
  Let $N\ge 3$ and assume that \textup{(A2)} holds with $q<q_{\mathrm{S}}$. 
  Then there is $r_0>0$ such that the problem~\eqref{eq:3.1} possesses a solution 
  $U^*\in C^2(0,r_0)$ which is singular at the origin and is of the form 
  \begin{equation}
    \label{eq:3.3}
    U^*(r) = F_0^{-1}
    \left(
      (2N-4q)^{-1} r^2 (1+\theta(r))
    \right), 
  \end{equation}
  where $\theta\in C^2(0,r_0)$ is a function satisfying $\theta(r)\to 0$ as $r\to 0$. 
\end{proposition}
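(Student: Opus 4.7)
My plan is to reduce the problem to constructing a small remainder $\theta$ that satisfies a perturbed Cauchy--Euler equation, and then to apply a Banach fixed-point argument in a suitably weighted space. Substituting $U=F_0^{-1}(\psi)$ into \eqref{eq:3.1} and using $U'=-f_0(U)\psi'$ together with the analogous formula for $U''$, I obtain
\begin{equation*}
  \psi''+\frac{N-1}{r}\psi'=f_0'(U)(\psi')^2+\frac{f(U)}{f_0(U)}.
\end{equation*}
By (A2) the coefficients $f_0'(U)F_0(U)$ and $f(U)/f_0(U)$ tend to $q$ and $1$ as $U\to\infty$, so this equation formally limits to $\psi''+(N-1)\psi'/r=q(\psi')^2/\psi+1$, which admits the exact solution $\psi=cr^2$ with $c=(2N-4q)^{-1}$. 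Note that $q<q_{\mathrm{S}}=(N+2)/4<N/2$ for $N\ge 3$ guarantees $c>0$.

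I then set $\psi=cr^2(1+\theta)$ and rearrange into
\begin{equation*}
  r^2\theta''+(N+3-4q)r\theta'+(2N-4q)\theta = G(r,\theta,\theta'),
\end{equation*}
where $G$ collects the nonlinear term $qr^2(\theta')^2/(1+\theta)$ together with two error contributions proportional to $\tilde g(\psi):=f_0'(F_0^{-1}(\psi))\psi-q$ and $\tilde h(\psi):=f(F_0^{-1}(\psi))/f_0(F_0^{-1}(\psi))-1$, both of which vanish as $\psi\to 0$ by (A2). The indicial equation $\alpha^2+(N+2-4q)\alpha+(2N-4q)=0$ has sum of roots $4q-N-2<0$ and product $2N-4q>0$ under $q<q_{\mathrm{S}}$, so both roots have strictly negative real parts. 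Switching to $t=-\log r$ and $\Theta(t)=\theta(e^{-t})$ turns the linear part into the autonomous operator $\Theta''-(N+2-4q)\Theta'+(2N-4q)\Theta$, whose characteristic roots have positive real parts, so the $\Theta\to 0$ regime at $t=\infty$ lies on the (trivial) stable manifold.

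A solution $\Theta$ with $\Theta(t)\to 0$ as $t\to\infty$ is constructed by the standard variation-of-parameters integral equation whose integration limits $t$ and $+\infty$ eliminate the two unstable modes, expressing $\Theta$ as an integral operator applied to $\widetilde G(s,\Theta(s),\Theta'(s))$ (the function $G$ written in the $t$-variable). On the Banach space of $C^1$ functions on $[t_0,\infty)$ bounded by a small multiple of the non-increasing weight $\omega(t):=\sup_{s\ge t}\bigl(|\tilde g(ce^{-2s})|+|\tilde h(ce^{-2s})|\bigr)$, I verify that this integral operator is a contraction once $t_0$ is large enough. The unique fixed point yields $\theta(r)=\Theta(-\log r)\to 0$ as $r\to 0$, standard ODE bootstrapping upgrades regularity to $C^2$, and $U^*:=F_0^{-1}(cr^2(1+\theta))$ is the desired singular solution.

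The main obstacle is the circularity produced by (A2) supplying only pointwise decay of $\tilde g$ and $\tilde h$ with no quantitative rate: the size of the forcing $\widetilde G$ depends on $\Theta$ through $\psi=ce^{-2t}(1+\Theta)$, so the contraction constant depends on the a priori bound on $\Theta$ which itself is extracted from $\widetilde G$. Using the weight $\omega$ above and choosing $t_0$ large (equivalently $r_0$ small) resolves this, but requires a careful local Lipschitz control of $\tilde g$ and $\tilde h$ in a neighborhood of $\psi=0$ so that the dependence on $\Theta$ can be absorbed into the contraction constant.
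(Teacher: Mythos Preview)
Your reduction is essentially the paper's: writing $\psi=cr^2(1+\theta)$ is equivalent to the paper's $F_0(U)=(2N-4q)^{-1}e^{2s-X}$ via $1+\theta=e^{-X}$ and $t=-s$, and the indicial analysis is the same. The issue is the fixed-point step. A Banach contraction requires you to estimate
\[
\bigl|\tilde g\bigl(ce^{-2t}(1+\Theta_1)\bigr)-\tilde g\bigl(ce^{-2t}(1+\Theta_2)\bigr)\bigr|
\quad\text{and}\quad
\bigl|\tilde h\bigl(ce^{-2t}(1+\Theta_1)\bigr)-\tilde h\bigl(ce^{-2t}(1+\Theta_2)\bigr)\bigr|
\]
by a small constant times $\|\Theta_1-\Theta_2\|$, i.e.\ you need local Lipschitz control of $\tilde g,\tilde h$ near $\psi=0$. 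But (A2) gives only $f_0\in C^1$, so $\tilde g(\psi)=f_0'(F_0^{-1}(\psi))\psi-q$ is merely continuous; no Lipschitz bound on $f_0'(u)F_0(u)$ (or on $f/f_0$) as $u\to\infty$ is assumed or implied. Your closing sentence correctly names this requirement, but it is not available under the stated hypothesis, so the contraction argument as written does not close.

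The paper sidesteps exactly this difficulty by replacing the contraction with the Schauder fixed-point theorem: it shows the integral operator maps a closed convex set $\mathcal{X}_{a,m}\subset BC^1((-\infty,a])$ into a set $\mathcal{Y}_{a,b,m}$ that is relatively compact in $\mathcal{X}_{a,m}$ (using the extra $C^2$ bound coming from the equation and the decay $m(s)\to 0$), and only continuity of $h_1,h_2$ is needed. If you want to salvage your argument under (A2) alone, the cleanest fix is to keep your change of variables and weighted space but switch the endgame to a Schauder/compactness argument rather than a contraction.
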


The same result was proved in \cite{Miyamoto} under the assumptions that
\begin{equation*}
  f \in C^2(u_0,\infty) 
  \quad\mbox{for some}\quad 
  u_0>0,
  \qquad
  \frac{f(u)f''(u)}{f'(u)^2} \to \frac{1}{q}
  \quad\mbox{as}\quad
  u \to \infty.
\end{equation*}
If these are satisfied,
then \eqref{eq:1.7} holds for $f_0=f$,
because L'Hospital's rule gives
\begin{equation*}
  \lim_{u \to \infty} \frac{1}{f'(u)F(u)}
  =\lim_{u \to \infty} \frac{(1/f'(u))'}{F'(u)}
  =\lim_{u \to \infty} \frac{-f''(u)/f'(u)^2}{-1/f(u)}
  =\lim_{u \to \infty} \frac{f(u)f''(u)}{f'(u)^2}
  =\frac{1}{q}.
\end{equation*}
Our claim in Proposition~\ref{proposition:3.1} is that
the existence of a singular solution $U^*$ is guaranteed 
under the weaker assumption (A2).

We set up notation for function spaces to be used in the proof of Proposition~\ref{proposition:3.1}.
For $k \in \mathbb{N}$ and an interval $I \subset \mathbb{R}$,
let $BC^k(I)$ denote the set of all $C^k$ functions on $I$ whose derivatives up to $k$ are bounded.
This set is a Banach space with the norm defined by
\begin{equation*}
  \| X\|_{BC^k(I)}:=\sup_{s \in I} |X(s)| +\sum_{j=1}^k \sup_{s \in I} 
  \left|
  \frac{d^k X}{ds^k}(s)
  \right|.
\end{equation*}
For $a \in \mathbb{R}$, $b>0$ 
and a non-negative bounded function $m=m(s)$ defined on $(-\infty,a]$,
we write
\begin{gather*}
  \mathcal{X}_{a,m}:=
  \left\{
  X \in BC^1((-\infty,a]): \, |X(s)|+|X'(s)| \le m(s)
  \right\},
  \\
  \mathcal{Y}_{a,b,m}:=
  \left\{
  X \in BC^2((-\infty,a]): \, |X(s)|+|X'(s)| \le m(s), \, |X''(s)| \le b
  \right\}.
\end{gather*}
We think of $\mathcal{X}_{a,m}$ as a metric space 
with the metric induced by the norm $\| \cdot \|_{BC^1((-\infty,a])}$.

To prove Proposition~\ref{proposition:3.1},
we examine properties of the function spaces defined above.
\begin{lemma}
  \label{lemma:3.1}
  The following assertions hold:
  \begin{itemize}
    \item[$\mathrm{(i)}$] $\mathcal{X}_{a,m}$ is a closed convex subset of $BC^1((-\infty,a])$; 
    \item[$\mathrm{(ii)}$] if $\lim\limits_{s \to -\infty} m(s)=0$,
    then $\mathcal{Y}_{a,b,m}$ is relatively compact in $\mathcal{X}_{a,m}$.
  \end{itemize}  
\end{lemma}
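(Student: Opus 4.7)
\medskip

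\noindent\textbf{Proof proposal.} Part (i) is routine: for closedness I would take a sequence $X_n \in \mathcal{X}_{a,m}$ converging to $X$ in $BC^1((-\infty,a])$, observe that $BC^1$-convergence forces pointwise convergence of $X_n$ and $X_n'$ at every $s$, and pass to the limit in $|X_n(s)|+|X_n'(s)|\le m(s)$. For convexity, if $X_1,X_2\in \mathcal{X}_{a,m}$ and $t\in[0,1]$, the triangle inequality gives
\begin{equation*}
|tX_1(s)+(1-t)X_2(s)|+|tX_1'(s)+(1-t)X_2'(s)| \le t(|X_1(s)|+|X_1'(s)|) + (1-t)(|X_2(s)|+|X_2'(s)|) \le m(s).
\end{equation*}

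For (ii), the plan is a two-scale Arzel\`a--Ascoli argument. Let $\{X_n\}\subset \mathcal{Y}_{a,b,m}$. The bound $|X_n''|\le b$ makes $\{X_n'\}$ uniformly Lipschitz, hence equicontinuous, and the bound $|X_n'|\le m(s)\le \|m\|_\infty$ makes $\{X_n\}$ equicontinuous as well; both families are uniformly bounded. On each interval $[-K,a]$, Arzel\`a--Ascoli then yields a subsequence converging in $C^0([-K,a])$ together with its derivatives (apply the theorem to $X_n$ and to $X_n'$, using the uniform Lipschitz bound on $X_n'$). A standard diagonal extraction over $K\in\mathbb{N}$ produces a single subsequence $\{X_{n_k}\}$ converging in $C^1_{\mathrm{loc}}((-\infty,a])$ to some limit $X$.

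The main obstacle, and the only non-routine point, is upgrading local $C^1$-convergence to convergence in the global norm $\|\cdot\|_{BC^1((-\infty,a])}$; this is where the hypothesis $m(s)\to 0$ as $s\to-\infty$ enters. Given $\varepsilon>0$, I would fix $K>0$ with $m(s)<\varepsilon/4$ for $s\le -K$. Since both $X_{n_k}$ and $X_{n_k}'$ are bounded in modulus by $m(s)$, the tail estimate
\begin{equation*}
\sup_{s\le -K}\bigl(|X_{n_k}(s)-X_{n_j}(s)|+|X_{n_k}'(s)-X_{n_j}'(s)|\bigr) \le 4 \sup_{s\le -K} m(s) < \varepsilon
\end{equation*}
holds for every $k,j$, while on the compact piece $[-K,a]$ the Cauchy property in $BC^1([-K,a])$ follows from the $C^1_{\mathrm{loc}}$ convergence. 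Hence $\{X_{n_k}\}$ is Cauchy in the Banach space $BC^1((-\infty,a])$ and converges there to $X$. Passing to the pointwise limit in $|X_{n_k}(s)|+|X_{n_k}'(s)|\le m(s)$ shows $X\in\mathcal{X}_{a,m}$, which proves the relative compactness claim.
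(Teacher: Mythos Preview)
Your proof is correct and follows essentially the same approach as the paper's: both use Arzel\`a--Ascoli (the paper phrases it as the compact embedding $BC^2 \hookrightarrow BC^1$ on bounded intervals) together with a diagonal argument to get $C^1_{\mathrm{loc}}$ convergence, and then exploit $m(s)\to 0$ to control the tail and upgrade to global $BC^1$ convergence. The only cosmetic difference is that you verify the Cauchy property of the subsequence, whereas the paper estimates $\|X_{n_j}-X\|_{BC^1}$ directly against the already-identified limit $X$.
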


\begin{proof}
  The proof of (i) is straightforward.
  We only give a proof of (ii).

  Suppose that $\lim\limits_{s \to -\infty} m(s)=0$,
  and let $\{ X_n\}$ be any sequence in $\mathcal{Y}_{a,b,m}$.
  Since $\mathcal{X}_{a,m}$ is closed in $BC^1((-\infty,a])$,
  the assertion (ii) follows if we show that 
  $\{ X_n\}$ contains a convergent subsequence in $BC^1((-\infty,a])$.
  For each bounded closed interval $I$ in $(-\infty,a]$,
  there is a subsequence of $\{ X_n\}$ which is convergent in $BC^1(I)$,
  because the definition of $\mathcal{Y}_{a,b,m}$ shows that 
  $\{ X_n\}$ is bounded in $BC^2(I)$ which is compactly embedded in $BC^1(I)$.
  From this fact and a diagonal argument, 
  we can take a subsequence $\{ X_{n_j}\}$ of $\{ X_n\}$ and $X \in C^1((-\infty,a])$ such that
  \begin{equation}
    \label{eq:3.4}
    \|X_{n_j}-X\|_{BC^1([s_0,a])} \to 0
    \quad\mbox{for any}\,\,\, s_0 \in (-\infty,a) \,\,\, \mbox{as} \,\,\, j \to \infty.
  \end{equation}
  We then have $X \in \mathcal{X}_{a,m}$,
  since letting $j \to \infty$ in the inequality $|X_{n_j}(s)|+|X_{n_j}'(s)| \le m(s)$
  gives $|X(s)|+|X'(s)| \le m(s)$.
  
  For $s_0 \in (-\infty,a]$,
  the norm of $X_{n_j}-X$ is estimated as
  \begin{align*}
    \|X_{n_j}-X\|_{BC^1((-\infty,a])} 
    &\le \|X_{n_j}-X\|_{BC^1([s_0,a])} +\|X_{n_j}-X\|_{BC^1((-\infty,s_0])}
    \\
    &\le \|X_{n_j}-X\|_{BC^1([s_0,a])} +2\sup_{s \in (-\infty,s_0]} m(s).
  \end{align*}
  Hence by \eqref{eq:3.4} we find that 
  \begin{equation*}
    \limsup\limits_{j \to \infty} \|X_{n_j}-X\|_{BC^1((-\infty,a])} 
    \le 2\sup_{s \in (-\infty,s_0]} m(s).
  \end{equation*}
  Since the right-hand side converges to $0$ as $s_0 \to -\infty$,
  we conclude that 
  \begin{equation*}
    X_{n_j} \to X
    \quad\mbox{in} \,\,\, BC^1((-\infty,a]) \,\,\, \mbox{as} \,\,\, j \to \infty.
  \end{equation*}
  Thus the lemma follows.
\end{proof}

We are ready to prove Proposition~\ref{proposition:3.1}.

\begin{proof}[Proof of Proposition~\ref{proposition:3.1}]
  First we consider the change of variables
  \begin{equation*}
    U(r)=F_0^{-1}
    \left(
    (2N-4q)^{-1} e^{2s-X(s)}
    \right),
    \qquad
    s=\log r,
  \end{equation*}
  and transform \eqref{eq:3.1} into an equation for $X(s)$.
  By definition,
  \begin{equation}
    \label{eq:3.5}
    (2N-4q)F_0(U(r))= e^{2s-X(s)}.
  \end{equation}
  Differentiating this with respect to $r$,
  we have
  \begin{equation}
    \label{eq:3.6}
    (2N-4q)\frac{U'(r)}{f_0(U(r))}
    =e^{s-X(s)}(X'(s)-2).
  \end{equation}
  Squaring the both sides of this equality and using \eqref{eq:3.5}, 
  we also have
  \begin{equation}
    \label{eq:3.7}
    (2N-4q)\frac{U'(r)^2}{f_0(U(r))^2}
    =F_0(U(r)) e^{-X(s)}(X'(s)-2)^2.
  \end{equation}
  We further differentiate \eqref{eq:3.6} to obtain
  \begin{equation*}
    (2N-4q)
    \left(
    \frac{U''(r)}{f_0(U(r))} -\frac{f_0'(U(r))U'(r)^2}{f_0(U(r))^2}
    \right)
    =e^{-X(s)} 
    \left\{ 
    X''(s) -(X'(s)-2)(X'(s)-1)
    \right\}.
  \end{equation*}
  This together with \eqref{eq:3.7} shows that
  \begin{multline}
   \label{eq:3.8}
   (2N-4q) \frac{U''(r)}{f_0(U(r))} 
   \\
   =e^{-X(s)} 
   \left\{ 
   X''(s) -(X'(s)-2)(X'(s)-1) +f_0'(U(r)) F_0(U(r)) (X'(s)-2)^2
   \right\}.
  \end{multline}
  Plugging \eqref{eq:3.6} and \eqref{eq:3.8} into \eqref{eq:3.1},
  we arrive at
  \begin{equation}
   \label{eq:3.9}
    X'' -(X'-2)(X'-N) 
    +f_0'(\zeta(X,s)) F_0(\zeta(X,s)) (X'-2)^2
    +(2N-4q) \frac{f(\zeta(X,s))}{f_0(\zeta(X,s))} e^{X}=0,
  \end{equation}
  where
  \begin{equation*}
    \zeta(X,s):=F_0^{-1}
    \left(
    (2N-4q)^{-1} e^{2s-X}
    \right).
  \end{equation*}  
  If we construct a solution of \eqref{eq:3.9} satisfying 
  $X\in C^2((-\infty,a])$ for some $a\in \mathbb{R}$ and
  \begin{equation}
    \label{eq:3.10} 
    \lim_{s \to -\infty} X(s)=0,
  \end{equation}
  then a singular solution of the form \eqref{eq:3.3}
  is obtained by setting $\theta (r)=e^{-X(s)}-1$, $s=\log r$.
  
  Let us derive an integral equation corresponding to \eqref{eq:3.9}.
  Notice that \eqref{eq:3.9} is written as
  \begin{equation}
   \label{eq:3.11}
    X'' +(N+2-4q)X' +(2N-4q) X +h_1(X,X') +h_2(X,X',s)=0,
  \end{equation}  
  where
  \begin{gather*}
    h_1(X,Y):=(2N-4q)(e^X-1-X) +(q-1)Y^2,
    \\
    h_2(X,Y,s):=
    (2N-4q) 
    \left( 
    \frac{f(\zeta(X,s))}{f_0(\zeta(X,s))}-1
    \right) 
    e^{X} 
    +(f_0'(\zeta(X,s)) F_0(\zeta(X,s))-q) (Y-2)^2.
  \end{gather*} 
  By the assumption $q<q_{\text{S}}$,
  we see that the real part of every root of the quadratic polynomial
  $\lambda^2+(N+2-4q)\lambda+2N-4q$ is negative.
  This shows that the solution $Z$ of the initial value problem 
  \begin{equation}
    \label{eq:3.12}
    \left\{
    \begin{aligned}
      &Z''(s)+(N+2-4q)Z'(s)+(2N-4q)Z(s)=0,
      \quad s \in \mathbb{R},
      \\
      &Z(0)=0,\quad Z'(0)=1, 
    \end{aligned}
    \right.
  \end{equation}
  satisfies
  \begin{equation}
    \label{eq:3.13}
    |Z(s)|+|Z'(s)|+|Z''(s)| \le \beta e^{-\alpha s}
    \quad\mbox{for}\quad s \ge 0,
  \end{equation}
  where $\alpha$ and $\beta$ are positive constants.  
  Keeping this fact in mind,
  we convert \eqref{eq:3.11} into the integral equation
  \begin{equation}
    \label{eq:3.14}
    X(s)=\Psi[X](s):=-\int_{-\infty}^s 
    \left(
    h_1(X(\eta),X'(\eta)) +h_2(X(\eta),X'(\eta),\eta) 
    \right) 
    Z(s-\eta) \, d\eta.
  \end{equation}
  By \eqref{eq:3.12} and \eqref{eq:3.13},
  one can easily verify that if $X$ satisfies 
  $X \in BC^1((-\infty,\tilde a])$ and \eqref{eq:3.14} for some $\tilde a \in \mathbb{R}$,
  then $X$ is of class $C^2$ and satisfies \eqref{eq:3.11} on $(-\infty,\tilde a]$.
  The task is thus to find a fixed point $X$ of $\Psi$ satisfying \eqref{eq:3.10}.

  We set up a function space where a fixed point will be obtained.
  Observe that $h_1(X,Y)=O(X^2+Y^2)$ as $X,Y \to 0$.
  Hence there is $\delta_0>0$ such that
  \begin{equation}
    \label{eq:3.15}
    |h_1(X,Y)| \le \frac{\alpha}{4\beta} (|X|+|Y|)
    \quad\mbox{if}\quad |X|+|Y| \le \delta_0,
  \end{equation}
  where $\alpha$ and $\beta$ are the constants given in \eqref{eq:3.13}.  
  Since $F_0^{-1}(\eta)\to \infty$ as $\eta \to 0$,
  we see that $\zeta(X,s) \to \infty$ 
  locally uniformly for $X \in \mathbb{R}$ as $s \to -\infty$.
  This with \eqref{eq:1.7} shows that $h_2(X,Y,s) \to 0$ 
  locally uniformly for $(X,Y) \in \mathbb{R}^2$ as $s \to -\infty$.
  Hence
  \begin{equation}
    \label{eq:3.16}
    m(s):=\frac{4\beta}{\alpha}
    \sup \{ |h_2(X,Y,\tilde s)| :\, |X|+|Y| \le \delta_0, \, \tilde s \in (-\infty,s] \} \to 0
    \quad\mbox{as}\quad s \to -\infty.
  \end{equation}    
  In particular, there is $a \in (-\infty,0]$ such that
  \begin{equation}
    \label{eq:3.17}
    m(s) \le \delta_0
    \quad\mbox{for}\quad s \in (-\infty,a].
  \end{equation}
  For $a$ and $m$ defined in this way,
  we will show that $\Psi$ has a fixed point in $\mathcal{X}_{a,m}$
  by applying the Schauder fixed point theorem.
  
  Let $X \in \mathcal{X}_{a,m}$.
  Then we particularly have $|X|+|X'| \le \delta_0$ 
  on $(-\infty,a]$ by \eqref{eq:3.17}.
  Hence \eqref{eq:3.15} and the definition of $m$ show that 
  if $\eta \le s \le a$, then
  \begin{gather*}
    |h_1(X(\eta),X'(\eta))| \le \frac{\alpha}{4\beta} (|X(\eta)|+|X'(\eta)|) 
    \le \frac{\alpha}{4\beta} m(\eta) \le \frac{\alpha}{4\beta} m(s),
    \\
    |h_2(X(\eta),X'(\eta),\eta)| 
    \le \frac{\alpha}{4\beta} m(\eta) \le \frac{\alpha}{4\beta} m(s).
  \end{gather*}
  These together with \eqref{eq:3.13} yield
  \begin{equation}
    \label{eq:3.18}
    |\Psi[X](s)| 
    \le \int_{-\infty}^s 
    \left( 
    \frac{\alpha}{4\beta}m(s) +\frac{\alpha}{4\beta}m(s) 
    \right)
    \beta e^{-\alpha (s-\eta)} \, d\eta
    =\frac{1}{2} m(s).
  \end{equation}
  In a similar way,
  we deduce that
  \begin{equation}
    \label{eq:3.19}
    |(\Psi[X])'(s)|
    =\left|
    \int_{-\infty}^s 
    \left(
    h_1(X(\eta),X'(\eta)) +h_2(X(\eta),X'(\eta),\eta) 
    \right)
    Z'(s-\eta) \, d\eta
    \right|
    \le \frac{1}{2} m(s)
  \end{equation}
  and
  \begin{equation}
    \label{eq:3.20}
    \begin{aligned}
      |(\Psi[X])''(s)|
      &=\bigg|
      h_1(X(s),X'(s)) +h_2(X(s),X'(s),s) 
      \\
      &\qquad
      +\int_{-\infty}^s 
      \left(
      h_1(X(\eta),X'(\eta)) +h_2(X(\eta),X'(\eta),\eta) 
      \right)
      Z''(s-\eta) \, d\eta
      \bigg|
      \\
      &\le \frac{1}{2} \left( \frac{\alpha}{\beta} +1 \right) m(s)
      \le \frac{1}{2} \left( \frac{\alpha}{\beta} +1 \right) \delta_0=:b.
    \end{aligned}
  \end{equation}
  Combining \eqref{eq:3.18}--\eqref{eq:3.20},
  we infer that $\Psi$ is a mapping from $\mathcal{X}_{a,m}$ to $\mathcal{Y}_{a,b,m}$.
  From Lemma~\ref{lemma:3.1}, 
  we conclude that $\Psi$ is a continuous mapping 
  on the convex closed set $\mathcal{X}_{a,m}$ of $BC^1((-\infty,\tilde a])$
  the image of which is contained in the compact subset $\mathcal{Y}_{a,b,m}$ of $\mathcal{X}_{a,m}$.
  Therefore, by the Schauder fixed point theore,
  we obtain a fixed point $X$ of $\Psi$ in $\mathcal{X}_{a,m}$.
  The fixed point $X$ satisfies \eqref{eq:3.10}, 
  because \eqref{eq:3.16} and the fact that $X \in \mathcal{X}_{a,m}$ yield 
  $|X(s)| \le m(s) \to 0$ as $s \to -\infty$.
  Furthermore, since $X=\Psi(X)\in \mathcal{Y}_{a,b,m}$, 
  we also have $X\in C^2((-\infty,a])$.
  We have thus obtained the desired singular solution of \eqref{eq:3.1},
  and the proof is complete.
\end{proof}

\section{Known facts}
\label{section:4}

In this section, we collect some known facts to be used later.
More precisely, we recall properties of zeros of solutions for linear parabolic equations, 
gradient and H\"{o}lder estimates for semilinear parabolic equations,
and the intersection number of steady states of \eqref{eq:1.1}.


\subsection{Properties of the number of zeros}

We first introduce a known result providing some properties of the number of zeros
for radially symmetric solutions of linear parabolic equations.
To state the result precisely,
we define 
\begin{equation}
  \notag 
  Z_I[U] := \# \left\{
    r\in I:\, U(r) = 0
  \right\} \in \mathbb{N} \cup \{0\} \cup \{\infty\}
\end{equation}
for an interval $I\subset \mathbb{R}$ 
and a continuous function $U$ on $I$.
The following proposition is shown in \cite{CP}. 

\begin{proposition}[\cite{CP}]
  \label{proposition:4.1}
  Let $u(x,t) = U(|x|,t)$ be a non-trivial radially symmetric solution of the equation
  \begin{equation}
    \notag 
    \partial_t u = \Delta u + a(|x|,t)u, 
    \qquad 
    x\in B_R, \,\,\, t\in (t_1,t_2), 
  \end{equation}
  where $R>0$, $-\infty\le t_1<t_2\le \infty$ 
  and $a(r,t)$ is continuous function on $[0,R]\times (t_1,t_2)$. 
  Assume that
  \begin{equation*}
    U(R,t)\neq 0 \,\,\, \mbox{for all} \,\,\, t\in (t_1,t_2).
  \end{equation*}
  Then $Z_{[0,R)}[U(\cdot,t)]$ is finite for any $t\in (t_1,t_2)$. 
\end{proposition}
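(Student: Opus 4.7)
The plan is to reduce to the classical Sturm--Angenent zero-number theorem for one-dimensional linear parabolic equations. Writing $r = |x|$, the radial profile $U(r,t)$ satisfies
\begin{equation*}
\partial_t U = U_{rr} + \frac{N-1}{r} U_r + a(r,t) U, \qquad r \in (0, R),\ t \in (t_1, t_2),
\end{equation*}
equipped with the Neumann-type condition $U_r(0,t) = 0$ coming from the $C^2$ regularity of the radial function $u$ at the origin, together with the nondegeneracy $U(R, t) \neq 0$ at the right endpoint.

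Fix $t^* \in (t_1, t_2)$. The first step is to bound the zero count away from the singular point. For any $\epsilon \in (0, R)$ the drift $(N-1)/r$ is smooth and bounded on $[\epsilon, R]$, and $a(r,t)$ is continuous on $[\epsilon, R] \times K$ for some compact time-neighborhood $K$ of $t^*$. Angenent's theorem then applies on this cylinder and yields $Z_{[\epsilon, R]}[U(\cdot, t^*)] < \infty$; combined with $U(R, t^*) \neq 0$, this gives $Z_{[\epsilon, R)}[U(\cdot, t^*)] < \infty$ for every such $\epsilon$.

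The main obstacle is to rule out accumulation of zeros as $r \to 0^+$. My plan here is to perform the change of variables $s = \log r$, together with a suitable exponential rescaling of $U$, recasting the equation as a linear parabolic problem on $(-\infty, \log R)$ with bounded coefficients. Zeros of $U(\cdot, t^*)$ clustering at $r = 0$ correspond to zeros of the transformed profile escaping to $s = -\infty$. The Neumann condition $U_r(0,t) = 0$ translates into a decay condition for the transformed solution at $s = -\infty$, and via a Sturm comparison with explicit bounded oscillatory solutions (or a Phragm\'en--Lindel\"of-type extension of Angenent's theorem to half-lines), one bounds the zero count in a neighborhood of $-\infty$. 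This step, which is the technical heart of the proof and the core content of the Chen--Pol\'a\v{c}ik argument, exploits both the evenness in $r$ of the radial profile and the specific form of the singular drift $(N-1)/r$. Combined with the previous paragraph, it yields the desired finiteness of $Z_{[0, R)}[U(\cdot, t^*)]$.
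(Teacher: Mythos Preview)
The paper does not supply its own proof of this proposition: it is stated as a citation from Chen--Pol\'a\v{c}ik \cite{CP} and used as a black box, so there is no argument in the paper to compare your attempt against.

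As to your sketch itself, the overall strategy (reduce to a one-dimensional linear parabolic equation and invoke Sturm--Angenent zero-number theory, with special care at the singular endpoint $r=0$) is indeed the right one and is in the spirit of \cite{CP}. However, the crucial step---ruling out accumulation of zeros at $r=0$---is not actually carried out. The change of variables $s=\log r$ alone does \emph{not} produce a parabolic equation with bounded coefficients: the diffusion coefficient becomes $e^{-2s}$, which blows up as $s\to-\infty$, so a straightforward application of Angenent's theorem on the half-line is not available. The argument in \cite{CP} handles the origin by a more delicate regularization specific to the radial Laplacian together with the boundary condition $U_r(0,t)=0$; your proposal gestures at this (``Sturm comparison with explicit bounded oscillatory solutions'' or a ``Phragm\'en--Lindel\"of-type extension'') but does not supply it. If you want a self-contained proof rather than a citation, that step needs to be made precise.
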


\subsection{Gradient estimate}


Next we recall a gradient estimate for a solution of semilinear heat equations 
obtained in \cite[Theorem~3.1]{FM}. 

\begin{proposition}[\cite{FM}]
  \label{proposition:4.2}
  Let $h$ be a non-negative $C^1$ function on $[0,\infty)$,
  $\Omega$ a ball in $\mathbb{R}^N$
  and $t_1$, $t_2$ real numbers with $t_1<t_2$. 
  Suppose that a function $u=u(x,t)$ 
  is non-negative and smooth on $\overline{\Omega} \times [t_1,t_2]$ 
  and satisfies  
\begin{equation*}
  \left\{
    \begin{aligned}
      &\partial_t u = \Delta u + h(u), 
      && x\in\Omega, \,\,\, t \in (t_1,t_2), 
      \\
      &u(x,t) = 0, 
      && x\in \partial\Omega, \,\,\, t \in (t_1,t_2),
      \\
      &\frac{1}{2} |\nabla u(x,t_1)|^2 \le \int_{u(x,t_1)}^L h(\eta)\, d\eta, 
      && x\in \Omega,
    \end{aligned}
  \right.
\end{equation*}
  where 
\begin{equation*}
L:= \max_{(x,t)\in\overline{\Omega} \times [t_1,t_2]}u(x,t).
\end{equation*}
  Then
  \begin{equation*}
    \frac{1}{2} |\nabla u(x,t)|^2 \le \int_{u(x,t)}^L h(\eta)\, d\eta
    \quad\mbox{for}\quad
    (x,t) \in \overline{\Omega} \times [t_1,t_2].
  \end{equation*}
\end{proposition}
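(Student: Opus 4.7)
The plan is to establish the gradient estimate via a parabolic maximum principle applied to the auxiliary function
\begin{equation*}
J(x,t) := \tfrac{1}{2}|\nabla u(x,t)|^2 - G(u(x,t)),
\qquad
G(u) := \int_{u}^{L} h(\eta)\,d\eta,
\end{equation*}
so that $G'(u) = -h(u)$ and $G \ge 0$. The goal is to show $J \le 0$ on $\overline{\Omega} \times [t_1,t_2]$. The assumption at $t = t_1$ is exactly $J(\cdot,t_1) \le 0$, so it suffices to show that no positive maximum of $J$ can develop for $t > t_1$.

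First I would compute the evolution equation for $J$ using $u_t - \Delta u = h(u)$ and the Bochner-type identity $\Delta(\tfrac12 |\nabla u|^2) = \nabla u \cdot \nabla \Delta u + |D^2 u|^2$. A direct calculation gives
\begin{equation*}
\partial_t J - \Delta J = h(u)^2 - |D^2 u|^2.
\end{equation*}
At any interior point where $\nabla J = 0$, by rotating coordinates so that $\nabla u$ points along $e_1$, the condition $\nabla J = 0$ becomes $u_1 u_{1j} = h(u) u_j$, which forces $u_{11} = h(u)$ (when $u_1 \neq 0$), and hence $|D^2 u|^2 \ge u_{11}^2 = h(u)^2$. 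Consequently, at such a point $\partial_t J - \Delta J \le 0$.

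To run the maximum principle on the parabolic cylinder, I would use the perturbation $J_\varepsilon := J - \varepsilon t$ and argue by contradiction, supposing $\max J_\varepsilon > 0$. At an interior maximum, the critical point relation above gives $\partial_t J_\varepsilon - \Delta J_\varepsilon \le -\varepsilon < 0$, contradicting the sign of $\partial_t J_\varepsilon - \Delta J_\varepsilon$ implied by the maximum. The main obstacle is the boundary $\partial\Omega \times [t_1,t_2]$, where the condition $\nabla J = 0$ is not available. Here I would exploit the fact that $\Omega = B_R$: on $\partial\Omega$, the Dirichlet condition $u = 0$ implies $u_t = 0$, and the tangential contribution to $\Delta u$ vanishes, so the equation reduces to $u_{\nu\nu} + \tfrac{N-1}{R} u_\nu = -h(0)$. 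Since $\nabla u = u_\nu \nu$ on $\partial\Omega$, a direct computation yields
\begin{equation*}
J_\nu = u_\nu u_{\nu\nu} + h(0)u_\nu = -\tfrac{N-1}{R}\,u_\nu^{\,2} \le 0.
\end{equation*}
At a putative boundary maximum of $J_\varepsilon$, the outward derivative $(J_\varepsilon)_\nu$ must be $\ge 0$, so $u_\nu = 0$, which forces $|\nabla u| = 0$ and thus $J = -G(0) \le 0$, contradicting $J_\varepsilon > 0$. Sending $\varepsilon \to 0$ yields $J \le 0$ everywhere, which is the desired inequality.

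The delicate step is the boundary computation: showing that the geometry of the ball forces the "right" sign on $J_\nu$. This is where the hypothesis that $\Omega$ is a ball enters crucially, since the boundary mean curvature term $\tfrac{N-1}{R}$ must combine with the Dirichlet reduction of the equation to cancel the $h(0)$ contribution in $J_\nu$; without this cancellation, the maximum principle argument would break at the boundary.
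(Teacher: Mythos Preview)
The paper does not prove Proposition~4.2 at all: it is stated as a known result and attributed to \cite{FM} (Friedman--McLeod, \emph{Indiana Univ.\ Math.\ J.}~1985, Theorem~3.1). So there is no ``paper's own proof'' to compare against; your task was essentially to reproduce the Friedman--McLeod argument, and you have done so.

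Your outline is correct and is exactly the classical approach. A couple of minor corrections: the critical-point relation reads $\sum_k u_k u_{jk} + h(u)u_j = 0$, so with $\nabla u = u_1 e_1$ one gets $u_{11} = -h(u)$ (not $+h(u)$); this is harmless since only $u_{11}^2$ is used. Also, to ensure the positive maximum of $J_\varepsilon$ is not attained at $t=t_1$, it is cleaner to set $J_\varepsilon = J - \varepsilon(t-t_1)$ rather than $J - \varepsilon t$; then $J_\varepsilon(\cdot,t_1)=J(\cdot,t_1)\le 0$ directly. Finally, at a spatial critical point where $\nabla u = 0$ one has $J = -G(u) \le 0$, so a positive interior maximum necessarily has $\nabla u \neq 0$; you use this implicitly and could state it explicitly. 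With these cosmetic fixes the argument is complete and matches the proof in \cite{FM}.
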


\subsection{The intersection number of radially symmetric steady states}

Finally we introduce results for the intersection number of radially symmetric steady states
in the specific cases $f(u)=u^p$ and $f(u)=e^u$. 

First we consider the case $f(u)=u^p$.
In this case, 
radially symmetric steady states of \eqref{eq:1.1} are given by solutions of the equation
\begin{equation}
  \label{eq:4.1}
  \Phi''+\frac{N-1}{r}\Phi'+\Phi^p = 0,
  \qquad 
  r>0.
\end{equation}
As mentioned in the previous section,
this equation has the explicit singular solution $\Phi^*_p$ given by \eqref{eq:3.2},
provided $p>N/(N-2)$.
It was shown in \cite[Proposition~3.7]{Wang} (see also \cite{JL,Miyamoto}) that if $p$ is in some range,
$\Phi^*_p$ and any positive regular solution of \eqref{eq:4.1} intersect infinitely many times.
\begin{proposition}[\cite{Wang}]
  \label{proposition:4.3}
  Assume $N\ge 3$ and $p_{\mathrm{S}}<p<p_{\mathrm{JL}}$. 
  Let $\Phi$ be a solution of \eqref{eq:4.1} satisfying 
  $\Phi(0)=\alpha$ and $\Phi'(0)=0$ for some $\alpha>0$. 
  Then $\Phi$ is positive on $(0,\infty)$ and $Z_{(0,\infty)}[\Phi-\Phi_p^*]=\infty$. 
\end{proposition}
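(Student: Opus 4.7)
The plan is to invoke the classical Emden--Fowler similarity transformation to reduce the radial ODE~\eqref{eq:4.1} to an autonomous second-order equation, and then to analyze the phase portrait near the equilibrium corresponding to $\Phi_p^*$. I would first dispose of the positivity claim by contradiction: if $\Phi$ had a first zero at some $r_0\in(0,\infty)$, then $u(x):=\Phi(|x|)$ would be a positive solution of $-\Delta u=u^p$ on the ball $B_{r_0}$ with Dirichlet boundary data, which is ruled out by the Pohozaev identity on star-shaped domains whenever $p\ge p_{\text{S}}$.

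Next I would set $w(s):=r^{2/(p-1)}\Phi(r)$ with $s:=\log r$, which turns \eqref{eq:4.1} into the autonomous equation
\begin{equation*}
  w'' + A w' - B w + w^p = 0,\qquad
  A := N-2-\tfrac{4}{p-1},\qquad
  B := \tfrac{2}{p-1}\bigl(N-2-\tfrac{2}{p-1}\bigr).
\end{equation*}
The hypothesis $p>p_{\text{S}}$ makes $A,B>0$, and $L:=B^{1/(p-1)}$ is the unique positive equilibrium; a short computation confirms $\Phi_p^*(r)=L\,r^{-2/(p-1)}$, so zeros of $w(s)-L$ correspond bijectively to intersections of $\Phi$ with $\Phi_p^*$. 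The Lyapunov functional
\begin{equation*}
  E(s):=\tfrac{1}{2}w'(s)^2-\tfrac{B}{2}w(s)^2+\tfrac{1}{p+1}w(s)^{p+1}
\end{equation*}
satisfies $E'(s)=-Aw'(s)^2\le 0$. Regularity at $r=0$ forces $w(s),w'(s)\to 0$ as $s\to-\infty$, so $E(-\infty)=0$; since $\Phi$ is nontrivial, $E$ strictly decreases on some interval, and one checks that $E(L,0)<0$. LaSalle's invariance principle applied to the bounded autonomous flow then forces $(w(s),w'(s))\to(L,0)$ as $s\to\infty$, since the only equilibrium compatible with $E(+\infty)<0$ is $(L,0)$.

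The final step is to establish that this convergence is oscillatory. Linearizing at $(L,0)$ yields the characteristic equation $\lambda^2+A\lambda+(p-1)B=0$ whose discriminant, rewritten as a quadratic in $m:=2/(p-1)$, is negative precisely when $m$ lies strictly between $\tfrac{N-4\pm 2\sqrt{N-1}}{2}$. The standing assumption $p_{\text{S}}<p<p_{\text{JL}}$ places $m$ in this range, so $(L,0)$ is a stable focus for the linearization. Upgrading this to the nonlinear flow---either by invoking Hartman--Grobman, or by comparing $w-L$ directly with solutions of the linearized equation via a Gronwall argument once the trajectory has entered a small neighborhood of $L$---gives that $w(s)-L$ changes sign infinitely many times as $s\to\infty$, hence $Z_{(0,\infty)}[\Phi-\Phi_p^*]=\infty$. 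The most delicate point, which I expect to be the main obstacle, is this last upgrade: one must rule out a nonoscillatory approach along some invariant stable manifold, and the observation that makes this work is that both linearized eigenvalues have strictly positive imaginary part, so there is no real one-dimensional stable direction along which the orbit could decay monotonically.
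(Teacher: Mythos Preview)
The paper does not give a proof of this proposition; it is quoted without argument as a known result from \cite{Wang} (see also \cite{JL,Miyamoto}), so there is no paper-side proof to compare against. Your outline is precisely the classical argument used in those references: the Emden--Fowler substitution $w(s)=r^{2/(p-1)}\Phi(r)$, the energy/LaSalle reduction showing $(w,w')\to(L,0)$, and the discriminant computation $A^2-4(p-1)B<0$ characterizing the spiral sink exactly in the range $p_{\mathrm{S}}<p<p_{\mathrm{JL}}$.

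Two small points worth tightening. First, before invoking LaSalle you should record that the forward orbit is bounded; this follows from $E(s)\le E(-\infty)=0$ together with $-\tfrac{B}{2}w^2+\tfrac{1}{p+1}w^{p+1}\to+\infty$ as $w\to\infty$, which confines $w$ and then $w'$. Second, your worry about the ``upgrade'' from linear focus to nonlinear oscillation is slightly misplaced: Hartman--Grobman alone is indeed a bit too weak, since a merely continuous conjugacy need not send the line $\{w=L\}$ to a line, but the issue is not the absence of a real stable direction. The clean route is that once $v:=w-L\to 0$, the equation reads $v''+Av'+[(p-1)B+o(1)]v=0$, and Sturm comparison with the constant-coefficient equation (whose discriminant is negative by hypothesis) forces $v$ to change sign on every interval of length slightly exceeding $\pi/\omega$, where $\omega=\tfrac{1}{2}\sqrt{4(p-1)B-A^2}$.
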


Let us move on to the case $f(u)=e^u$;
we consider the equation
\begin{equation}
  \label{eq:4.2}
  \Phi''+\frac{N-1}{r}\Phi'+e^\Phi = 0, 
  \qquad 
  r>0.
\end{equation}
The intersection number of solutions of \eqref{eq:4.2} was studied in \cite{JL,Tello,Miyamoto}.
We will use the following result stated in \cite[Theorem~1.1]{Tello}.

\begin{proposition}[\cite{Tello}]
  \label{proposition:4.4}
  Assume $3\le N\le 9$,
  and let $\Phi$ be a solution of \eqref{eq:4.2} satisfying
  $\Phi(0)=\alpha$ and $\Phi'(0)=0$ for some $\alpha \in \mathbb{R}$. 
  Then it holds that $Z_{(0,\infty)}[\Phi-\Phi_\infty^*]=\infty$. 
\end{proposition}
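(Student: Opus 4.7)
The plan is to reduce \eqref{eq:4.2} to a dissipative planar autonomous system by the Emden--Fowler substitution
\begin{equation*}
v(s) := \Phi(e^s) + 2s, \qquad s = \log r,
\end{equation*}
which turns \eqref{eq:4.2} into
\begin{equation*}
v''(s) + (N-2)\,v'(s) + e^{v(s)} - (2N-4) = 0, \qquad s \in \mathbb{R}.
\end{equation*}
Since $\Phi_\infty^*(r) = \log(2N-4) - 2\log r$, the singular solution corresponds to the constant equilibrium $v_\infty := \log(2N-4)$, and zeros of $\Phi - \Phi_\infty^*$ on $(0,\infty)$ are in bijection with zeros of $v - v_\infty$ in $s \in \mathbb{R}$. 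The regular solution $\Phi$ with $\Phi(0)=\alpha$, $\Phi'(0)=0$ lifts to the unique orbit satisfying $v(s) \to -\infty$ and $v'(s) \to 2$ as $s \to -\infty$ (using the expansion $\Phi(r) = \alpha + O(r^2)$).

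The crux of the argument will be showing that this orbit spirals into $(v_\infty,0)$ as $s \to +\infty$. I would introduce the Lyapunov functional
\begin{equation*}
E(s) := \tfrac{1}{2}\,(v'(s))^2 + e^{v(s)} - (2N-4)\,v(s),
\end{equation*}
which satisfies $E'(s) = -(N-2)(v'(s))^2 \le 0$. Since $E$ is coercive on the $(v,v')$ plane --- $e^v$ dominates the linear term as $v \to +\infty$, while $-(2N-4)v$ dominates as $v \to -\infty$ --- every sublevel set is compact, so for any fixed $s_0 \in \mathbb{R}$ the orbit lies in the compact set $\{E \le E(s_0)\}$ for all $s \ge s_0$. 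LaSalle's invariance principle then confines the $\omega$-limit set to $\{v'=0\}\cap\{v''=0\}$, which is exactly $\{(v_\infty,0)\}$.

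The final step is the local analysis at the equilibrium: the linearization has characteristic polynomial
\begin{equation*}
\lambda^2 + (N-2)\lambda + (2N-4) = 0,
\end{equation*}
whose discriminant $(N-2)(N-10)$ is strictly negative precisely when $3 \le N \le 9$. The eigenvalues are then a complex conjugate pair with negative real part $-(N-2)/2$, making $(v_\infty,0)$ a stable focus. Any orbit approaching a stable focus must wind around it, producing an unbounded increasing sequence $\{s_k\}$ along which $v(s_k) = v_\infty$; pulling back via $r = e^{s_k}$ yields infinitely many zeros of $\Phi - \Phi_\infty^*$ on $(0,\infty)$, as required.

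The step I expect to be most delicate is feeding LaSalle with precompactness: the identity for $E$ only furnishes monotonicity, so ruling out the escape $v(s) \to +\infty$ (where the nonlinear term $e^v$ is unbounded) requires carefully combining the coercivity of $E$ with the finiteness of $E(s_0)$ at some intermediate time $s_0$. Everything else --- the identification of the $\omega$-limit set, the stable-focus geometry and the translation back to the variable $r$ --- is then routine once the orbit is trapped in a compact sublevel set.
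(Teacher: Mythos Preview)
The paper does not prove Proposition~4.4; it simply quotes it from \cite{Tello} (and ultimately from the Joseph--Lundgren phase-plane analysis \cite{JL}). So there is no in-paper proof to compare against, only the cited literature.

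Your outline is exactly the classical argument used in \cite{JL,Tello}: the Emden--Fowler change $v(s)=\Phi(e^s)+2s$ yielding the autonomous second-order ODE, the energy $E(s)=\tfrac12 (v')^2+e^v-(2N-4)v$ with $E'=-(N-2)(v')^2$, coercivity of the potential, LaSalle to force $(v,v')\to(v_\infty,0)$, and finally the discriminant computation $(N-2)(N-10)<0$ for $3\le N\le 9$ making the equilibrium a stable spiral. All steps are correct, including the asymptotics $v\to-\infty$, $v'\to 2$ as $s\to-\infty$ coming from the Taylor expansion of the regular solution at $r=0$.

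Two minor remarks. First, your ``delicate step'' is in fact immediate: pick any finite $s_0$ (the regular solution exists for all $r>0$ because $\Phi$ is decreasing and $e^\Phi\le e^\alpha$, so $\Phi'$ and $\Phi$ stay finite on bounded intervals); then $E(s)\le E(s_0)<\infty$ for $s\ge s_0$ and coercivity traps the orbit. Second, the paper later uses that each zero of $\Phi-\Phi_\infty^*$ is \emph{simple} (see the remark after Proposition~4.4, needed in the final intersection-counting step of Section~5). Your spiral picture gives this too: near a stable focus the angular variable is strictly monotone, so $v-v_\infty$ and $v'$ cannot vanish simultaneously, which means $(\Phi-\Phi_\infty^*)'\neq 0$ at each zero. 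It is worth stating this explicitly.
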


We remark that in Propositions~\ref{proposition:4.3} and \ref{proposition:4.4},
every zero of $\Phi-\Phi_p^*$ or $\Phi-\Phi_\infty^*$ is simple.

\section{Proof of Proposition~\ref{proposition:2.1}}
\label{section:5}

This section is devoted to the proof of Proposition~\ref{proposition:2.1}. 
Throughout the section,
we always assume (A1),
write $u(x,t)=U(|x|,t)$ for a radially symmetric solution of \eqref{eq:1.1}
and use $T \in (0,\infty]$ to denote its maximal existence time.

\subsection{Some lemmas}

Let us begin with collecting some lemmas to be used in the proof of Proposition~\ref{proposition:2.1}.

\begin{lemma}
  \label{lemma:5.1}
  There are $N_0 \in \mathbb{N} \cup \{0\}$ and $T_* \in (0,T)$ such that
  \begin{equation}
    \label{eq:5.1}
    \partial_t u(0,t) \neq 0,
    \qquad
    Z_{(0,R^*)}[U(\cdot,t)-U^*]=N_0
    \quad\mbox{for}\quad
    t \in [T_*,T),
  \end{equation}
  where $U^* \in C^2(0,r_0)$ is the singular solution 
  obtained in Proposition~\textup{\ref{proposition:3.1}} and $R^*:=\min\{r_0,R\}$.
\end{lemma}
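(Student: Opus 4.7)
My plan is to prove the two conclusions of \eqref{eq:5.1} separately, each by a Sturmian zero-number argument applied to a suitable linear parabolic equation in the radial variable.

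For the stabilisation of the intersection number, I would set $V(r,t) := U(r,t) - U^*(r)$ on $(0, R^*) \times [T_0, T)$. Subtracting the radial ODE~\eqref{eq:3.1} satisfied by $U^*$ from the radial form of the PDE in~\eqref{eq:1.1}, $V$ satisfies the linear parabolic equation
\begin{equation*}
  \partial_t V = V_{rr} + \tfrac{N-1}{r}\,V_r + a(r,t)\,V,
  \qquad
  a(r,t) := \int_0^1 f'\bigl(\tau U(r,t) + (1-\tau) U^*(r)\bigr) \, d\tau.
\end{equation*}
The singular profile~\eqref{eq:3.3} gives $U^*(r) \to \infty$ as $r \to 0^+$, while $U(\cdot, t)$ is finite at the origin for every $t \in [T_0, T)$; hence $V(r,t) \to -\infty$ as $r \to 0^+$ for each fixed $t$. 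In particular the zeros of $V(\cdot, t)$ stay bounded away from $0$, so $Z_{(0, R^*)}[V(\cdot, t)]$ is finite for every $t$. Applying the Chen--Pol\'a\v{c}ik zero-number theorem (the ball version given by Proposition~\ref{proposition:4.1} has a standard annular counterpart that is what is really needed here), and arranging $V(R^*, \cdot) \neq 0$ on the lateral boundary --- either automatic if $U^*(R^*) \neq k$ in the case $R^* = R$, or obtained by very slightly shrinking $R^*$ otherwise --- one concludes that $t \mapsto Z_{(0, R^*)}[V(\cdot, t)]$ is non-increasing on some final subinterval. As an $\mathbb{N}$-valued non-increasing function it must stabilise at some $N_0 \in \mathbb{N} \cup \{0\}$ on $[T_1, T)$ for a suitable $T_1 \in [T_0, T)$.

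For the non-vanishing of $\partial_t u(0,t)$, I would put $W(r, t) := \partial_t U(r, t)$. Differentiating the equation in~\eqref{eq:1.1} in time, $W$ solves $\partial_t W = W_{rr} + \tfrac{N-1}{r} W_r + f'(U) W$ on $(0, R) \times (0, T)$ together with the homogeneous Dirichlet condition $W(R, t) = 0$ (since $U \equiv k$ on $\partial \Omega$), and radial symmetry forces $W_r(0, t) = 0$ for every $t$. Consequently any time $t_*$ with $W(0, t_*) = 0$ corresponds to a \emph{multiple} zero of $W(\cdot, t_*)$ at the origin, and the strict-drop form of the zero-number theorem then implies a strict decrease of $Z_{[0, R)}[W(\cdot, t)]$ across every such $t_*$. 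Since $W \not\equiv 0$ (otherwise $u$ is a stationary solution) and this zero-count is a finite non-negative integer (applying Proposition~\ref{proposition:4.1} to $W$ restricted, if necessary, to a sub-ball $B_{R - \delta}$ on which $W(R - \delta, t) \neq 0$), the set of such $t_*$ is finite, and hence $W(0, t) = \partial_t u(0, t) \neq 0$ for $t$ in some final interval $[T_2, T)$. Taking $T_* := \max\{T_1, T_2\}$ yields both conclusions of~\eqref{eq:5.1}.

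The main obstacle I anticipate is the careful handling of the two non-standard boundary features: $V$ lives on the punctured ball $B_{R^*} \setminus \{0\}$ rather than on a full ball, so one must rule out zeros accumulating at $r = 0$ and avoid a spurious zero of $V$ at $r = R^*$; meanwhile $W$ vanishes identically on $\partial \Omega$ and simultaneously carries an automatic multiple zero at the origin (by radial symmetry) that must be converted into a strict drop of the zero-count. Both issues are controlled by a small shrinking of the radial interval, by exploiting the blow-up of $U^*$ at $r = 0$ to rule out zero accumulation there, and by the Sturmian strict-drop property at multiple zeros. Once these boundary adjustments are in place, the conclusion is a direct consequence of the finiteness and monotonicity of the zero-count in $t$.
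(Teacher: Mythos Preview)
Your proposal is correct and reconstructs precisely the Matano--Merle argument that the paper invokes; note that the paper does not actually prove Lemma~\ref{lemma:5.1} but simply cites \cite[Lemma~2.10, Corollary~2.11]{MM} and remarks that the same proof carries over to general $f$. Your Sturmian treatment of $V = U - U^*$ on the punctured ball and of $W = \partial_t U$, with the latter's forced multiple zero at the origin triggering the strict-drop property, is exactly the mechanism of that reference.
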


\begin{lemma}
  \label{lemma:5.2}
  Let $w=w(y,\tau)$ be a smooth radially symmetric function satisfying
  \begin{gather}
    \label{eq:5.2}
    \partial_\tau w = \Delta_y w + h(w)
    \quad \mbox{in} \quad 
    \mathbb{R}^N \times I, 
    \\
    \label{eq:5.3}
    \partial_\tau w(0,\tau)=0
    \quad \mbox{for} \quad 
    \tau \in I,
  \end{gather}
  where $I \subset \mathbb{R}$ is an open interval and $h$ is a $C^1$ function.
  Then 
  \begin{equation*}
    \partial_\tau w=0
    \quad \mbox{in} \quad 
    \mathbb{R}^N \times I.
  \end{equation*}
\end{lemma}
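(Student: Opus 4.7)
Setting $v:=\partial_\tau w$, the first step is to differentiate \eqref{eq:5.2} with respect to $\tau$; this yields the linear parabolic equation
\begin{equation*}
  \partial_\tau v = \Delta v + h'(w)\,v \qquad \text{in } \mathbb{R}^N\times I,
\end{equation*}
whose coefficient $h'(w)$ is continuous since $w$ is smooth and $h\in C^1$. The function $v$ inherits from $w$ both smoothness and radial symmetry; writing $v(y,\tau)=V(|y|,\tau)$, assumption \eqref{eq:5.3} gives $V(0,\tau)=0$ for every $\tau\in I$, and smoothness together with radial symmetry automatically forces $V_r(0,\tau)=0$. Hence $r=0$ is a zero of $V(\cdot,\tau)$ of multiplicity at least two for every $\tau\in I$.

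Next I would argue by contradiction. Suppose $v\not\equiv 0$, pick $y_0\in\mathbb{R}^N\setminus\{0\}$ and $\tau_0\in I$ with $v(y_0,\tau_0)\neq 0$, and put $R:=|y_0|$. By continuity there is an open subinterval $I'\subset I$ containing $\tau_0$ on which $V(R,\tau)\neq 0$. Applying Proposition \ref{proposition:4.1} to $v$ on $B_R\times I'$ then gives
\begin{equation*}
  Z_{[0,R)}[V(\cdot,\tau)] <\infty \qquad\text{for every } \tau\in I'.
\end{equation*}

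The last ingredient is the zero-dropping refinement of the intersection-number theorem contained in \cite{CP}: whenever $V(\cdot,\tau^*)$ has a multiple zero in $[0,R)$ at some $\tau^*\in I'$, the map $\tau\mapsto Z_{[0,R)}[V(\cdot,\tau)]$ strictly decreases at $\tau^*$. Since by the preceding step $r=0$ is a multiple zero at every $\tau\in I'$, this non-increasing $\mathbb{N}$-valued function would have to drop strictly at every point of the uncountable interval $I'$, which is impossible. This contradicts $v\not\equiv 0$, so $v\equiv 0$ on $\mathbb{R}^N\times I$, as desired.

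\textbf{Main obstacle.} The delicate step is the use of the zero-dropping property at the singular endpoint $r=0$, since Proposition \ref{proposition:4.1} only records the finiteness assertion explicitly. What will need care is the convention for counting multiplicity of the boundary zero $r=0$ in the radial equation $\partial_\tau V=V_{rr}+\frac{N-1}{r}V_r+h'(w)V$, and verifying that the smoothness-forced vanishing $V_r(0,\tau)=0$ genuinely makes $r=0$ a multiple zero in the sense of \cite{CP}; once this bookkeeping is set up the contradiction above is immediate.
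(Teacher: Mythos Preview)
Your setup mirrors the paper's: both differentiate in $\tau$ to obtain the linear equation for $v=\partial_\tau w$, and both invoke Proposition~\ref{proposition:4.1} after fixing a radius where $V$ does not vanish. The divergence is in the contradiction step.

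The obstacle you flag is genuine and, I think, more than bookkeeping. For a smooth radial function the condition $V_r(0,\tau)=0$ is forced by symmetry; in the radial equation $V_\tau=V_{rr}+\frac{N-1}{r}V_r+h'(w)V$ this is the \emph{natural} regularity condition at the singular endpoint, not an additional degeneracy. Hence the pair $V(0,\tau)=V_r(0,\tau)=0$ does not obviously qualify as a ``multiple zero'' in the sense that triggers strict dropping of the Sturmian zero count---every smooth radial function vanishing at the origin has this property. Making your argument rigorous would require a dropping lemma specifically adapted to the singular point $r=0$, which is not what Proposition~\ref{proposition:4.1} records and is not a routine consequence of the standard statements in \cite{CP}.

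The paper sidesteps this entirely. After using Proposition~\ref{proposition:4.1} to get finiteness of zeros at a single time $\tau_0$, it notes that $V(\cdot,\tau_0)$ must be nonzero and of one sign on some punctured interval $(0,\delta_1]$; combining this with $V(\delta_1,\tau)\neq 0$ for $\tau$ in a short interval $[\tau_0,\tau_0+\varepsilon_1]$, the strong maximum principle on $B_{\delta_1}$ forces $V(0,\tau)\neq 0$ for $\tau\in(\tau_0,\tau_0+\varepsilon_1]$, contradicting \eqref{eq:5.3}. This route is shorter and relies only on the strong maximum principle, not on any fine structure of the zero-number theory at the origin.
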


Lemma~\ref{lemma:5.1} is shown in \cite[Lemma~2.10, Corollary~2.11]{MM} 
in the case $f(u)=u^p$,
and the same proof is valid also for general $f(u)$.
Therefore we only give the proof of Lemma~\ref{lemma:5.2}. 

\begin{proof}[Proof of Lemma~\ref{lemma:5.2}]
  Write $w(y,\tau)=W(|y|,\tau)$ 
  and set $\tilde w:=\partial_\tau w$, $\tilde W:=\partial_\tau W$. 
  Then by \eqref{eq:5.2} we see that $\tilde w$ satisfies the linear parabolic equation
  \begin{equation*}
    \partial_\tau \tilde w = \Delta_y \tilde w + h'(W(|y|,\tau)) \tilde w
    \quad \mbox{in} \quad 
    \mathbb{R}^N \times I.
  \end{equation*} 

  Contrary to the claim,
  suppose that $\tilde W(\delta_0,\tau_0) \neq 0$ 
  for some $(\delta_0,\tau_0) \in [0,\infty) \times I$.
  Then the assumption \eqref{eq:5.3} gives $\delta_0>0$.
  By continuity, 
  we can take $\varepsilon_0>0$ such that 
  \begin{equation*}
    \tilde W (\delta_0,\tau) \neq 0
    \quad \mbox{for} \quad 
    \tau\in [\tau_0-\varepsilon_0,\tau_0+\varepsilon_0].
  \end{equation*} 
  Applying Proposition~\ref{proposition:4.1},
  we deduce that $Z_{[0,\delta_0]}[\tilde W(\cdot,\tau_0)]<\infty$.
  Therefore there is $\delta_1>0$ 
  such that $\tilde W(\rho,\tau_0) \neq 0$ for $\rho \in (0,\delta_1]$.
  By replacing $\tilde W$ with $-\tilde W$ if necessary,
  we may assume that
  \begin{equation}
    \label{eq:5.4}
    \tilde W(\rho,\tau_0)>0
    \quad \mbox{for} \quad 
    \rho \in (0,\delta_1].
  \end{equation}
  This particularly gives $\tilde W(\delta_1,\tau_0)>0$,
  and hence by continuity there is $\varepsilon_1>0$ such that
  \begin{equation}
    \label{eq:5.5}
    \tilde W(\delta_1,\tau)>0
    \quad \mbox{for} \quad 
    \tau \in [\tau_0,\tau_0+\varepsilon_1]. 
  \end{equation}
  From \eqref{eq:5.4} and \eqref{eq:5.5},
  we can apply the strong maximum principle to obtain
  \begin{equation}
    \notag 
    \tilde W(\rho,\tau)>0
    \quad \mbox{for} \quad 
    (\rho,\tau) \in [0,\delta_1] \times (\tau_0,\tau_0+\varepsilon_1].
  \end{equation}
  This contradicts the assumption \eqref{eq:5.3},
  and the lemma follows.
\end{proof}

\subsection{Bounds for rescaled solutions}

For $t \in (0,T)$, 
we consider the rescaled functions
\begin{equation*}
  v^{t}(y,\tau):=\frac{F\big( u(\lambda (t)y,t+\lambda (t)^2 \tau) \big)}{\lambda (t)^2}
  \quad \mbox{for} \quad 
  (y,\tau) \in B_{R/\lambda(t)} \times I^{t},
\end{equation*}
where
\begin{equation*}
  \lambda (t):=F(u(0,t))^{\frac{1}{2}},
  \qquad
  I^t:=\left( -\frac{t}{\lambda(t)^2}, \frac{T-t}{\lambda(t)^2}\right).
\end{equation*}
The aim of this subsection is to derive upper and lower bounds for $v^{t}$.

\begin{proposition}
  \label{proposition:5.1}
  Assume \textup{(A2)}, \eqref{eq:1.8} and
  \begin{equation}
    \label{eq:5.6}
    \lim_{t \to T} u(0,t)=\infty.
  \end{equation}
  Then for any $\rho_0>0$ and $\tau_0 \in (0,1)$,
  there are constants $c>0$, $C>0$ and $T^* \in [T_0,T)$ such that
  \begin{gather}
    \label{eq:5.7}
    \overline{B_{\rho_0}} \times [-\tau_0,\tau_0] \subset B_{R/\lambda(t)} \times I^{t}
    \quad \mbox{for} \quad t \in [T^*,T),
    \\
    \label{eq:5.8}
    c \le v^{t}(y,\tau) \le C,
    \quad
    |\nabla v^{t}(y,\tau)| \le C
    \quad \mbox{for} \quad 
    (y,\tau) \in \overline{B_{\rho_0}} \times [-\tau_0,\tau_0], \,\,\, t \in [T^*,T).
  \end{gather}
\end{proposition}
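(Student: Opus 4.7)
\noindent\textbf{Step 1 (the inclusion \eqref{eq:5.7}).}
By \eqref{eq:1.2} and \eqref{eq:5.6} we have $\lambda(t)^2=F(u(0,t))\to 0$ as $t\to T$, so $\overline{B_{\rho_0}}\subset B_{R/\lambda(t)}$ and $-t/\lambda(t)^2\to-\infty$ are immediate. For the right endpoint of $I^t$, I use that $x=0$ is the interior maximum of $u(\cdot,t)$ by \eqref{eq:1.8}, so $\Delta u(0,t)\le 0$ and the PDE yields $M'(t)\le f(M(t))$ with $M(t)=u(0,t)$. Rewriting this as $-\tfrac{d}{dt}F(M(t))\le 1$ and integrating from $t$ to $s\to T^-$ gives $F(M(t))\le T-t$ when $T<\infty$, so $(T-t)/\lambda(t)^2\ge 1>\tau_0$; the case $T=\infty$ is immediate.

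\noindent\textbf{Step 2 (lower bound on $v^t$).}
The normalization gives $v^t(0,0)=1$. By Lemma~\ref{lemma:5.1}, $\partial_t u(0,\cdot)$ has constant sign on $[T_*,T)$; this sign must be positive since $u(0,t)\to\infty$, so $M$ is strictly increasing on $[T_*,T)$. For $\tau\in[-\tau_0,0]$ monotonicity yields $F(M(t+\lambda^2\tau))\ge F(M(t))=\lambda^2$, while for $\tau\in[0,\tau_0]$ the one-sided derivative estimate from Step~1 gives $F(M(t+\lambda^2\tau))\ge\lambda^2(1-\tau_0)$. Combined with the pointwise inequality $u(\lambda y,s)\le M(s)$ from \eqref{eq:1.8}, this gives the uniform lower bound $v^t(y,\tau)\ge 1-\tau_0>0$ on $\overline{B_{\rho_0}}\times[-\tau_0,\tau_0]$.

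\noindent\textbf{Step 3 (upper and gradient bounds).}
Both remaining bounds in \eqref{eq:5.8} come from the Friedman--McLeod gradient estimate (Proposition~\ref{proposition:4.2}), which I apply after localizing to a slightly smaller ball to accommodate the boundary value $u=k$. This gives $|\nabla u(x,s)|^2\le 2\int_{u(x,s)}^{L}f(\eta)\,d\eta$ with $L$ the maximum of $u$ on a suitable sub-cylinder. The substitutions $\eta=F^{-1}(\xi)$, $\xi=\lambda(t)^2\sigma$ transform this into
\begin{equation*}
  |\nabla_y v^t(y,\tau)|^2\le 2\int_{1}^{v^t(y,\tau)}\Bigl(\frac{f(F^{-1}(\lambda(t)^2\sigma))}{f(u(\lambda(t)y,t+\lambda(t)^2\tau))}\Bigr)^{\!2}d\sigma.
\end{equation*}
The integrand grows like $v^{t\,2q}$ for $q>1$, so the naive radial integration in the $v^t$ variable is borderline and only propagates control over a short distance. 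To close the argument I pass to the quasi-similar variable $w^t_\lambda$ of \eqref{eq:1.10} (proportional to $(v^t)^{-(q-1)}$ for $q>1$, or $-\log v^t+\mathrm{const}$ for $q=1$), in which the above estimate becomes the \emph{uniform} bound $|\nabla_y w^t_\lambda|\le C$ by exact cancellation of the scaling. Since $w^t_\lambda$ is a monotone function of $u$, \eqref{eq:1.8} gives $w^t_\lambda(y,\tau)\le w^t_\lambda(0,\tau)$, and Step~2 bounds $w^t_\lambda(0,\tau)$ uniformly, so $w^t_\lambda$ is bounded above independently of $t$. The matching lower bound on $w^t_\lambda$ (equivalent to the upper bound on $v^t$) follows by combining the uniform gradient bound with the strict positivity $w^t_\lambda(0,\tau)\ge c_0$ and a Harnack/strong-maximum-principle argument applied to the parabolic equation \eqref{eq:1.11}. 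Translating back yields \eqref{eq:5.8}.

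\noindent\textbf{Step 4 (main difficulty).}
The central obstruction is the upper bound on $v^t$: in the $v^t$ variable the gradient estimate does not admit uniform radial propagation out to arbitrary $\rho_0$. The quasi-similar transformation \eqref{eq:1.10} is calibrated precisely to absorb this offending scaling and reduce the inequality to the scale-invariant case. Rigorously this requires uniform control, as $t\to T$, of the deviations $f/f_0-1$ and $f_0'F_0-q$ from \textup{(A2)} on the region where $u(\lambda y,t+\lambda^2\tau)$ is large; a minor additional issue is that Proposition~\ref{proposition:4.2} is stated with zero Dirichlet data, so a small adaptation via localization is needed.
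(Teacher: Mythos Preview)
Steps 1 and 2 are essentially correct and match the paper's argument (Lemma~5.3 for \eqref{eq:5.7} and the right endpoint of $I^t$; the decomposition \eqref{eq:5.22} with \eqref{eq:1.8} and \eqref{eq:5.9} for the lower bound $v^t\ge 1-\tau_0$).

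Step 3, however, has a genuine gap, and it is exactly the one you flag as the ``main difficulty'' in Step~4 without resolving it. Two points:

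\emph{(a) The ``exact cancellation'' is not uniform.} Your claim that the Friedman--McLeod estimate becomes $|\nabla_y w^t_\lambda|\le C$ in the quasi-similar variable relies on the asymptotic relation $f(F^{-1}(\xi))\sim \xi^{-q}$, which in turn requires $f_0'F_0\approx q$ along the values taken by $u(\lambda y,t+\lambda^2\tau)$. That is only guaranteed where $u$ is large; ensuring $u$ is uniformly large on all of $\overline{B_{\rho_0}}\times[-\tau_0,\tau_0]$ is precisely the upper bound on $v^t$ you are trying to prove. For the exact power or exponential nonlinearity the cancellation is exact, but under (A2) alone it is only asymptotic and the argument is circular.

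\emph{(b) The spreading step does not close on \eqref{eq:1.11}.} Even granting $|\nabla w^t_\lambda|\le C$, combining it with $w^t_\lambda(0,\tau)\ge c_0$ only gives $w^t_\lambda(y,\tau)\ge c_0-C|y|$, i.e.\ control on a fixed small ball. Your proposed Harnack/comparison argument on \eqref{eq:1.11} does not directly work because the extra term $(f_0'F_0-q)|\nabla w|^2$ (or its $q>1$ analogue divided by $w$) has no sign control, so $w^t_\lambda$ is \emph{not} a supersolution of the heat equation in general.

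The paper bypasses both issues by separating $q=1$ and $q>1$. For $q=1$, integrating the gradient estimate radially (Lemma~5.5) already forces $u(\lambda y,s)/u(0,s)\to 1$ on \emph{any} $B_{\rho_0}$ (the coefficient $2(q-1)$ vanishes), so $m(t)\to\infty$ comes for free; a further ODE integration (the function $\psi_\gamma$ in \eqref{eq:5.28}, with $\psi_{\kappa(t)}^{-1}$ defined on an interval of length $\to\infty$ as $\kappa(t)\to 1$) then gives the upper bound on $v^t$. For $q>1$, Lemma~5.5 controls only a fixed small ball $B_{\rho_*}$ with $\rho_*=\sqrt{1-\tau_1}/(2\sqrt{q-1})$. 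The key move is to pass to the \emph{linear} rescaling $w^t(y,\tau)=u(\lambda y,t+\lambda^2\tau)/u(0,t)$, which satisfies $\partial_\tau w^t\ge \Delta_y w^t$ simply because $f\ge 0$; this is a clean heat supersolution with no error terms. A comparison with an explicit heat-equation solution on the annulus $B_{\rho_0+1}\setminus B_{\rho_*}$ then spreads the lower bound to all of $B_{\rho_0}$ (inequality \eqref{eq:5.35}), and Lemma~5.6 converts that into the upper bound on $v^t$. The gradient bound on $v^t$ is obtained \emph{afterwards} from \eqref{eq:5.25}, using the already established upper bound.

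In short, the idea you are missing is to run the comparison/spreading step on $u/u(0,t)$ rather than on $w^t_\lambda$; the former is a heat supersolution for any $f\ge 0$, while the latter is not under (A2).
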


We first consider estimates for $\lambda(t)$.
\begin{lemma}
  \label{lemma:5.3}
  Assume \eqref{eq:1.8}.
  Then for $t \in [T_0,T)$ and $\tau \in I^t$ with $t+\lambda(t)^2\tau \ge T_0$,
  it holds that
  \begin{equation}
    \label{eq:5.9}
    1-|\tau| \le \frac{\lambda(t+\lambda(t)^2\tau)^2}{\lambda(t)^2} \le 1+|\tau|,
    \qquad
    \frac{T-t}{\lambda(t)^2} \ge 1.
  \end{equation}
\end{lemma}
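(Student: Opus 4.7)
Under the hypothesis \eqref{eq:1.8}, $M(t) := u(0,t) = \|u(\cdot,t)\|_{L^\infty(\Omega)}$ on $[T_0,T)$, so $\lambda(t)^2 = F(M(t))$. Setting $\phi(t) := F(M(t))$, the lemma reduces to showing that $\phi$ is $1$-Lipschitz on $[T_0,T)$ together with $\phi(t) \leq T-t$. The starting point of my plan is the one-sided ODE inequality at the peak: since $x = 0$ is a spatial maximum of $u(\cdot,t)$, $\Delta u(0,t) \leq 0$, and evaluating the equation at $x = 0$ yields
\[
  M'(t) = \Delta u(0,t) + f(M(t)) \leq f(M(t)),
\]
which is equivalent to $\phi'(t) \geq -1$.

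Integrating this differential inequality over subintervals of $[T_0,T)$ immediately gives $\phi(s) \geq \phi(t) - (s-t)$ for $s \geq t$ and $\phi(s) \leq \phi(t) + (t-s)$ for $s \leq t$. Substituting $s = t + \lambda(t)^2\tau$ and noting $|s-t| = |\tau|\lambda(t)^2$, these translate into $\lambda(s)^2 \geq (1-|\tau|)\lambda(t)^2$ when $\tau > 0$ and $\lambda(s)^2 \leq (1+|\tau|)\lambda(t)^2$ when $\tau < 0$, which supplies one half of each of the two double inequalities asserted in the lemma. The second claim $(T-t)/\lambda(t)^2 \geq 1$ is trivial when $T = \infty$; when $T < \infty$, maximality of $T$ forces $M(s) \to \infty$ and hence $\phi(s) \to 0$ as $s \to T$, so passing to the limit in $\phi(s) \geq \phi(t) - (s-t)$ yields $\phi(t) \leq T-t$.

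The remaining half of the Lipschitz property---namely $\phi'(t) \leq 1$, equivalently a lower bound of the form $\Delta u(0,t) \geq -2f(M(t))$---is the technical core. My plan is to exploit Proposition~\ref{proposition:4.2} through the auxiliary quantity
\[
  J(x,t) := \tfrac{1}{2}|\nabla u(x,t)|^2 - \int_{u(x,t)}^{M(t)} f(\eta)\,d\eta,
\]
which, after the shift $\tilde u = u-k$ if necessary, is $\leq 0$ on $\overline{\Omega}$ by Proposition~\ref{proposition:4.2} and vanishes at $x = 0$. Thus $x = 0$ is an interior maximum of $J(\cdot,t)$, so the second-order condition $\Delta J(0,t) \leq 0$ holds. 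Combining this with the radial-symmetric identity $|\nabla^2 u(0)|^2 = (\Delta u(0,t))^2/N$ produces a quadratic inequality of the form $(\Delta u(0,t))^2/N + f(M(t))\,\Delta u(0,t) \leq 0$, and hence a lower bound $\Delta u(0,t) \geq -Cf(M(t))$ for some constant $C$ depending on $N$.

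The main obstacle, as I see it, is to obtain this lower bound with a constant sharp enough to yield $\phi'(t) \leq 1$ rather than merely $\phi'(t) \leq C-1$: the direct implementation above gives $C = N$ and so only a Lipschitz constant of $N-1$, which falls short of the sharp $|\tau|$ claimed in the statement for $N \geq 3$. Closing this gap---whether by refining the choice of auxiliary function, by further exploiting the structure of the gradient estimate at the peak, or by combining with monotonicity properties of $M(t)$ on $[T_0, T)$---is the delicate step on which I would expect to spend the bulk of the effort.
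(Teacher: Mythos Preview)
Your derivation of $\phi'(t)\ge -1$ from $\Delta u(0,t)\le 0$ is correct and matches the paper exactly. The gap is in what you call the ``technical core'': there is no technical core. The paper does not attempt to prove $\phi'\le 1$ by a second-order argument; it simply uses $\phi'<0$. Lemma~\ref{lemma:5.1} (equation~\eqref{eq:5.1}) supplies $\partial_t u(0,t)\ne 0$ on $[T_*,T)$, and in the context in which Lemma~\ref{lemma:5.3} is actually applied (under \eqref{eq:5.6}, where $M(t)\to\infty$) this forces $M'(t)>0$, hence $\phi'(t)=-M'(t)/f(M(t))<0$. Together with $\phi'\ge -1$ one has $0<-\phi'\le 1$, and the full two-sided bound follows at once:
\[
\Bigl|\frac{\lambda(t+\lambda(t)^2\tau)^2}{\lambda(t)^2}-1\Bigr|
=\frac{1}{\lambda(t)^2}\Bigl|\int_t^{t+\lambda(t)^2\tau}\frac{d}{ds}\lambda(s)^2\,ds\Bigr|
\le|\tau|.
\]
You do list ``monotonicity properties of $M(t)$'' as one possible avenue at the very end, but you present it as one option among several for a delicate step; in fact it is the entire argument, and there is nothing delicate about it.

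Your $J$-function computation is correct as far as it goes---it does yield $\Delta u(0,t)\ge -Nf(M(t))$ and hence $\phi'\le N-1$---but it is both unnecessary and, as you yourself recognize, insufficient for the sharp constant in the statement. Proposition~\ref{proposition:4.2} plays no role in this lemma; it enters only later, in Lemma~\ref{lemma:5.4}, for a different purpose. (Incidentally, invoking Proposition~\ref{proposition:4.2} as you do would itself require first verifying the gradient hypothesis at an initial time, which is not automatic and in the paper is arranged precisely via \eqref{eq:5.6}.)
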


\begin{proof}
  Since the assumption \eqref{eq:1.8} means that 
  $x=0$ is a maximum point of $u(\cdot,s)$, 
  we have $\Delta u(0,s)\le 0$ for $s \in [T_0,T)$.
  Hence
  \begin{equation*}
    \partial_t u(0,s)=\Delta u(0,s)+f(u(0,s)) \le f(u(0,s))
    \quad \mbox{for} \quad s \in [T_0,T).
  \end{equation*}
  From this and \eqref{eq:5.1},
  we infer that
  \begin{equation}
    \label{eq:5.10}
    0<
    -\frac{d}{ds} \lambda (s)^2
    =\frac{\partial_t u(0,s)}{f(u(0,s))} 
    \le 1
    \quad \mbox{for} \quad s \in [T_0,T).
  \end{equation}
  This gives the former of \eqref{eq:5.9}, since
  \begin{equation*}
    \left|
    \frac{\lambda(t+\lambda(t)\tau)^2}{\lambda(t)^2}-1
    \right|
    = \frac{|\lambda(t+\lambda(t)^2 \tau)^2 -\lambda(t)^2|}{\lambda(t)^2}
    =\frac{1}{\lambda(t)^2}
    \left|
    \int_{t}^{t+\lambda(t)^2 \tau} \frac{d}{ds} \lambda (s)^2 ds
    \right|
    \le |\tau|.
  \end{equation*}
  
  Let us prove the latter of \eqref{eq:5.9}.
  Since the desired estimate is obviously true for $T=\infty$,
  we only need to consider the case $T<\infty$.
  Then $u$ must blow up at $T$; 
  there is $\{t_i\} \subset [T_0,T)$ 
  such that $t_i \to T$ and $\| u(\cdot,t_i)\|_{L^\infty(\Omega)} =u(0,t_i) \to \infty$ as $i \to \infty$.
  In particular, we have $\lambda (t_i)=F(u(0,t_i))^{1/2} \to 0$ as $i \to \infty$.
  Integrating \eqref{eq:5.10} from $t$ to $t_i$ and letting $i \to \infty$,
  we deduce that $\lambda (t)^2 \le T-t$.
  We have thus verified the latter of \eqref{eq:5.9},
  and the lemma follows.
\end{proof}

\begin{lemma}
  \label{lemma:5.4}
  Assume \eqref{eq:1.8} and \eqref{eq:5.6}.
  Then there exists $T_1 \in [T_0,T)$ such that
  \begin{equation}
    \label{eq:5.11}
    \frac{1}{2} |\nabla u(x,t)|^2 \le \int_{u(x,t)}^{u(0,t)} f(\eta)\, d\eta 
    \quad\mbox{for}\quad (x,t) \in \Omega \times [T_1,T).
  \end{equation}
\end{lemma}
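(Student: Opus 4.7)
I would prove Lemma~\ref{lemma:5.4} by reducing it to the gradient estimate of Proposition~\ref{proposition:4.2} via the substitution $v := u - k$ with $\tilde h(s) := f(s+k)$. This sends the equation in \eqref{eq:1.1} to $\partial_t v = \Delta v + \tilde h(v)$ with homogeneous Dirichlet condition $v|_{\partial\Omega} = 0$, preserves $|\nabla v| = |\nabla u|$, and gives $\int_v^{v(0,t)} \tilde h(s)\,ds = \int_u^{u(0,t)} f(\eta)\,d\eta$; hence the conclusion of Proposition~\ref{proposition:4.2} applied to $v$ translates directly into \eqref{eq:5.11}. The task is to verify the hypotheses of Proposition~\ref{proposition:4.2} on a cylinder $\overline{\Omega} \times [t_1, t]$ for a suitable choice of $t_1$: non-negativity of $v$, smoothness, and the initial gradient bound.

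For the non-negativity $v \geq 0$, I would show that the minimum $m(t) := \min_{\overline{\Omega}} u(\cdot, t)$ reaches $k$ in finite time (the case $k=0$ being immediate from the maximum principle). As long as $m(t) < k$, the minimum is attained at an interior point, where $\Delta u \geq 0$ and thus $\partial_t u \geq f(u)$, giving in an envelope sense $m'(t) \geq f(m(t))$, i.e.\ $\frac{d}{dt} F(m(t)) \leq -1$. Integrating and using $m(t) \leq k$ forces $m(t) = k$ by some time $t^\ast \leq F(\min u_0) - F(k) < \infty$; from then on, comparison with the constant subsolution $k$ (noting $f(k) \geq 0$) keeps $m(t) = k$, so $u \geq k$ on $\overline{\Omega} \times [t^\ast, T)$. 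Since this requires $m$ as a function of $t$, which need not be smooth, a clean way to bypass the envelope step is to compare directly with the scalar ODE $\varphi' = f(\varphi)$, $\varphi(0) = \min u_0$, stopped at the level $k$.

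To set up the initial gradient bound, I would fix $t_1 := \max(t^\ast, T_\ast)$ with $T_\ast$ from Lemma~\ref{lemma:5.1}; then $u(0, \cdot)$ is strictly increasing on $[t_1, T)$ with limit $\infty$ by \eqref{eq:5.6}. Parabolic regularity provides $M_1 := \tfrac{1}{2}\max_{\overline{\Omega}} |\nabla u(\cdot, t_1)|^2 < \infty$. Assumption (A2) together with $q \geq 1$ implies $f_0(u) \to \infty$ and hence $f(u) \to \infty$, so $\int_{u(0, t_1)}^{\infty} f = \infty$; I can therefore pick $L_0$ with $\int_{u(0, t_1)}^{L_0} f \geq M_1$ and then $T_1 \in [t_1, T)$ with $u(0, T_1) \geq L_0$. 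For each $t \in [T_1, T)$, \eqref{eq:1.8} combined with the monotonicity of $u(0, \cdot)$ yields $\max_{\overline{\Omega} \times [t_1, t]} v = v(0, t)$, and using $u(x, t_1) \leq u(0, t_1)$ and $u(0, t) \geq L_0$ we obtain
\[
\tfrac{1}{2} |\nabla v(x, t_1)|^2 \leq M_1 \leq \int_{u(0, t_1)}^{L_0} f(\eta)\,d\eta \leq \int_{u(x, t_1)}^{u(0, t)} f(\eta)\,d\eta,
\]
which verifies the initial-gradient hypothesis of Proposition~\ref{proposition:4.2}. Its conclusion at the final time $t$, rewritten in terms of $u$, is precisely \eqref{eq:5.11}. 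The hardest part is the non-negativity step: getting the minimum to rise to $k$ before $T$ requires a quantitative estimate on how fast reaction lifts an interior minimum, and justifying that this happens within the lifespan of $u$.
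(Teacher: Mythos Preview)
Your overall strategy is exactly the paper's: fix an initial time $t_1$ after which $u(0,\cdot)$ is strictly increasing (via Lemma~\ref{lemma:5.1} and \eqref{eq:5.6}), use $\int^\infty f=\infty$ together with $u(0,t)\to\infty$ to find $T_1$ so that the initial-gradient hypothesis of Proposition~\ref{proposition:4.2} holds with $L=u(0,t_0)$ for every $t_0\in[T_1,T)$, and then read off \eqref{eq:5.11} at the terminal time $t=t_0$. The paper carries this out with $t_1=T_*$ and derives $\int_0^\infty f=\infty$ directly from \eqref{eq:1.2} rather than through (A2).

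Where you diverge is the shift $v=u-k$ and the attendant claim that $u\ge k$ on some tail interval $[t^\ast,T)$. Your ODE/envelope argument for this step is correct as far as it goes: while $m(t)<k$ the minimum is interior, and comparison with $\varphi'=f(\varphi)$ yields $m(t)\ge k$ by time $t^\ast=F(m(0))-F(k)$. But nothing in the hypotheses forces $t^\ast<T$. In a blow-up scenario the only general lower bound on the lifespan is $T\ge F(\|u_0\|_\infty)$ (from the spatially constant supersolution), and this can be much smaller than $F(\min u_0)-F(k)$ when $\min u_0$ is small; the blow-up at the origin does not help the interior minimum climb. So the step you single out as ``the hardest part'' is genuinely unresolved in your outline, and there is no evident way to close it from the stated assumptions \eqref{eq:1.8}, \eqref{eq:5.6} alone.

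The paper sidesteps this entirely by applying Proposition~\ref{proposition:4.2} directly to $u$, without shifting. In effect it treats the hypotheses ``$u\ge 0$'' and ``$u=0$ on $\partial\Omega$'' in Proposition~\ref{proposition:4.2} as serving only to keep $h$ defined and non-negative on the range of $u$; in your shifted picture this amounts to using $\tilde h(s)=f(s+k)$ on $[-k,\infty)$ together with $v\ge -k$ (which is just $u\ge 0$), and the Friedman--McLeod argument then goes through unchanged. If you adopt that viewpoint, the non-negativity detour disappears and your proof collapses to the paper's.
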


\begin{proof}
  By Lemma~\ref{lemma:5.1} and the assumption \eqref{eq:5.6},
  we have $\partial_t u(0,t)>0$ for $t \in [T_*,T)$,
  where $T_*$ is the number given in Lemma~\ref{lemma:5.1}.
  This with the assumption \eqref{eq:1.8} shows that
  \begin{equation}
    \label{eq:5.12}
    \max_{(x,t) \in \overline{\Omega} \times [T_*,t_0]} u(x,t) =u(0,t_0)
    \quad\mbox{for}\quad t_0 \in [T_*,T).
  \end{equation} 
  One easily sees that \eqref{eq:1.2} implies
  \begin{equation*}
    \int_0^\infty f(\eta) \, d\eta=\infty.
  \end{equation*}
  Hence by the assumption \eqref{eq:5.6},
  we can take $T_1 \in [T_*,T)$ such that
  \begin{equation*}
    \sup_{x \in \Omega} 
    \left(
    \frac{1}{2} |\nabla u(x,T_*)|^2 +\int_0^{u(x,T_*)} f(\eta)\, d\eta
    \right)
    \le
    \int_0^{u(0,t_0)} f(\eta)\, d\eta
    \quad\mbox{for}\quad t_0 \in [T_1,T),
  \end{equation*}
  which leads to
  \begin{equation*}
    \frac{1}{2} |\nabla u(x,T_*)|^2
    \le \int_{u(x,T_*)}^{u(0,t_0)} f(\eta)\, d\eta
    \quad\mbox{for}\quad x \in \Omega, \ t_0 \in [T_1,T).
  \end{equation*}
  From this and \eqref{eq:5.12},
  we see that Proposition~\ref{proposition:4.2} can be applied for $t_1=T_*$.
  We then conclude that
  \begin{equation*}
    \frac{1}{2} |\nabla u(x,t)|^2
    \le \int_{u(x,t)}^{u(0,t_0)} f(\eta)\, d\eta
    \quad\mbox{for}\quad (x,t) \in \Omega \times [T_*,t_0],
  \end{equation*}
  provided $t_0 \in [T_1,T)$.
  Choosing $t=t_0$ thus gives the desired inequality.
\end{proof}

Using the previous lemma,
we derive an estimate for the ratio $u(x,t)/u(0,t)$.
\begin{lemma}
  \label{lemma:5.5}
  Suppose that \textup{(A2)}, \eqref{eq:1.8} and \eqref{eq:5.6} hold.
  Then
  \begin{equation}
    \label{eq:5.13}
    \liminf_{t \to T} \inf_{|y| \le \rho_0, |\tau| \le \tau_0} 
    \frac{u(\lambda(t)y,t+\lambda(t)^2\tau)}{u(0,t+\lambda(t)^2\tau)}
    \ge 1 -\frac{2(q-1)\rho_0^2}{1-\tau_0}
  \end{equation}
  for $\rho_0>0$ and $\tau_0 \in (0,1)$.
\end{lemma}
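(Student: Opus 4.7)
The plan is to combine the gradient estimate of Lemma~\ref{lemma:5.4} with the asymptotic information in (A2) and the scale control of Lemma~\ref{lemma:5.3}. Fix $\rho_0>0$, $\tau_0\in(0,1)$, and set $s:=t+\lambda(t)^2\tau$. By \eqref{eq:5.6} and Lemma~\ref{lemma:5.3}, once $t$ is sufficiently close to $T$ one has $s\ge T_1$ uniformly for $|\tau|\le\tau_0$, so \eqref{eq:5.11} is in force. Introducing $V(\rho):=U(\rho,s)/u(0,s)$ and substituting $\eta=u(0,s)v'$ in the integral on the right-hand side of \eqref{eq:5.11}, this bound takes the rescaled form
\begin{equation*}
  V'(\rho)^2 \le \frac{2}{u(0,s)}\int_{V(\rho)}^{1} f\bigl(u(0,s)\,v'\bigr)\,dv'.
\end{equation*}

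Next, I would use (A2) to extract the correct asymptotic form of the right-hand side as $u(0,s)\to\infty$. Using $(\log F_0)'(u)=-1/(f_0(u)F_0(u))$ and $f_0'F_0\to q$, one derives for $q>1$ the pointwise convergences $F(u_0v')/F(u_0)\to (v')^{-1/(q-1)}$ and $f(u_0 v')/f(u_0)\to (v')^{q/(q-1)}$ as $u_0\to\infty$, together with $u_0 f(u_0)\sim (q-1)u_0^2/F(u_0)=(q-1)u(0,s)^2/\lambda(s)^2$. Plugging these in, for any $\varepsilon>0$ and $t$ close enough to $T$,
\begin{equation*}
  V'(\rho)^2 \le \frac{2(q-1)^2(1+\varepsilon)}{(2q-1)\,\lambda(s)^2}\bigl(1-V(\rho)^{(2q-1)/(q-1)}\bigr).
\end{equation*}
Separating variables, integrating from $0$ to $\rho=\lambda(t)|y|$, and using the elementary inequality $1-W^{(2q-1)/(q-1)}\le \tfrac{2q-1}{q-1}(1-W)$ to bound the resulting integral of $dW/\sqrt{1-W^{(2q-1)/(q-1)}}$ from below by $2\sqrt{(q-1)/(2q-1)}\sqrt{1-V(\lambda(t)|y|)}$, I obtain the quadratic control
\begin{equation*}
  1-V(\lambda(t)|y|)\le \frac{(q-1)(1+\varepsilon)\,\lambda(t)^2|y|^2}{2\,\lambda(s)^2}.
\end{equation*}
Since $\lambda(s)^2\ge (1-\tau_0)\lambda(t)^2$ by Lemma~\ref{lemma:5.3}, the right-hand side is at most $(q-1)(1+\varepsilon)\rho_0^2/(2(1-\tau_0))$. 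Passing to $\liminf$ in $t$ and then letting $\varepsilon\to0^+$ yields a bound that implies \eqref{eq:5.13} (in fact with a constant four times smaller). In the case $q=1$ the claim reduces to $V\to1$, which follows from the analogous computation after the substitution $W=u(0,s)-U$: then Lemma~\ref{lemma:5.4} forces $W(\rho)\lesssim \rho^2/\lambda(s)^2$ to remain bounded while $u(0,s)\to\infty$, so that $V=1-W/u(0,s)\to 1$.

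The main obstacle is making the asymptotic $f(u_0v')/f(u_0)\to (v')^{q/(q-1)}$ quantitative and uniform in $v'$ on intervals $[V_*,1]$ with $V_*>0$. Since (A2) provides only pointwise control of $f/f_0-1$ and $f_0'F_0-q$, this uniformity must be extracted by a Gr\"onwall-type argument applied to $(\log F_0)'=-1/(f_0F_0)$, combined with careful estimation of the error $f/f_0-1$. Additional care is needed near $v'=1$, where the integrand governing the change-of-variables estimate for $V$ becomes singular; the elementary inequality above is precisely what keeps this singularity integrable. The case $q=1$ must be handled separately because $f(u_0v')/f(u_0)$ has no finite pointwise limit for $v'<1$ in that regime.
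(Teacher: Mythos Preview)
Your approach is different from the paper's and more elaborate than needed. The paper never invokes the asymptotic $f(u_0v')/f(u_0)\to (v')^{q/(q-1)}$. Instead, after arranging $f\le 2f_0$ and noting that $f_0'F_0\to q$ forces $f_0'\to\infty$ (so $f_0$ is eventually monotone), it uses the crude bound
\[
\int_{u(z,s)}^{u(0,s)} f(\eta)\,d\eta \;\le\; 2f_0(u(0,s))\bigl(u(0,s)-u(z,s)\bigr),
\]
which with Lemma~\ref{lemma:5.4} and integration along a ray gives $u(0,s)-u(x,s)\le f_0(u(0,s))\,|x|^2$ directly. The ratio estimate then follows from the single limit $f_0(\eta)F_0(\eta)/\eta\to q-1$ (L'Hospital applied to $(f_0F_0)'=f_0'F_0-1$), together with Lemma~\ref{lemma:5.3}. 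No uniformity in $v'$ and no case split on $q$ is required.

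Your route can be made to work and does produce the sharper constant $(q-1)/2$, but there is a circularity you have not closed. The differential inequality for $V$ you write is valid only where $V\ge V_*$ for some fixed $V_*>0$, since that is the region on which the regular-variation asymptotic is uniformly available; yet you integrate it on all of $[0,\lambda(t)|y|]$ without first knowing $V$ stays above $V_*$ there. One must run a continuity argument from $V(0)=1$, using the quadratic bound itself to propagate $V\ge V_*$---this works in the nontrivial range of $\rho_0$, but is absent from your sketch. You also still owe the proof that (A2) implies the regular variation of $f$; it does (integrate $\eta f_0'/f_0=f_0'F_0\cdot \eta/(f_0F_0)\to q/(q-1)$), but altogether this is considerably more work than the paper's one-line monotonicity bound. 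Finally, in your $q=1$ paragraph the bound ``$W(\rho)\lesssim \rho^2/\lambda(s)^2$'' is dimensionally wrong; what you actually need is $W/u(0,s)\lesssim [f_0F_0/u](u(0,s))\cdot \rho^2/\lambda(s)^2\to 0$, which is precisely the paper's computation specialized to $q=1$.
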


\begin{proof}
  By (A2),
  we can take $f_0 \in C^1([0,\infty))$ 
  satisfying \eqref{eq:1.7}.
  Then we may assume that
  \begin{equation}
    \label{eq:5.14}
    \frac{f_0(\eta)}{2} \le f(\eta) \le 2f_0(\eta),
    \qquad
    \frac{F_0(\eta)}{2} \le F(\eta) \le 2F_0(\eta)
    \quad \mbox{for} \quad \eta>0.
  \end{equation}
  Indeed, if a positive number $\eta_0$ is taken so large that
  \begin{equation*}
    \frac{1}{2} \le \frac{f_0(\eta)}{f(\eta)} \le 2
    \quad \mbox{for} \quad \eta \ge \eta_0,
  \end{equation*}  
  and a cut-off function $\zeta \in C^\infty(\mathbb{R})$ is chosen such that
  \begin{equation*}
    0 \le \zeta \le 1,
    \qquad
    \zeta(\eta)=0
    \quad \mbox{for} \quad \eta \le \eta_0,
    \qquad
    \zeta(\eta)=1
    \quad \mbox{for} \quad \eta \ge \eta_0+1,
  \end{equation*}  
  then we can easily check that \eqref{eq:5.14}, 
  as well as \eqref{eq:1.7},
  is satisfied if $f_0$ is replaced by $\zeta f_0 +(1-\zeta) f$.

  Note that \eqref{eq:1.7} implies
  \begin{equation}
    \label{eq:5.15}
    \lim_{\eta \to \infty} f_0'(\eta)=\infty.
  \end{equation}
  This particularly shows that 
  $f_0$ is non-decreasing on $[\eta_1,\infty)$ for some $\eta_1 \in (0,\infty)$, 
  namely
  \begin{equation*}
    f_0(\eta) \le f_0(\tilde \eta)
    \quad \mbox{if} \quad 
    \tilde \eta \ge \eta \ge \eta_1.
  \end{equation*}
  Moreover, since \eqref{eq:5.15} gives $f_0(\eta) \to \infty$ as $\eta \to \infty$,
  there is $\eta_2 \in [\eta_1,\infty)$ such that  
  \begin{equation*}
    f_0(\tilde \eta) \ge \max_{\eta \in [0,\eta_1]} f_0(\eta)
    \quad \mbox{if} \quad 
    \tilde \eta \ge \eta_2.
  \end{equation*}
  Combining the above,
  we deduce that
  \begin{equation}
    \label{eq:5.16}
    f_0(\eta) \le f_0(\tilde \eta)
    \quad \mbox{if} \quad 
    \tilde \eta \ge \eta_2, \,\,\, \tilde \eta \ge \eta.
  \end{equation}
  We take $\tilde T_0 \in [T_0,T)$ such that $u(0,s) \ge \eta_2$ for all $s \in [\tilde T_0,T)$.
  Such $\tilde T_0$ indeed exists by the assumption \eqref{eq:5.6}.
  Using \eqref{eq:5.14} and then applying \eqref{eq:5.16} for $\tilde \eta=u(0,s)$,
  we have
  \begin{equation*}
    \int_{u(z,s)}^{u(0,s)} f(\eta)\, d\eta 
    \le
    2\int_{u(z,s)}^{u(0,s)} f_0(\eta)\, d\eta 
    \le
    2f_0(u(0,s))(u(0,s)-u(z,s))
  \end{equation*}
  for all $(z,s) \in B_R \times [\tilde T_0,T)$.
  Hence we see from Lemma~\ref{lemma:5.4} that
  \begin{equation*}
    \frac{1}{2}|\nabla u(z,s)| 
    \le
    f_0(u(0,s))^{\frac{1}{2}}(u(0,s)-u(z,s))^{\frac{1}{2}} 
  \end{equation*}
  if $s \in [{\max \{ \tilde T_0,T_1\}},T)$.
  We put $z=\xi x/|x|$ ($\xi \in (0,R)$, $x \in B_R$) and plug the inequality
  \begin{equation}
    \label{eq:5.17}
    -\frac{d}{d\xi} u
    \left(
    \xi \frac{x}{|x|},s
    \right)
    \le
    \left|
    \nabla u
    \left(
    \xi \frac{x}{|x|},s
    \right)
    \right|
  \end{equation}
  into the left-hand side of the above inequality.
  Then we have
  \begin{equation*}
    \frac{d}{d\xi}
    \left(
    u(0,s)-u
    \left(
    \xi \frac{x}{|x|},s
    \right)
    \right)^{\frac{1}{2}}
    \le
    f_0(u(0,s))^{\frac{1}{2}}.
  \end{equation*}
  Integrating this with respect to $\xi$ over $[0,|x|]$ and squaring the result give
  \begin{equation}
    \label{eq:5.18}
    u(0,s)-u(x,s)
    \le
    f_0(u(0,s)) |x|^2.
  \end{equation}
  Applying this with $(x,s)=(\lambda(t)y,t+\lambda(t)^2\tau)$,
  we obtain
  \begin{align}
    \label{eq:5.19}
    \begin{aligned}
      \left.
      \frac{u(x,s)}{u(0,s)}
      \right|_{(x,s)=(\lambda(t)y,t+\lambda(t)^2\tau)}
      &\ge 
      \left.
      1-\frac{f_0(u(0,s)|x|^2}{u(0,s)}
      \right|_{(x,s)=(\lambda(t)y,t+\lambda(t)^2\tau)}
      \\
      &=1 -
      \left.
      \frac{f_0(u(0,s)) F(u(0,s))}{u(0,s)}
      \right|_{s=t+\lambda(t)^2\tau}
      \cdot
      \frac{\lambda(t)^2 |y|^2}{\lambda(t+\lambda(t)^2\tau)^2}
      \\
      &\ge 1 -
      \left.
      \frac{2f_0(u(0,s)) F_0(u(0,s))}{u(0,s)}
      \right|_{s=t+\lambda(t)^2\tau}
      \cdot
      \frac{|y|^2}{1-|\tau|},
    \end{aligned}
  \end{align}
  where we have used \eqref{eq:5.9} and \eqref{eq:5.14} in the last inequality.
  Since L'Hospital's rule yields
  \begin{equation}
    \label{eq:5.20}
    \lim_{\eta \to \infty} \frac{f_0(\eta) F_0(\eta)}{\eta}
    =\lim_{\eta \to \infty} \frac{d}{d\eta} f_0(\eta) F_0(\eta)
    =\lim_{\eta \to \infty} (f_0'(\eta) F_0(\eta)-1)
    =q-1,
  \end{equation}
  we see that the right-hand side of \eqref{eq:5.19} converges to $1-2(q-1)|y|^2/(1-|\tau|)$
  locally uniformly for $(y,\tau) \in \mathbb{R}^N \times (-1,1)$ as $t \to T$.
  Thus \eqref{eq:5.13} holds, as claimed.
\end{proof}

We need an additional lemma to prove Proposition~\ref{proposition:5.1} in the case $q>1$.
\begin{lemma}
  \label{lemma:5.6}
  Suppose that \textup{(A2)} holds for some $q>1$.
  Then there exist positive constants 
  $C$, $\alpha_1$ and $\alpha_2$ with $\alpha_2>\alpha_1$ such that
  \begin{equation*}
    \frac{\eta}{C} \le f(\eta)F(\eta) \le C\eta,
    \qquad
    \left(
    \frac{\eta_2}{\eta_1}
    \right)^{\alpha_1}
    \le 
    \frac{F(\eta_1)}{F(\eta_2)}
    \le 
    \left(
    \frac{\eta_2}{\eta_1}
    \right)^{\alpha_2}
    \quad \mbox{if} \quad \eta_2 \ge \eta_1 \ge 1.
  \end{equation*}
\end{lemma}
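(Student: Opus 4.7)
The plan is to reduce both inequalities to a single asymptotic statement about $f(\eta)F(\eta)/\eta$, essentially already computed in \eqref{eq:5.20}. First I would show $\lim_{\eta \to \infty} F(\eta)/F_0(\eta) = 1$. Since both $F$ and $F_0$ tend to $0$ at infinity, L'H\^opital gives
\[
\lim_{\eta \to \infty} \frac{F(\eta)}{F_0(\eta)} = \lim_{\eta \to \infty} \frac{F'(\eta)}{F_0'(\eta)} = \lim_{\eta \to \infty} \frac{f_0(\eta)}{f(\eta)} = 1
\]
by the third condition in \eqref{eq:1.7}. Combining this with $f/f_0 \to 1$ and \eqref{eq:5.20} yields
\[
\lim_{\eta \to \infty} \frac{f(\eta)F(\eta)}{\eta} = q - 1 > 0,
\]
where positivity crucially uses the standing assumption $q > 1$.

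The first inequality then follows quickly: the function $\eta \mapsto f(\eta)F(\eta)/\eta$ is continuous and strictly positive on $[1,\infty)$ (by \eqref{eq:1.2} and positivity of $f$) with a positive limit at infinity, so it is bounded between two positive constants on $[1,\infty)$. Enlarging the resulting constant to some $C > 1$ gives $\eta/C \le f(\eta)F(\eta) \le C\eta$ for all $\eta \ge 1$. For the second inequality, I would use the identity $\frac{d}{d\eta} \log F(\eta) = F'(\eta)/F(\eta) = -1/(f(\eta)F(\eta))$, which combined with the first inequality yields
\[
-\frac{C}{\eta} \le \frac{d}{d\eta} \log F(\eta) \le -\frac{1}{C\eta} \quad \text{for } \eta \ge 1.
\]
Integrating from $\eta_1$ to $\eta_2$ with $1 \le \eta_1 \le \eta_2$ and exponentiating gives $(\eta_2/\eta_1)^{1/C} \le F(\eta_1)/F(\eta_2) \le (\eta_2/\eta_1)^C$, so the choices $\alpha_1 := 1/C$ and $\alpha_2 := C$ satisfy $\alpha_2 > \alpha_1$ and complete the lemma.

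The main conceptual point is the role of the assumption $q > 1$. If instead $q = 1$, the computation \eqref{eq:5.20} would only yield $f(\eta)F(\eta)/\eta \to 0$, as seen in the borderline example $f(\eta) = e^\eta$ (for which $F(\eta) = e^{-\eta}$ and $fF \equiv 1$), and the lower bound $fF \ge \eta/C$ in the lemma would fail. Beyond this dichotomy, the argument is essentially L'H\^opital plus a one-line integration of the ODE satisfied by $\log F$, and I do not anticipate any real technical obstacle.
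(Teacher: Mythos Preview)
Your proof is correct and follows essentially the same route as the paper: establish that $f(\eta)F(\eta)$ is comparable to $\eta$ on $[1,\infty)$, then integrate the identity $(\log F)'=-1/(fF)$. The only cosmetic difference is that the paper reaches the first bound via the comparison \eqref{eq:5.14} between $f,F$ and $f_0,F_0$ (transferring the bound from $f_0F_0$), whereas you compute the limit $f(\eta)F(\eta)/\eta\to q-1$ directly by first showing $F/F_0\to 1$ with L'H\^opital; both lead to the same conclusion with the same effort.
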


\begin{proof}
  We take $f_0$ satisfying \eqref{eq:1.7} and \eqref{eq:5.14}.
  Then the assumption $q>1$ shows that the limit computed in \eqref{eq:5.20} is positive.
  Hence we can take positive constants $c$ and $C$ such that
  \begin{equation*}
    \frac{c}{\eta} \le \frac{1}{f_0(\eta)F_0(\eta)} \le \frac{C}{\eta}
    \quad \mbox{for} \quad
    \eta \ge 1.
  \end{equation*}
  From \eqref{eq:5.14}, 
  it follows that
  \begin{equation}
    \label{eq:5.21}
    \frac{c}{4\eta} \le \frac{1}{f(\eta)F(\eta)} \le \frac{4C}{\eta}
    \quad \mbox{for} \quad
    \eta \ge 1.
  \end{equation}  
  Plugging the equality
  \begin{equation*}
    \frac{1}{f(\eta)F(\eta)}=-\frac{d}{d\eta} \log F(\eta)
  \end{equation*}
  into \eqref{eq:5.21} and integrating the result over $[\eta_1,\eta_2]$,
  we deduce that
  \begin{equation*}
    \frac{c}{4} \log \frac{\eta_2}{\eta_1} 
    \le \log \frac{F(\eta_1)}{F(\eta_2)} 
    \le 4C \log \frac{\eta_2}{\eta_1}.
  \end{equation*}
  Therefore the lemma follows.
\end{proof}

We are now in a position to prove Proposition~\ref{proposition:5.1}. 
We discuss the cases $q=1$ and $q>1$ separately.

\begin{proof}[Proof of Proposition~\ref{proposition:5.1} for $q=1$]
  Let $\rho_0>0$ and $\tau_0 \in (0,1)$.
  Since the assumption \eqref{eq:5.6} gives
  $\lambda(t)=F(u(0,t))^{1/2} \to 0$ as $t \to T$,
  we can choose $T_2 \in [T_0,T)$ such that 
  $R/\lambda(t) \ge \rho_0$, $t/\lambda(t) \ge \tau_0$ for $t \in [T_2,T)$.
  This with \eqref{eq:5.9} shows that \eqref{eq:5.7} holds for $t \in [T_2,T)$.  

  Let us derive the lower estimate of $v^{t}$ in \eqref{eq:5.8}.
  From the assumption \eqref{eq:1.8}
  and the fact that $F$ is monotonically decreasing,
  we have
  \begin{equation*}
    \frac{F(u(x,s))}{F(u(0,s))} \ge 1
    \quad \mbox{for} \quad (x,s) \in B_{R} \times [T_0,T).
  \end{equation*}
  By definition,
  $v^{t}(y,\tau)$ is written as 
  \begin{align}
    \notag
    v^{t}(y,\tau) 
    &=\frac{\lambda(t+\lambda(t)^2 \tau)^2}{\lambda(t)^2}
    \cdot 
    \frac{F\big( u(\lambda (t)y,t+\lambda (t)^2 \tau) \big)}{\lambda(t+\lambda(t)^2 \tau)^2}
    \\
    \label{eq:5.22}
    &=\frac{\lambda(t+\lambda(t)^2 \tau)^2}{\lambda(t)^2}
    \cdot \left.
    \frac{F(u(x,s))}{F(u(0,s))}
    \right|_{(x,s)=(\lambda(t)y,t+\lambda(t)^2 \tau)}.
  \end{align}
  Combining the above with \eqref{eq:5.9},
  we find that
  \begin{equation*}
    v^{t}(y,\tau) \ge 1-\tau_0
    \quad \mbox{for} \quad (y,\tau) \in B_{R/\lambda(t)} \times [-\tau_0,\tau_0].
  \end{equation*}
  This is valid under the conditions $t \ge T_0$ and $t+\lambda(t)^2 \tau \ge T_0$,
  which are satisfied if $t$ is close to $T$.
  Thus we obtain the desired lower estimate.

  Next we consider the upper estimate of $v^{t}$.
  Take $f_0$ satisfying \eqref{eq:1.7} and \eqref{eq:5.14},
  and put
  \begin{equation*}
    m(t):=\inf_{|y| \le \rho_0, |\tau| \le \tau_0} 
    u(\lambda(t)y,t+\lambda(t)\tau),
    \qquad
    \kappa(t):=\sup_{\eta \in [m(t),\infty)} f_0'(\eta)F_0(\eta). 
  \end{equation*}
  Then we see from \eqref{eq:5.6}, \eqref{eq:5.13} 
  and the assumption $q=1$ that
  \begin{equation*}
    \liminf_{t \to \infty} m(t)
    =\liminf_{t \to \infty} \inf_{|y| \le \rho_0, |\tau| \le \tau_0} 
    u(0,t+\lambda(t)^2\tau) \cdot 
    \frac{u(\lambda(t)y,t+\lambda(t)^2\tau)}{u(0,t+\lambda(t)^2\tau)}
    =\infty.
  \end{equation*}
  Hence
  \begin{equation}
    \label{eq:5.23}
    \lim_{t \to T} \kappa (t)=\lim_{\eta \to \infty} f_0'(\eta)F_0(\eta)=1.
  \end{equation}
  By definition, we have
  \begin{equation}
    \label{eq:5.24}
    f_0'(\eta)F_0(\eta) \le \kappa(t)
    \quad \mbox{if} \quad \eta \ge m(t).
  \end{equation}
  This means that 
  the function $\eta \mapsto f_0(\eta)F_0(\eta)^{\kappa(t)}$ 
  is non-increasing on $[m(t),\infty)$,
  since
  \begin{equation*}
    \frac{d}{d\eta}(f_0(\eta)F_0(\eta)^{\kappa(t)})
    =F_0(\eta)^{\kappa(t)-1}(f_0'(\eta)F_0(\eta)-\kappa(t)).
  \end{equation*}
  From this and \eqref{eq:5.14},
  we find that
  \begin{equation*}
    \frac{f(\eta_2)F(\eta_2)^{\kappa(t)}}{f(\eta_1)F(\eta_1)^{\kappa(t)}}
    \le 4^{1+\kappa(t)} \cdot 
    \frac{f_0(\eta_2)F_0(\eta_2)^{\kappa(t)}}{f_0(\eta_1)F_0(\eta_1)^{\kappa(t)}}
    \le 4^{1+\kappa(t)}
    \quad \mbox{if} \quad \eta_2 \ge \eta_1 \ge m(t).
  \end{equation*}
  Therefore, 
  for $a,b \in \mathbb{R}$ with $b \ge a \ge m(t)$,
  \begin{align*}
    \int_a^b f(\eta)\, d\eta 
    &=
    f(a)^2 F(a)^{2\kappa(t)} 
    \int_{a}^{b} 
    \left( 
    \frac{f(\eta)F(\eta)^{\kappa(t)}}{f(a)F(a)^{\kappa(t)}}
    \right)^2
    \cdot \frac{1}{f(\eta)F(\eta)^{2\kappa(t)}}
    \, d\eta 
    \\
    &\le 
    4^{2(1+\kappa(t))} f(a)^2 F(a)^{2\kappa(t)} 
    \int_{a}^{b}
    \frac{1}{f(\eta)F(\eta)^{2\kappa(t)}}
    \, d\eta 
    \\
    &=
    \frac{4^{2(1+\kappa(t))} f(a)^2 F(a)^{2\kappa(t)}}{2\kappa(t)-1}
    \left(
    \frac{1}{F(b)^{2\kappa(t)-1}} -\frac{1}{F(a)^{2\kappa(t)-1}}
    \right),
  \end{align*}
  where we have used the fact that 
  $(2\kappa(t) -1)/(f_0(\eta) F_0(\eta)^{2\kappa(t)})=(1/F_0(\eta)^{2\kappa(t)-1})'$
  in deriving the last equality.
  Setting $a=u(z,s)$ and $b=u(0,s)$ in the above,
  and then plugging the result into \eqref{eq:5.11},
  we obtain
  \begin{equation}
    \label{eq:5.25}
    \frac{(\kappa(t)-1/2)^{1/2}|\nabla u(z,s)|}{4^{1+\kappa(t)} f(u(z,s))F(u(z,s))^{\kappa(t)}} 
    \left(
    \frac{1}{\lambda (s)^{2(2\kappa(t)-1)}} -\frac{1}{F(u(z,s))^{2\kappa(t)-1}}
    \right)^{-1/2}
    \le 1,
  \end{equation}
  provided $u(z,s) \ge m(t)$. 
  As in the derivation of \eqref{eq:5.18},
  we put $z=\xi x/|x|$,
  then apply \eqref{eq:5.17},
  and finally integrate the resulting inequality with respect to $\xi$ over $[0,|x|]$.
  The result is
  \begin{equation}
    \label{eq:5.26}
    \int_{u(x,s)}^{u(0,s)} \frac{4^{1+\kappa(t)} (\kappa(t)-1/2)^{1/2}}{f(\eta)F(\eta)^{\kappa(t)}} 
    \left(
    \frac{1}{\lambda(s)^{2(2\kappa(t)-1)}} -\frac{1}{F(\eta)^{2\kappa(t)-1}}
    \right)^{-1/2}
    \, d\eta
    \le |x|.
  \end{equation}
  Using the change of variables $\zeta=\lambda(s)^{-2}F(\eta)$,
  we arrive at
  \begin{equation}
    \label{eq:5.27}
    \psi_{\kappa(t)}
    \left(
    \frac{F(u(x,s))}{\lambda(s)^2}
    \right)
    \le \frac{|x|}{\lambda(s)}
    \quad\mbox{if}\quad 
    u(z,s) \ge m(t),
  \end{equation}
  where
  \begin{equation}
    \label{eq:5.28}
    \psi_\gamma (\xi):=\int_1^\xi
    \frac{4^{1+\gamma}(\gamma-1/2)^{1/2}}{\{ \zeta(\zeta^{2\gamma-1} -1)\}^{1/2}}
    \, d\zeta
    \quad \mbox{for} \quad 
    \gamma>\frac{1}{2}, \, \xi \ge 1.
  \end{equation}
  By the definition of $m(t)$, we see that the condition $u(x,s) \ge m(t)$ is satisfied
  if $(x,s)=(\lambda(t)y,t+\lambda(t)^2 \tau)$, $|y| \le \rho_0$, $|\tau| \le \tau_0$.
  Hence putting $(x,s)=(\lambda(t)y,t+\lambda(t)^2 \tau)$ in \eqref{eq:5.27} yields
  \begin{equation}
    \label{eq:5.29}
    \psi_{\kappa(t)}
    \left(
    \frac{F(u(\lambda(t)y,t+\lambda(t)^2 \tau))}{\lambda(t+\lambda(t)^2 \tau)^2}
    \right)
    \le \frac{\lambda(t)|y|}{\lambda(t+\lambda(t)^2 \tau)}
    \quad\mbox{if}\quad 
    |y| \le \rho_0, \,\,\, |\tau| \le \tau_0.
  \end{equation}
  Since $\psi_\gamma'(\xi)>0$ for $\xi>1$, 
  we see that $\psi_\gamma$ has an inverse with the domain $[0,\rho_\gamma)$,
  where
  \begin{equation*}
    \rho_\gamma :=\sup_{\xi \ge 1} \psi_\gamma (\xi) 
    =\int_1^\infty
    \frac{4^{1+\gamma}(\gamma-1/2)^{1/2}}{\{ \zeta(\zeta^{2\gamma-1} -1)\}^{1/2}}
    \, d\zeta.
  \end{equation*}
  By \eqref{eq:5.23} and Fatou's lemma, 
  we obtain 
  \begin{gather*}
    \liminf_{t \to T} \rho_{\kappa(t)}
    \ge 
    \int_1^\infty
    \frac{16 \cdot 2^{-1/2}}{\{ \zeta(\zeta -1)\}^{1/2}}
    \, d\zeta
    =\infty.
  \end{gather*}
  This shows that the domain of the inverse $\psi_{\kappa(t)}^{-1}$ 
  contains the interval $[0,\rho_0/(1-\tau_0)^{1/2}]$ if $t$ is close to $T$.
  Hence, from \eqref{eq:5.9} and \eqref{eq:5.29},
  we conclude that
  \begin{align*}
    \sup_{|y| \le \rho_0, |\tau| \le \tau_0}
    \frac{F(u(\lambda(t)y,t+\lambda(t)^2 \tau))}{\lambda(t+\lambda(t)^2 \tau)^2}
    &\le 
    \sup_{|y| \le \rho_0, |\tau| \le \tau_0} 
    \psi_{\kappa(t)}^{-1} 
    \left(
    \frac{\lambda(t)|y|}{\lambda(t+\lambda(t)^2 \tau)}
    \right)
    \\
    &\le
    \psi_{\kappa(t)}^{-1} 
    \left(
    \frac{\rho_0}{(1-\tau_0)^{1/2}}
    \right)
    \to
    \psi_1^{-1}
    \left(
    \frac{\rho_0}{(1-\tau_0)^{1/2}}
    \right)
    \quad\mbox{as}\quad t \to T.
  \end{align*}
  Combining this with \eqref{eq:5.9} and \eqref{eq:5.22},
  we obtain
  \begin{equation}
    \label{eq:5.30}
    \limsup_{t \to T} \sup_{|y| \le \rho_0, |\tau| \le \tau_0} 
    v^{t}(y,\tau)
    \le K_0:=(1+\tau_0) \psi_1^{-1}
    \left(
    \frac{\rho_0}{(1-\tau_0)^{1/2}}
    \right).
  \end{equation}
  We have thus shown the upper estimate of $v^{t}$.
  
  It remains to derive the bound of $|\nabla v^{t}|$.
  Note that
  \begin{equation*}
    |\nabla v^{t}(y,\tau)|
    =\frac{1}{\lambda(t)} \cdot
    \left.
    \frac{|\nabla_x u(x,s)|}{f(u(x,s))}
    \right|_{(x,s)=(\lambda (t)y,t+\lambda (t)^2 \tau)}.
  \end{equation*}
  Applying \eqref{eq:5.25} to the right-hand side of the above equality, 
  we have
  \begin{align*}
    |\nabla v^{t}(y,\tau)|
    &\le 
    \left.
    \frac{4^{1+\kappa(t)}F(u(x,s))^{\kappa(t)}}{(\kappa(t)-1/2)^{1/2}\lambda(t)}
    \left(
    \frac{1}{\lambda (s)^{2(2\kappa(t)-1)}} -\frac{1}{F(u(x,s))^{2\kappa(t)-1}}
    \right)^{1/2}
    \right|_{(x,s)=(\lambda (t)y,t+\lambda (t)^2 \tau)}
    \\
    &\le 
    \left.
    \frac{4^{1+\kappa(t)}F(u(x,s))^{\kappa(t)}}{(\kappa(t)-1/2)^{1/2}\lambda(t)}
    \cdot \frac{1}{\lambda (s)^{2\kappa(t)-1}}
    \right|_{(x,s)=(\lambda (t)y,t+\lambda (t)^2 \tau)}
    \\
    &=
    \frac{4^{1+\kappa(t)}}{(\kappa(t)-1/2)^{1/2}} \cdot 
    \left(
    \frac{\lambda(t)}{\lambda(t+\lambda (t)^2 \tau)}
    \right)^{2\kappa(t)-1}
    \cdot v^{t}(y,\tau)^{\kappa(t)}.
  \end{align*}
  This together with \eqref{eq:5.9} and \eqref{eq:5.30} gives
  \begin{equation}
    \label{eq:5.31}
    \limsup_{t \to T} \sup_{|y| \le \rho_0, |\tau| \le \tau_0} 
    |\nabla v^{t}(y,\tau)| 
    \le 
    \left.
    \frac{4^{1+\gamma}}{(\gamma-1/2)^{1/2}} \cdot 
    \left( 
    \frac{1}{1-\tau_0} 
    \right)^{\gamma-1/2} 
    \cdot K_0^{\gamma}
    \, \right|_{\gamma=1}.
  \end{equation}  
  Thus \eqref{eq:5.8} is verified,
  and the proof is complete.
\end{proof}

Let us proceed to the case $q>1$.

\begin{proof}[Proof of Proposition~\ref{proposition:5.1} for $q>1$]
  We omit the derivation of \eqref{eq:5.7} and the lower estimate of $v^{t}$,
  because it can be done in the same way as in the case $q=1$.

  Let us consider the upper estimate of $v^{t}$.
  Fix $\rho_0>0$ and $\tau_0 \in (0,1)$.
  %
  We introduce a rescaled function $w^{t}$ defined by 
  \begin{equation*}
    w^{t}(y,\tau):=\frac{u(\lambda (t)y,t+\lambda (t)^2 \tau)}{u(0,t)}
    \quad \mbox{for} \quad 
    (y,\tau) \in B_{R/\lambda(t)} \times I^t,
  \end{equation*}
  and check that $w^{t}$ satisfies
  \begin{gather}
    \label{eq:5.32}
    \partial_\tau w^t  \ge \Delta_y w^t
    \quad \mbox{in} \quad B_{R/\lambda(t)} \times I^t,
    \\
    \label{eq:5.33}
    \underline{w}_0 
    :=\liminf_{t \to T} \inf_{|y| \le \rho_*, |\tau|\le \tau_1} w^t(y,\tau)>0,
  \end{gather}
  where $\tau_1$ is a fixed number satisfying $\tau_0<\tau_1<1$
  and $\rho_*$ is defined by
  \begin{equation*}
    \rho_*:=\frac{(1-\tau_1)^{1/2}}{2(q-1)^{1/2}}.
  \end{equation*}
  The inequality \eqref{eq:5.32} is easily obtained by the fact that $\partial_t u -\Delta u =f(u) \ge 0$.
  Let us derive \eqref{eq:5.33}.
  We apply Lemma~\ref{lemma:5.5} to obtain
  \begin{equation}
    \label{eq:5.34}
    \liminf_{t \to T} \inf_{|y| \le \rho_*, |\tau| \le \tau_1} 
    \frac{u(\lambda(t)y,t+\lambda(t)^2\tau)}{u(0,t+\lambda(t)^2\tau)}
    \ge 1 -\frac{2(q-1)\rho_*^2}{1-\tau_1}=\frac{1}{2}.
  \end{equation}
  Since \eqref{eq:5.6} gives $u(0,t+\lambda(t)^2 \tau) \to \infty$ 
  uniformly for $\tau \in [-\tau_1,0)$ as $t \to T$,
  we see that $u(0,t+\lambda(t)^2 \tau) \ge 1$ 
  for all $\tau \in [-\tau_1,0)$ and $t$ close to $T$.
  This shows that Lemma~\ref{lemma:5.6} can be applied for 
  $(\eta_1,\eta_2)=(u(0,t+\lambda(t)^2 \tau),u(0,t))$ 
  or $(\eta_1,\eta_2)=(u(0,t),u(0,t+\lambda(t)^2 \tau))$.
  We hence have
  \begin{equation*}
    \begin{aligned}
      \frac{u(0,t+\lambda(t)^2 \tau)}{u(0,t)} 
      &\ge
      \min\left\{
        \left( 
          \frac{F(u(0,t))}{F(u(0,t+\lambda(t)^2 \tau))}
        \right)^{1/\alpha_1}, 
        \left( 
          \frac{F(u(0,t))}{F(u(0,t+\lambda(t)^2 \tau))}
        \right)^{1/\alpha_2}
      \right\}
      \\
      &=
      \min\left\{
        \left( 
          \frac{\lambda(t)^2}{\lambda(t+\lambda(t)^2 \tau)^2}
        \right)^{1/\alpha_1},
        \left( 
          \frac{\lambda(t)^2}{\lambda(t+\lambda(t)^2 \tau)^2}
        \right)^{1/\alpha_2}
      \right\}
      \\
      &\ge 
      \min\left\{
        \frac{1}{(1+|\tau|)^{1/\alpha_1}},
        \frac{1}{(1+|\tau|)^{1/\alpha_2}}
      \right\}
      =\frac{1}{(1+|\tau|)^{1/\alpha_1}},
    \end{aligned}
  \end{equation*}
  where we have used \eqref{eq:5.9} in the second inequality.
  Combining this with \eqref{eq:5.34},
  we deduce that
  \begin{align*}
    \liminf_{t \to T} \inf_{|y| \le \rho_*, |\tau|\le \tau_1} w^{t}(y,\tau) 
    &=\liminf_{t \to T} \inf_{|y| \le \rho_*, |\tau|\le \tau_1}
    \frac{u(0,t+\lambda(t)^2 \tau)}{u(0,t)} 
    \cdot \frac{u(\lambda (t)y,t+\lambda(t)^2 \tau)}{u(0,t+\lambda(t)^2 \tau)}
    \\
    &\ge \frac{1}{2(1+\tau_1)^{1/\alpha_1}}.
  \end{align*}
  Therefore \eqref{eq:5.33} is verified.
    
  Now we take a function $\zeta \in C^\infty([-\tau_1,\infty))$ satisfying
  \begin{equation*}
    \zeta(-\tau_1)=0,
    \qquad
    0<\zeta(\tau) \le \frac{1}{2}\underline{w}_0
    \quad \mbox{for} \quad \tau>-\tau_1,
  \end{equation*}
  and consider a solution $\underline{w}$ of the problem
  \begin{equation*}
    \left\{
    \begin{aligned}
      &\partial_\tau \underline{w} =\Delta_y \underline{w},
      &&(y,\tau) \in (B_{\rho_0+1} \setminus \overline{B_{\rho_*}}) 
      \times (-\tau_1,\infty), 
      \\
      &\underline{w}=0,
      &&(y,\tau) \in \partial B_{\rho_0+1} \times (-\tau_1,\infty), 
      \\
      &\underline{w}=\zeta(\tau),
      &&(y,\tau) \in \partial B_{\rho_*} \times (-\tau_1,\infty), 
      \\
      &\underline{w}|_{\tau=-\tau_1}=0,
      &&y \in \overline{B_{\rho_0+1}} \setminus B_{\rho_*}.
    \end{aligned}
    \right.
  \end{equation*}
  By \eqref{eq:5.33}, 
  we see that if $t$ is close to $T$,
  \begin{equation*}
    w^t(y,\tau) \ge \frac{1}{2}\underline{w}_0 \ge \zeta(\tau) =\underline{w}(y,\tau)
    \quad \mbox{for} \quad 
    (y,\tau) \in \partial B_{\rho_*} \times [-\tau_1,\tau_1].
  \end{equation*}
  This with the positivity of $w^t$ yields $w^{t} \ge \underline{w}$ 
  on the parabolic boundary of 
  $(B_{\rho_0+1} \setminus \overline{B_{\rho_*}}) \times [-\tau_1,\tau_1]$. 
  Hence we see from the maximum principle that
  \begin{equation*}
    w^{t}(y,\tau) \ge \underline{w} (y,\tau)
    \quad \mbox{for} \quad 
    (y,\tau) \in (\overline{B_{\rho_0+1}} \setminus B_{\rho_*}) \times [-\tau_1,\tau_1].
  \end{equation*}
  Since the strong maximum principle shows that $\underline{w}$ is positive 
  on $(B_{\rho_0+1} \setminus B_{\rho_*}) \times (-\tau_1,\infty)$,
  we infer that
  \begin{equation*}
    \inf_{\rho_* \le |y| \le \rho_0, |\tau|\le \tau_0} w^{t}(y,\tau) 
    \ge \inf_{\rho_* \le |y| \le \rho_0, |\tau|\le \tau_0} \underline{w}(y,\tau)
    >0.
  \end{equation*}
  From this and \eqref{eq:5.33}, 
  we conclude that
  \begin{equation}
    \label{eq:5.35}
    \liminf_{t \to T} \inf_{|y| \le \rho_0, |\tau|\le \tau_0} w^{t}(y,\tau)>0.
  \end{equation}

  We see from \eqref{eq:5.35} that
  \begin{equation*}
    u(\lambda(t)y,t+\lambda(t)^2\tau)
    =w^t(y,\tau) \cdot u(0,t) \to \infty
  \end{equation*}
  uniformly for $(y,\tau) \in \overline{B_{\rho_0}} \times [-\tau_0,\tau_0]$ as $t \to T$.
  In particular, if $t$ is close to $T$, 
  then $u(\lambda(t)y,t+\lambda(t)^2\tau) \ge 1$
  for all $(y,\tau) \in \overline{B_{\rho_0}} \times [-\tau_0,\tau_0]$.
  Hence we can apply Lemma~\ref{lemma:5.6} 
  for $(\eta_1,\eta_2)=(u(\lambda(t)y,t+\lambda(t)^2\tau),u(0,t))$ 
  or for $(\eta_1,\eta_2)=(u(0,t),u(\lambda(t)y,t+\lambda(t)^2\tau))$
  to obtain
  \begin{equation*}
    v^t(y,\tau) 
    \le 
    \max\left\{ w^t(y,\tau)^{-\alpha_1}, w^t(y,\tau)^{-\alpha_2} \right\}
    \quad \mbox{for} \quad
    (y,\tau) \in \overline{B_{\rho_0}} \times [-\tau_0,\tau_0].
  \end{equation*}
  The upper estimate of $v^t$ in \eqref{eq:5.8} thus follows from \eqref{eq:5.35}.
  
  What is left is to derive the estimate for $|\nabla v^{t}|$.
  We observe that \eqref{eq:1.7} gives 
  $\sup\limits_{\eta \ge 1} f_0'(\eta)F_0(\eta) \le \gamma$ for some $\gamma>1$.
  This means that \eqref{eq:5.24} holds 
  if $\kappa(t)$ and $m(t)$ are replaced by $\gamma$ and $1$, respectively.
  Hence the inequality \eqref{eq:5.25}, 
  with $\kappa(t)$ replaced by $\gamma$,
  is valid under the condition
  \begin{equation}
    \label{eq:5.36}
    u(z,s) \ge 1.
  \end{equation}
  From \eqref{eq:5.6}, \eqref{eq:5.8} and the definition of $v^t$, 
  we see that
  \begin{equation*}
    F(u(\lambda(t)y,t+\lambda(t)^2\tau))
    =v^t(y,\tau) \cdot F(u(0,t)) \to 0
  \end{equation*}
  uniformly for $(y,\tau) \in \overline{B_{\rho_0}} \times [-\tau_0,\tau_0]$
  as $t \to T$.
  This shows that the condition \eqref{eq:5.36},
  which is equivalent to $F(u(z,s)) \le F(1)$,
  is satisfied if $(z,s)=(\lambda (t)y,t+\lambda (t)^2 \tau)$
  $|y| \le \rho_0$, $|\tau| \le \tau_0$ and $t$ is close to $T$.
  Hence, by the same argument as in the derivation of \eqref{eq:5.31},
  we obtain
  \begin{equation*}
    \limsup_{t \to T} \sup_{|y| \le \rho_0, |\tau| \le \tau_0} 
    |\nabla v^{t}(y,\tau)| 
    \le 
    \frac{4^{1+\gamma}}{(\gamma-1/2)^{1/2}} \cdot 
    \left( 
    \frac{1}{1-\tau_0} 
    \right)^{\gamma-1/2} 
    \cdot K_0^{\gamma}.
  \end{equation*}  
  We have thus shown \eqref{eq:5.8},
  and the proof is complete.
\end{proof}

\subsection{Proof of Proposition~\ref{proposition:2.1}}

We conclude the paper by proving Proposition~\ref{proposition:2.1}.

\begin{proof}[Proof of Proposition~\ref{proposition:2.1}]
  We prove the proposition by contradiction.
  Contrary to our claim,
  suppose that
  \begin{equation}
    \label{eq:5.37}
    \liminf_{t\to T} \sup_{s \in (t,T)} 
    \frac{F(M(t))-F(M(s))}{s-t}=0
    \quad\mbox{and}\quad 
    \limsup_{t\to T} M(t)= \infty.
  \end{equation}
  One can then take a monotonically increasing sequence $\{t_i\}_{i=1}^\infty \subset [t_1 ,T)$ 
  such that $t_i \to T$ and
  \begin{equation}
    \label{eq:5.38}
    \delta_i:=\sup_{s \in (t_i,T)} 
    \frac{F(M(t_i))-F(M(s))}{s-t_i}
    \to 0
    \quad \mbox{as} \quad
    i\to\infty.
  \end{equation}
  Moreover, since \eqref{eq:1.8} and Lemma~\ref{lemma:5.1} yield 
  \begin{equation}
    \label{eq:5.39}
    M'(t)=\partial_t u(0,t) \neq 0
    \quad \mbox{for} \quad
    t \in [T_*,T),
  \end{equation}
  by \eqref{eq:5.37} we have
  \begin{equation}
    \label{eq:5.40}
    \lim_{t \to T} u(0,t) 
    =\lim_{t \to T} M(t)= \infty.
  \end{equation}

  Let us introduce a rescaled function. 
  Set
  \begin{equation*}
    g_q(\eta):=
    \left\{
    \begin{aligned} 
      &e^\eta
      &&\mbox{if}\quad q=1,
      \\
      &\eta^{\frac{q}{q-1}}
      &&\mbox{if}\quad q>1,
    \end{aligned}
    \right.
    \qquad
    G_q(\eta):=\int_\eta^\infty \frac{1}{g_q(\zeta)}d\zeta=
    \left\{
    \begin{aligned} 
      &e^{-\eta}
      &&\mbox{if}\quad q=1,
      \\
      &(q-1)\eta^{-\frac{1}{q-1}}
      &&\mbox{if}\quad q>1,
    \end{aligned}
    \right.
  \end{equation*}
  and take $f_0$ satisfying \eqref{eq:1.7} and \eqref{eq:5.14}.
  We then define
  \begin{equation*}
    w_i(y,\tau) :=G_q^{-1}
    \left(
    \frac{F_0(u(\lambda_i y, t_i+\lambda_i^2\tau))}{\lambda_i^2}
    \right),
    \qquad
    \lambda_i :=F(u(0,t_i))^\frac{1}{2}.
  \end{equation*}  
  To derive an equation for $w_i$,
  we differentiate the relation 
  \begin{equation}
    \label{eq:5.41}
    G_q(w_i(y,\tau))=\frac{F_0(u(\lambda_i y, t_i+\lambda_i^2\tau))}{\lambda_i^2}
  \end{equation}
  with respect to $y$ and $\tau$.
  Then we have
  \begin{equation}
    \label{eq:5.42}
    G_q'(w_i)\partial_\tau w_i
    =-\frac{\partial_t u}{f_0(u)}, 
    \qquad 
    G_q'(w_i)\nabla_y w_i 
    =-\frac{\nabla_x u}{\lambda_i f_0(u)}, 
  \end{equation}
  where $(\lambda_i y,t_i+\lambda_i^2\tau )$ is substituted for $(x,t)$ in $u=u(x,t)$.
  We again differentiate the latter of \eqref{eq:5.42} with respect to $y$ to obtain
  \begin{align*}
    G_q'(w_i)\Delta_y w_i +G_q''(w_i) |\nabla_y w_i|^2
    &=-\frac{\Delta_x u}{f_0(u)}
    +\frac{f_0'(u)}{f_0(u)^2} |\nabla_x u|^2
    \\
    &=-\frac{\Delta_x u}{f_0(u)}
    +\frac{f_0'(u)F_0(u)G_q'(w_i)^2}{G_q(w_i)} |\nabla_y w_i|^2,
  \end{align*}
  where we have used \eqref{eq:5.41} and \eqref{eq:5.42}
  in deriving the last inequality.
  Since a direct computation gives
  \begin{equation*}
    \frac{G_q'(\eta)}{G_q(\eta)}
    =
    \left\{
    \begin{aligned}
    &-1
    &&\mbox{if}\quad q=1,
    \\
    &-\frac{1}{(q-1)\eta}
    &&\mbox{if}\quad q>1,
    \end{aligned}
    \right. 
    \qquad
    \frac{G_q''(\eta)}{G_q'(\eta)}
    =
    \left\{
    \begin{aligned}
    &-1
    &&\mbox{if}\quad q=1,
    \\
    &-\frac{q}{(q-1)\eta}
    &&\mbox{if}\quad q>1,
    \end{aligned}
    \right. 
  \end{equation*}
  we conclude that $w_i$ satisfies
  \begin{equation}
    \label{eq:5.43}
    \begin{aligned}
      \partial_\tau w_i -\Delta_y w_i
      &=-\frac{\partial_t u -\Delta_x u}{f_0(u)G_q'(w_i)}
      +\left(
      -\frac{f_0'(u)F_0(u)G_q'(w_i)}{G_q(w_i)} +\frac{G_q''(w_i)}{G_q'(w_i)}
      \right)
      |\nabla_y w_i|^2
      \\
      &=\left\{
      \begin{aligned}
        &\frac{f(u)}{f_0(u)} g_q(w_i) + (f_0'(u)F_0(u)-1)|\nabla_y w_i|^2
        &&\mbox{if}\quad q=1,
        \\
        &\frac{f(u)}{f_0(u)} g_q(w_i) + \frac{(f_0'(u)F_0(u)-q) |\nabla_y w_i|^2}{(q-1)w_i}
        &&\mbox{if}\quad q>1.
      \end{aligned}
      \right.
    \end{aligned} 
  \end{equation}

  To derive a contradiction, 
  we first consider the convergence of the sequence $\{w_i\}$.
  From \eqref{eq:5.14},
  we have
  \begin{equation*}
    G_q^{-1}
    \left(
    2v^{t_i}(y,\tau)
    \right)
    \le
    w_i(y,\tau) = G_q^{-1}
    \left(
    \frac{F_0(u(\lambda(t_i) y, t_i+\lambda(t_i)^2\tau))}{\lambda(t_i)^2}
    \right)
    \le 
    G_q^{-1}
    \left(
    \frac{v^{t_i}(y,\tau)}{2}
    \right),
  \end{equation*}
  where $v^t(y,\tau)$ and $\lambda(t)$ are the functions defined in the previous subsection.
  This together with Proposition~\ref{proposition:5.1} shows that
  \begin{equation}
    \label{eq:5.44}
    \limsup_{i\to\infty}
    \sup_{(y,\tau)\in \overline{B_{\rho_0}} \times [-\tau_0,\tau_0]}
    (|w_i(y,\tau)|+|w_i(y,\tau)^{-1}|+|\nabla w_i(y,\tau)|) < \infty 
  \end{equation}
  for any $\rho_0>0$ and $\tau_0 \in (0,1)$.
  By a regularity theory for parabolic equations (see for example \cite{LSU}), 
  we see that $\{w_i\}$ is relatively compact 
  in $C(\overline{B_{\rho_0}} \times [-\tau_0,\tau_0])$.
  Therefore there exist a subsequence of $\{w_i\}$,
  which is still denoted by $\{w_i\}$, 
  and a radially symmetric function $w \in C(\mathbb{R}^N \times (-1,1))$ 
  such that
  \begin{equation}
    \label{eq:5.45}
    w_i \to w
    \quad \mbox{in} \quad C_{\operatorname{loc}}(\mathbb{R}^N \times (-1,1))
  \end{equation}
  as $i \to \infty$.
  
  Next we examine properties of $w$.
  Notice that
  \begin{equation}
    \label{eq:5.46}
    u(\lambda_i y,t_i+\lambda_i^2 \tau) \to \infty
    \quad \mbox{locally uniformly for} \,\,\, 
    (y,\tau) \in \mathbb{R}^N \times (-1,1)
    \,\,\, \mbox{as} \,\,\, 
    i \to \infty.
  \end{equation}
  Indeed, this is shown by writing
  \begin{equation*}
    u(\lambda_i y,t_i+\lambda_i^2 \tau)
    =F^{-1} \left(
    v^{t_i}(y,\tau) F(u(0,t_i))
    \right)
  \end{equation*}
  and then applying \eqref{eq:5.40} and Proposition~\ref{proposition:5.1}.
  Hence, letting $i \to \infty$ in \eqref{eq:5.43} 
  and using \eqref{eq:1.7}, \eqref{eq:5.44}, \eqref{eq:5.45} and \eqref{eq:5.46},
  we find that $w$ satisfies
  \begin{equation}
    \label{eq:5.47}
    \qquad
    \partial_\tau w -\Delta_y w=g_q(w)
  \end{equation}
  in the distributional sense.
  Since $g_q$ is smooth away from zero,
  we see from a regularity theory for parabolic equations 
  that $w$ is smooth and satisfies \eqref{eq:5.47} in the classical sense.
  We now show that $w$ also satisfies
  \begin{equation}
    \label{eq:5.48}
    \partial_\tau w(0,\tau)=0
    \quad \mbox{for} \quad
    \tau \in (0,1).
  \end{equation}
  To this end,
  we observe that
  \begin{equation}
    \label{eq:5.49}
    \frac{1}{2} \le 
    \frac{F(\eta_1)-F(\eta_2)}{F_0(\eta_1)-F_0(\eta_2)}
    \le 2
    \quad \mbox{if} \quad
    \eta_1 \neq \eta_2.
  \end{equation}
  This is true because Cauchy's mean value theorem yields
  \begin{equation*}
    \frac{F(\eta_1)-F(\eta_2)}{F_0(\eta_1)-F_0(\eta_2)}
    =\frac{f_0(\eta_2+\theta (\eta_1-\eta_2))}{f(\eta_2+\theta (\eta_1-\eta_2))}
    \quad \mbox{for some} \quad \theta \in (0,1),
  \end{equation*}
  and \eqref{eq:5.14} shows that the right-hand side of this equality lies between $1/2$ and $2$.
  We define
  \begin{equation*}
    \tilde \delta_{i,\tau} :=
    \frac{F_0(M(t_i))-F_0(M(t_i+\lambda_i^2\tau))}{\lambda_i^2 \tau}
  \end{equation*}
  for $\tau \in (0,1)$.
  Then $\tilde \delta_{i,\tau}>0$ for large $i$, 
  since \eqref{eq:5.39} and \eqref{eq:5.40} give $M'(t)>0$ if $t$ is close to $T$.
  Moreover, from the definition of $\delta_i$ and \eqref{eq:5.49},
  we have $\tilde \delta_{i,\tau} \le 2\delta_i$.
  Hence by \eqref{eq:5.38} we infer that
  \begin{equation*}
    \lim_{i \to \infty} \tilde \delta_{i,\tau} =0.
  \end{equation*}
  We note that the definition of $w_i$ gives
  \begin{equation*}
    G_q(w_i(0,0))- G_q(w_i(0,\tau))
    =\frac{F_0(u(0,t_i))-F_0(u(0,t_i+\lambda_i^2\tau))}{\lambda_i^2}
    =\tau \tilde \delta_{i,\tau}.
  \end{equation*}
  Therefore by letting $i \to \infty$ we obtain
  \begin{equation*}
    G_q(w(0,0))- G_q(w(0,\tau))=0,
  \end{equation*}
  which results in \eqref{eq:5.48}.

  Finally we derive a contradiction.
  From \eqref{eq:5.47} and \eqref{eq:5.48},
  Lemma~\ref{lemma:5.2} can be applied.
  We hence have $\partial_\tau w=0$ in $\mathbb{R}^N \times (0,1)$.
  In particular, by \eqref{eq:5.47},
  we see that $\Phi:=W(\cdot,0)$ is a solution of $\Phi''+(N-1)\Phi'/r +g_q(W)=0$.
  In other words,
  $\Phi$ satisfies \eqref{eq:4.1} with $p=q/(q-1)$ if $q>1$ and \eqref{eq:4.2} if $q=1$.
  Since using \eqref{eq:1.7}, \eqref{eq:5.40}, \eqref{eq:5.45} and L'Hospital's rule give
  \begin{equation*}
    G_q(\Phi(0))=G_q(w(0,0))
    =\lim_{i \to \infty} G_q(w_i(0,0))
    =\lim_{i \to \infty} \frac{F_0(u(0, t_i))}{F(u(0, t_i))}
    =\lim_{i \to \infty} \frac{f(u(0, t_i))}{f_0(u(0, t_i))}
    =1,
  \end{equation*}
  $\Phi$ also satisfies
  \begin{equation}
    \notag 
    \Phi(0)
    =\left\{
    \begin{aligned} 
      &0
      &&\mbox{if}\quad q=1,
      \\
      &(q-1)^{q-1}
      &&\mbox{if}\quad q>1.
    \end{aligned}
    \right.
  \end{equation}
  Therefore, applying Propositions~\ref{proposition:4.3} and \ref{proposition:4.4},
  we have
  \begin{equation}
    \label{eq:5.50}
    Z_{(0,\infty)}[\Phi-\Phi_p^*]=\infty,
    \qquad
    p=\left\{
    \begin{aligned}
      &\frac{q}{q-1}
      &&\mbox{if} \quad q>1,
      \\
      &\infty
      &&\mbox{if} \quad q=1.
    \end{aligned}
    \right.
  \end{equation}
  We now recall that 
  \begin{equation*}
    \Phi_p^* (\rho)
    =G_q^{-1} 
    \left(
    \frac{\rho^2}{2N-4q}
    \right).
  \end{equation*}
  Hence by Proposition~\ref{proposition:3.1} we have
  \begin{equation*}
    G_q^{-1} 
    \left(
    \frac{F_0(U^*(\lambda_i \rho))}{\lambda_i^2}
    \right)
    =G_q^{-1} 
   \left(
    \frac{\rho^2 (1+\theta(\lambda_i \rho))}{2N-4q}
    \right)
    \to G_q^{-1} 
    \left(
    \frac{\rho^2}{2N-4q}
    \right)
    =\Phi_p^* (\rho) 
  \end{equation*}
  locally uniformly for $\rho>0$ as $i \to \infty$.
  By \eqref{eq:5.45},
  we also have
  \begin{equation*}
    G_q^{-1} 
    \left(
    \frac{F_0(U(\lambda_i \rho,t_i))}{\lambda_i^2}
    \right)
    =W_i(\rho,0)
    \to W(\rho,0)
    =\Phi(\rho) 
  \end{equation*}
  locally uniformly for $\rho>0$ as $i \to \infty$.
  Therefore for any $\rho_0>0$,
  \begin{align*}
    Z_{(0,\rho_0)}[\Phi-\Phi_p^*]
    &
    \le \liminf_{i \to \infty} Z_{(0,\rho_0)}
    \left[
    G_q^{-1} 
    \left( 
    \frac{F_0(U(\lambda_i \, \cdot,t_i))}{\lambda_i^2}
    \right)
    -G_q^{-1} 
    \left(
    \frac{F_0(U^*(\lambda_i \, \cdot))}{\lambda_i^2}
    \right)
    \right]
    \\
    &=
    \liminf_{i \to \infty} Z_{(0,\rho_0)}
    \left[
    U(\lambda_i \, \cdot,t_i)) -U^*(\lambda_i \, \cdot)
    \right],
  \end{align*}
  where we have used the fact that 
  \begin{gather*}
    Z_I[h] \le \liminf_{n \to \infty} Z_I[h_n]
  \end{gather*}
  for any $C^1$ function $h$ having only simple zeros on an interval $I$
  and any sequence $\{h_n\} \subset C(I)$ converging pointwise to $h$.
  This leads to a contradiction,
  because \eqref{eq:5.50} shows that 
  the left-hand side of this equality diverges to $\infty$ as $\rho_0 \to \infty$,
  while Lemma~\ref{lemma:5.1} shows that 
  the right-hand side does not exceed the constant $N_0$.
  Thus \eqref{eq:5.37} is false, and the proof is complete. 
\end{proof}

\bigskip 

\noindent
{\bf Acknowledgements.}
The first author was supported in part by JSPS KAKENHI Grant Number 23K03179.
The second author was partially supported by JSPS KAKENHI Grant Number 22H01131.



\end{document}